\tikzset{
  bigblue/.style={circle, draw=blue!80,fill=blue!40,thick, inner sep=1.5pt, minimum size=5mm},
  bigred/.style={circle, draw=red!80,fill=red!40,thick, inner sep=1.5pt, minimum size=5mm},
  bigblack/.style={circle, draw=black!100,fill=black!40,thick, inner sep=1.5pt, minimum size=5mm},
  bluevertex/.style={circle, draw=blue!100,fill=blue!100,thick, inner sep=0pt, minimum size=2mm},
  redvertex/.style={circle, draw=red!100,fill=red!100,thick, inner sep=0pt, minimum size=2mm},
  blackvertex/.style={circle, draw=black!100,fill=black!100,thick, inner sep=0pt, minimum size=1.5mm},  
  whitevertex/.style={circle, draw=black!100,fill=white!100,thick, inner sep=0pt, minimum size=2mm},  
  smallblack/.style={circle, draw=black!100,fill=black!100,thick, inner sep=0pt, minimum size=1mm},
  smallwhite/.style={circle, draw=black!100,fill=white!100,thick, inner sep=0pt, minimum size=1mm} 
}
\title[Sparse $4$-critical graphs have low circular chromatic number]{Sparse $4$-critical graphs have low circular chromatic number}
\author[Moore]{Benjamin Moore}
\address[Benjamin Moore]{Department of Combinatorics and Optimization, University of Waterloo, Waterloo, ON, Canada}  
\email{brmoore@uwaterloo.ca}
\date{}
\newtheorem{thm}[equation]{Theorem}
\newtheorem{lemma}[equation]{Lemma}
\newtheorem{prop}[equation]{Proposition}
\newtheorem{conj}[equation]{Conjecture}
\newtheorem{cor}[equation]{Corollary}
\newtheorem{claim}{Claim}
\newtheorem{question}[equation]{Question}
\newtheorem{definition}[equation]{Definition}
\newtheorem{obs}[equation]{Observation}
\theoremstyle{definition}
\newtheoremstyle{case}{}{}{\normalfont}{}{\itshape}{\normalfont:}{ }{}
\theoremstyle{case}
\numberwithin{equation}{section}
\date{}
\begin{document}

\begin{abstract}

Kostochka and Yancey proved that every $4$-critical graph $G$ has $e(G) \geq \frac{5v(G) - 2}{3}$, and that equality holds if and only if $G$ is $4$-Ore. We show that a question of Postle and Smith-Roberge implies that every $4$-critical graph with no $(7,2)$-circular-colouring has $e(G) \geq \frac{27v(G) -20}{15}$. Here, for integers $p$ and $q$ where $\frac{p}{q} \geq 2$, we say that $G$ admits a $(p,q)$-circular colouring if there is a map $f:V(G) \to \{0,\ldots,p-1\}$ such that for any edge $uv \in E(G)$,  $q \leq |f(u) -f(v)| \leq p-q$. We show that if this question is true, then it is best possible in the sense that for any integers $p$ and $q$, where $2 \leq \frac{p}{q} < \frac{7}{2}$, there exists a $4$-critical graph $G$ with no $(p,q)$-colouring that has $e(G) < \frac{27v(G)-20}{15}$. Towards the question, we prove that every $4$-critical graph $G$ with no $(7,2)$-colouring has $e(G) \geq \frac{17v(G)}{10}$ except for $K_{4}$ and the wheel on six vertices. A consequence of this is that the complement of every $4$-critical graph $G$ with $e(G) < \frac{17v(G)}{10}$ aside from $K_{4}$ and the wheel on six vertices has a Hamiltonian cycle. 
 
We prove additional structural statements about $4$-critical graphs with no $(7,2)$-colouring. Let $G$ be a $4$-critical graph with no $(7,2)$-colouring, and let $D_{3}(G)$ be the subgraph induced by the vertices of degree $3$ in $G$. We prove that every connected component of $D_{3}(G)$ is isomorphic to either a path, a claw, or an odd cycle. In the event $D_{3}(G)$ contains an odd cycle, we show that $G$ is isomorphic to an odd wheel. In fact, we show that for any  $k$-critical graph $G$ with $k \geq 4$, that if the subgraph induced by the vertices of degree $k-1$ contains a $K_{k-1}$, then either $G$ is isomorphic to $K_{k}$, or $G$ admits a $(2k-1,2)$-colouring. 

Lastly, we construct examples of $4$-critical graphs with no $(7,2)$-colouring where $D_{3}(G)$ has components isomorphic to either a claw or arbitrarily long paths. 

\end{abstract}

\maketitle

\section{Introduction}

The topic of this paper is $4$-critical graphs. A graph $G$ is \textit{$k$-critical} if the chromatic number of $G$ is $k$, but every proper subgraph has chromatic number $k-1$. 

A remarkable result of Alexandr Kostochka and Matthew Yancey
is that $k$-critical graphs have a ``large" number of edges. More precisely, they proved:

\begin{thm}[\cite{oresconjecture}]
\label{KostochkaYancey}
Let $k \geq	 4$ and let $G$ be a $k$-critical graph. Then 
\[e(G) \geq \lceil \frac{(k+1)(k-2)v(G) - k(k-3)}{2(k-1)} \rceil.\]
\end{thm}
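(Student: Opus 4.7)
The plan is to reformulate the inequality via a potential function. Set
\[\phi(G) \;=\; (k+1)(k-2)\,v(G) \;-\; 2(k-1)\,e(G) \;-\; k(k-3),\]
so that the claimed bound is equivalent to $\phi(G)\leq 0$ for every $k$-critical $G$ (the ceiling follows because $e(G)$ is an integer). I would proceed by induction on $v(G)$. The base case is $G=K_k$, the unique $k$-critical graph on $k$ vertices, where a direct computation yields $\phi(K_k)=k\bigl[(k+1)(k-2)-(k-1)^2-(k-3)\bigr]=0$.

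For the inductive step, fix $k\geq 4$ and assume $G$ is a $k$-critical counterexample with $\phi(G)>0$ minimising $v(G)$. I would first assemble the classical structural information available for $k$-critical graphs: every vertex has degree at least $k-1$; the graph $G-e$ is $(k-1)$-colourable for every edge $e$; and, by a theorem of Gallai, the subgraph $D(G)$ induced by the vertices of degree exactly $k-1$ is a ``Gallai forest'', i.e.\ each block of $D(G)$ is either an odd cycle or a complete graph. These facts sharply restrict where low-degree vertices can sit in a minimal counterexample and are exactly what is needed to locate a useful reduction.

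The core of the argument is a decomposition step. I would aim to show that $G$ either (i) admits a small vertex cut $S$ along which one can split $G$ into two smaller $k$-critical graphs $G_1,G_2$ (after possibly adding a bounded number of edges within $S$) with $v(G_1)+v(G_2)\leq v(G)+|S|$ and $e(G_1)+e(G_2)\geq e(G)+\binom{|S|}{2}$; or (ii) contains a pair of non-adjacent vertices $u,v$ at distance two whose identification yields a graph that still contains a proper $k$-critical subgraph $G'$ on strictly fewer vertices. In either case, applying the induction hypothesis to the smaller pieces ($\phi(G_i)\leq 0$, respectively $\phi(G')\leq 0$) and performing a bookkeeping computation on how $\phi$ changes under the corresponding operation should contradict $\phi(G)>0$.

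The hardest part, I expect, is case (ii): in the absence of a useful small separator one must produce a non-adjacent pair $u,v$ whose identification is \emph{safe}, meaning that any $(k-1)$-colouring of the identified graph pulls back to a $(k-1)$-colouring of $G-uv$, and hence of $G$, contradicting $k$-criticality. This demands a careful Kempe chain argument inside the Gallai forest $D(G)$, exploiting the rigidity of its odd-cycle and complete-graph blocks, together with a precise accounting of how many vertices and edges disappear under identification so that the potential inequality still goes the right way. Kostochka and Yancey handle this via an intricate potential-balancing argument on a carefully chosen independent set of vertices of degree $k-1$, and I would expect to follow that route.
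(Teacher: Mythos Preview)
The paper does not contain a proof of this statement at all: Theorem~\ref{KostochkaYancey} is quoted from \cite{oresconjecture} as a known result and is used as a black box throughout. There is therefore nothing in the paper to compare your proposal against.

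As to the proposal itself viewed as a sketch of the Kostochka--Yancey argument: you have the right potential function and the right base case, and the Gallai forest structure is indeed relevant. However, your dichotomy (i)/(ii) does not match their actual reduction. Their key step is not a two-vertex identification or a small-cut split in the classical sense; rather, for a proper subset $R\subsetneq V(G)$ of low potential they take a $(k-1)$-colouring of $G[R]$, collapse $R$ to at most $k-1$ new vertices (one per colour class used), and show the resulting smaller graph still fails to be $(k-1)$-colourable, hence contains a $k$-critical subgraph to which the induction applies. The delicate part is a sequence of potential inequalities bounding $\rho(R)$ from below for every proper $R$ in a minimal counterexample, and this is what drives the discharging-type endgame. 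Your case (ii) --- identifying a single pair of non-adjacent vertices and invoking Kempe chains in the Gallai forest --- is closer in spirit to older, weaker density bounds (Gallai, Krivelevich) and would not by itself yield the sharp constant $(k+1)(k-2)/(2(k-1))$. If you want to reconstruct the actual proof, the colour-class-collapse reduction is the piece you are missing.
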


Here, we are using the notation that $e(G) = |E(G)|$ and $v(G) = |V(G)|$. We will use this notation throughout the article. Later, they strengthened their theorem by characterizing when equality holds:

\begin{thm}[\cite{tightnessore}]
\label{Orebound}
Equality holds in Theorem \ref{KostochkaYancey} if and only if $G$ is $k$-Ore. 
\end{thm}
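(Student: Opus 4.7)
The plan is to prove the two directions of the equivalence separately. The \emph{if} direction --- that every $k$-Ore graph achieves equality in Theorem \ref{KostochkaYancey} --- is a straightforward induction on the Ore construction. The base case $G = K_k$ is a direct computation: $e(K_k) = \binom{k}{2}$, and one checks algebraically that $\lceil \frac{(k+1)(k-2)k - k(k-3)}{2(k-1)} \rceil = \binom{k}{2}$. For the inductive step, let $G$ be the Ore composition of two smaller $k$-Ore graphs $G_1$ and $G_2$: one removes an edge $xy$ from $G_1$, splits a vertex $z \in V(G_2)$ into two vertices $z_1, z_2$ whose neighbourhoods partition $N_{G_2}(z)$, and identifies $x = z_1$ and $y = z_2$. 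Then $v(G) = v(G_1) + v(G_2) - 1$ and $e(G) = e(G_1) + e(G_2) - 1$, and the inductive hypothesis combined with a short algebraic manipulation (which exactly cancels the $-k(k-3)$ constants from the two pieces into the single constant for $G$) delivers equality for $G$.

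For the converse (\emph{only if}) direction, the natural approach is to mirror the proof of Theorem \ref{KostochkaYancey} itself and track the equality case throughout. Define the potential
\[
\phi(G) := (k+1)(k-2)v(G) - 2(k-1)e(G) - k(k-3),
\]
so that Theorem \ref{KostochkaYancey} asserts $\phi(G) \leq 0$ for every $k$-critical graph $G$, and the goal is to show that $\phi(G) = 0$ forces $G$ to be $k$-Ore. Suppose for contradiction that $G$ is a $k$-critical graph with $\phi(G) = 0$ that is not $k$-Ore, chosen minimum in $v(G)$; in particular, $G \neq K_k$. I would use $k$-criticality --- which guarantees a proper $(k-1)$-colouring of $G - e$ for every edge $e$ --- together with Gallai-type statements about the low-vertex subgraph, to extract a proper nonempty vertex subset $R \subsetneq V(G)$ along which $G$ separates cleanly into two smaller ``pieces'', one containing (a contraction of) $R$ and one containing (a contraction of) $V(G) \setminus R$.

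The main obstacle is then showing that these two pieces can be reassembled as $k$-critical graphs $G_1, G_2$ each satisfying $\phi(G_i) = 0$. Once this is achieved, minimality of $G$ forces each $G_i$ to be $k$-Ore, and the reassembly exhibits $G$ as their Ore composition, contradicting the choice of $G$. Controlling the separation carefully enough to guarantee simultaneously that (i) both pieces are genuinely $k$-critical (not just $k$-chromatic), (ii) the vertex and edge counts across the two pieces add up so as to saturate the bound on each side (no slack can be absorbed into either piece, or else $\phi(G) < 0$ would follow), and (iii) the gluing map is literally an Ore composition (an edge deletion on one side glued to a vertex split on the other), is the technical heart of the argument and where I expect the bulk of the work to lie.
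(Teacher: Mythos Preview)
This theorem is not proved in the paper at all: it is stated with a citation to \cite{tightnessore} (Kostochka and Yancey's companion paper) and used as a black box. There is therefore no ``paper's own proof'' against which to compare your proposal.

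On the substance of your sketch: the \emph{if} direction is correct and complete as written. The \emph{only if} direction, however, is a plan rather than a proof, and the plan as stated has a real gap. You propose to find a separation of $G$ into two smaller $k$-critical pieces each with $\phi = 0$, but you give no mechanism for producing such a separation. In Kostochka and Yancey's actual argument this is exactly the hard part: one does \emph{not} get a clean vertex cut for free. Instead, the potential method is run on \emph{all} subgraphs (not just $G$ itself), and the key structural input is that any proper subset $R$ with low potential allows one to replace $G[R]$ by a small gadget, colour the result by induction, and extend --- unless the colouring of the gadget forces a specific configuration that can be identified with an Ore composition. Your sketch assumes the existence of the right separation rather than deriving it, and the phrase ``Gallai-type statements about the low-vertex subgraph'' does not supply what is needed here; the Gallai tree controls the degree-$(k-1)$ vertices but does not by itself produce the two-piece decomposition with zero slack on both sides. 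So the outline is in the right spirit, but step (i)--(iii) is precisely where all the content lives, and that content is absent.
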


Here, a graph is \textit{$k$-Ore} if it can be obtained from repeated Ore compositions of $K_{k}$. An \textit{Ore composition} of two graphs $G_{1}$ and $G_{2}$ is the graph $G$ obtained by deleting an edge $xy \in E(G_{1})$, taking a vertex $z$ in $V(G_{2})$, splitting it into two vertices $z'$ and $z''$ of positive degree, and then identifying $z'$ with $x$ and $z''$ with $y$. It follows that there are infinitely many $k$-Ore graphs, and hence Theorem \ref{KostochkaYancey} is tight infinitely often. These two theorems are useful at proving colouring results on sparse graphs. A particularly nice application is an exceptionally short proof of Gr{\"o}tzsch’s Theorem - that triangle-free planar graphs are $3$-colourable \cite{Shortproof} (see \cite{moreapplications} for more nice applications). 

A limitation of Theorem \ref{KostochkaYancey} arises from the following situation. We have a class of graphs which are sparse but have more edges than the bound given in Theorem \ref{KostochkaYancey} for some value of $k$, but we would still like to $k$-colour these graphs. Then unfortunately without further arguments Theorem \ref{KostochkaYancey} is not particularly useful. One way to get around this would be to identify some structural properties of the class of graphs, and argue that all $k$-critical graphs containing that structural property are more dense than we can guarantee from Theorem \ref{KostochkaYancey}. 

There are numerous results of this flavour. For instance, $k$-Ore graphs contain large cliques. Luke Postle conjectured that if a $k$-critical graph has no large cliques with respect to $k$, then the $k$-critical graph is more dense than expected.

\begin{conj}[\cite{postletrianglefree5crit}]
For every $k \geq 4$, there exists $\varepsilon_{k} > 0$ such that if $G$ is $k$-critical and does not contain a $K_{k-2}$ subgraph, then 
\[e(G) \geq (\frac{k}{2}- \frac{1}{k-1}+ \varepsilon_{k})v(G) - \frac{k(k-3)}{2(k-1)}\].
\end{conj}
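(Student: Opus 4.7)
The natural approach is to extend the potential method that Kostochka and Yancey used to prove Theorems \ref{KostochkaYancey} and \ref{Orebound}. Writing
\[\rho_k(G) := 2(k-1)e(G) - (k+1)(k-2)v(G) + k(k-3),\]
Theorem \ref{KostochkaYancey} is the assertion $\rho_k(G) \geq 0$ for every $k$-critical $G$, and Theorem \ref{Orebound} says equality holds exactly for $k$-Ore graphs. The conjecture is equivalent to $\rho_k(G) \geq 2(k-1)\varepsilon_k v(G)$ whenever $G$ is $k$-critical and $K_{k-2}$-free, and I would attempt this by induction on $v(G)$: take a vertex-minimum counterexample $G$ and hunt for a reducible configuration whose reduction produces a smaller $k$-critical graph $G'$ that is still $K_{k-2}$-free, with the induction hypothesis applied to $G'$ more than compensating for the potential change $\rho_k(G) - \rho_k(G')$.

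The key structural step is a dichotomy in the spirit of the reducibility analyses in \cite{oresconjecture,tightnessore}: every low-degree vertex of $G$ either lies in a small substructure that can be reduced to a smaller $K_{k-2}$-free $k$-critical graph, or its neighborhood forces $G$ to look locally like a $k$-Ore graph. If the latter alternative holds at every vertex, one should be able to argue that $G$ admits a global Ore decomposition and thus contains a copy of $K_{k-2}$ (the conjecture is vacuous for $k=4$, so we may assume $k \geq 5$, where every $k$-Ore graph contains $K_{k-1}$). Hence a positive fraction of vertices must lie in reducible configurations, each contributing a constant to the deficit $\rho_k(G)$.

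The main obstacle is amplifying this qualitative information into a linear gain. Because Ore compositions produce an infinite family saturating the Kostochka--Yancey bound, simply knowing $G$ is non-Ore yields only $\rho_k(G) \geq 1$, which is a constant rather than a linear improvement. The natural route is to pair the potential method with a discharging argument: assign each vertex an initial charge based on degree, and use the $K_{k-2}$-free hypothesis to certify redistribution rules under which every vertex ends with a uniformly positive deficit, ensuring that reducible configurations cannot overlap too densely. The delicate point is designing the reducible family and the discharging rules so that savings at a reducible configuration are not double-counted against the potential change when it is reduced; this is where $K_{k-2}$-freeness must do genuine work, as it is precisely what rules out the dense reducible obstructions arising in the classical proofs. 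Postle's bound in \cite{postletrianglefree5crit} covers $k=5$, but extending to $k \geq 6$ appears substantially harder, since reducible configurations tailored to forbidding $K_{k-2}$ for large $k$ are much more subtle than the triangle-free case, and identification steps in a reduction become increasingly prone to inadvertently creating a new $K_{k-2}$.
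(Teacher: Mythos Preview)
The statement you are attempting to prove is a \emph{conjecture}, not a theorem: the paper attributes it to Postle \cite{postletrianglefree5crit} and explicitly records its status as open in general, with only the cases $k=5$, $k=6$, and $k \geq 33$ settled (by Postle, Gao--Postle, and Larsen respectively). There is therefore no proof in the paper to compare your proposal against.

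More to the point, what you have written is not a proof but a research outline, and you acknowledge this yourself. Your text identifies the potential function $\rho_k$, states what would need to be shown, and then lists the obstacles: you note that knowing $G$ is non-Ore yields only $\rho_k(G) \geq 1$ rather than a linear gain, that designing reducible configurations and discharging rules compatible with $K_{k-2}$-freeness is ``delicate,'' and that the cases $k \geq 6$ appear ``substantially harder.'' None of these difficulties is resolved in your proposal. The central missing idea is precisely the one you flag: a concrete family of reducible configurations for general $k$, together with a verification that reductions preserve $K_{k-2}$-freeness and that the potential gain per reduction aggregates to a linear improvement. Without that, the sketch does not advance beyond restating why the conjecture is plausible and why it is hard.
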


The conjecture does not have any content $k=4$. The $k =5$ case was proven by Postle \cite{postletrianglefree5crit}. The $k=6$ was proven by Wenbo Gao and Postle \cite{6critk4free}, and the $k \geq 33$ was proven by Victor Larsen in his thesis \cite{largekthesis}. In the case where $k=4$, the question becomes interesting if you replace having no $K_{2}$-subgraph with having girth $5$, and in this case progress has been made in the constant term by Chun-Hung Lui and Postle \cite{4criticalgirth5} (while not available online, Postle claims to have made an improvement in the density in this case as well). 
 Tom Kelly and Postle investigated the density of critical graphs without large cliques \cite{kelly2019density} and showed you can obtain improvements to the density in this situation.

This paper deals with a similar style of problem, but with a less straightforward structural condition than simply having no large clique (or large girth). We impose a no-homomorphism condition on our $4$-critical graphs, which informally says that not only can our $4$-critical graph not be $3$-coloured, but it can not be coloured even given three and a half colours. We now make this notion precise.

 Given two graphs $G$ and $H$, a \textit{graph homomorphism} is an adjacency preserving map from $G$ to $H$. That is, a map $f:V(G) \to V(H)$ such that for every edge $uv \in E(G)$, we have $f(u)f(v) \in E(H)$. We write $G \to H$ if $G$ admits a homomorphism to $H$. It is easily seen that a graph $G$ has a $k$-colouring if and only if $G$ has a homomorphism to $K_{k}$. Hence homomorphisms generalize colouring. 
 
  An interesting class of homomorphism targets that refines complete graphs are the \textit{circular cliques}. Let $p$ and $q$ be positive integers such that $\frac{p}{q} \geq 2$. Then we say the \textit{$(p,q)$-circular-clique}, denoted $G_{p,q}$, has vertices $\{0,1,2,\ldots,p-1\}$ and an edge $ij$ if $q \leq |i-j| \leq p-q$. We say $G$ admits an \textit{$(p,q)$-circular-colouring} if $G$ admits a homomorphism to $G_{p,q}$. As there will be no confusion, we will refer to $(p,q)$-circular-colourings as $(p,q)$-colourings. It is easy to see that $K_{k}$ is isomorphic to $G_{k,1}$, and that the odd cycle on $2k+1$ vertices, $C_{2k+1}$, is isomorphic to $G_{2k+1,k}$.
We refer the reader to the following survey of circular colouring by Xuding Zhu for an comprehensive overview of circular colouring \cite{XudingSurvey1}. For ease, we will always use the labelling of $G_{p,q}$ given above. This gives rise to a different labelling than the obvious standard labelling one would use when it comes to odd cycles, however it is easier to use this labelling for circular colouring. 

Now we can introduce homomorphism critical graphs. For a fixed graph $H$, we will say that a graph $G$ is \textit{$H$-critical} if $G$ does not admit a homomorphism to $H$, but all proper subgraphs do. In \cite{c5criticalgraphs}, Zden{\v e}k Dvo{\v r}{\' a}k and Postle investigated $C_{5}$-critical graphs, and proved
 \begin{thm}[\cite{c5criticalgraphs}]
If $G$ is $C_5$-critical and not $K_{3}$, then $e(G) \geq \frac{5v(G)-2}{4}$.
 \end{thm}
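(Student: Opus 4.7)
The proof plan is to adapt the potential-method of Kostochka and Yancey (which underlies Theorem~\ref{KostochkaYancey}) to the homomorphism setting. Define the potential $\rho(G) := 5v(G) - 4e(G)$; then the claim $e(G) \geq \frac{5v(G)-2}{4}$ is equivalent to $\rho(G) \leq 2$. Note $\rho(K_3) = 3$, which explains the stated exception. I would argue by induction on $v(G)+e(G)$, supposing for contradiction that $G$ is a minimum counterexample: a $C_5$-critical graph distinct from $K_3$ with $\rho(G) \geq 3$.

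First I would collect basic structural consequences of $C_5$-criticality. By criticality, every proper subgraph admits a homomorphism to $C_5$, and deleting any edge produces a $C_5$-colorable graph. A vertex of degree at most one allows a trivial extension, so $\delta(G) \geq 2$. A degree-$2$ vertex $v$ with neighbors $u_1,u_2$ is more delicate: any homomorphism $f$ of $G-v$ to $C_5$ extends if and only if $f(u_1),f(u_2)$ have a common neighbor in $C_5$, which (since $C_5$ is triangle-free) happens exactly when $f(u_1),f(u_2)$ are equal or at distance two in $C_5$. This yields ``list-type'' restrictions on colorings of $G-v$ that can be exploited in reductions. An analogous analysis would be performed for short paths of low-degree vertices and for small cycles.

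The core of the proof is identifying reducible configurations. Typical candidates are a short path of degree-$2$ vertices, two adjacent degree-$3$ vertices, or a subgraph with small edge-boundary. For each configuration $H$, the strategy is to delete or contract $H$ to a smaller graph $G'$, pass to a $C_5$-critical subgraph $G^\star \subseteq G'$ (which satisfies the inductive bound, so in particular $\rho(G^\star) \leq 2$ unless $G^\star = K_3$, which must be separately ruled out), and then extend the resulting homomorphism back across $H$ to obtain a $C_5$-coloring of $G$, contradicting its criticality. The main obstacle I anticipate is that $C_5$ is a far stiffer target than $K_3$: precoloring a single edge of $C_5$ leaves only a handful of options on its neighbors, and on a triangle it leaves none. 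So establishing extendability requires exploiting the fact that for every edge $e$ incident to $H$ the graph $G-e$ has a $C_5$-coloring, and averaging or exchanging over the resulting family of colorings to find one that is compatible with $H$.

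Finally, once $G$ has been shown to contain none of the reducible configurations, a direct counting (or discharging) argument should close the proof. Assigning initial charge $5 - 2\deg(v)$ to each vertex yields total charge $5v(G) - 4e(G) = \rho(G) \geq 3$; the structural restrictions on $G$ would then force a redistribution giving total charge at most $2$, a contradiction. The tight constant $2$ on the right-hand side of the stated bound is dictated by the most restrictive admissible configuration, so pinning down exactly which small subgraphs are reducible, and carefully verifying extendability against the rigidity of $C_5$, is where I expect the bulk of the difficulty to lie.
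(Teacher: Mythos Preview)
The paper does not contain a proof of this statement. The theorem is quoted from Dvo\v{r}\'ak and Postle \cite{c5criticalgraphs} as background and motivation for Conjecture~\ref{C5conjecture} and Question~\ref{bigquestion}; no argument for it appears anywhere in the present paper. Consequently there is nothing here to compare your proposal against.

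As for the proposal itself: what you have written is a plausible outline of the potential-method strategy, and indeed the original Dvo\v{r}\'ak--Postle argument is of this general type. But it is only an outline. You have not specified a single concrete reducible configuration, nor verified for any configuration that a $C_5$-colouring of the reduced graph actually extends back, nor shown how the discharging or counting step closes. The sentence ``the structural restrictions on $G$ would then force a redistribution giving total charge at most $2$'' is a hope, not an argument: you have not stated any structural restrictions beyond $\delta(G)\ge 2$, and that alone is far from enough. The genuine work in such proofs lies precisely in the steps you have deferred --- identifying the right reductions, handling the case where the reduced graph collapses to $K_3$, and carrying out the extension arguments against the rigidity of $C_5$ --- and none of that work is present here.
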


They conjectured the following bound:

\begin{conj}
\label{C5conjecture}
If $G$ is $C_{5}$-critical, then $e(G) \geq  \frac{14v(G) -9}{11}$.
\end{conj}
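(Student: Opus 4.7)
The plan is to adapt the potential method of Kostochka and Yancey, in the form developed by Dvo\v{r}\'ak and Postle for homomorphism-critical graphs. Define the potential $\rho(H) = 14\,v(H) - 11\,e(H)$ for any graph $H$; the conjecture is equivalent to $\rho(G) \leq 9$ for every $C_5$-critical graph $G$. Let $G$ be a counterexample of minimum order, so $\rho(G) \geq 10$ and every proper subgraph of $G$ admits a $(5,2)$-colouring.

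I would begin by recording the standard structural facts for $C_5$-critical $G$: minimum degree at least $2$ (a degree-$1$ vertex extends any colouring of $G-v$), $2$-connectedness, and elementary restrictions on how degree-$2$ vertices can meet. The key driver of the local analysis is that for a degree-$2$ vertex $u$ with neighbours $u_1,u_2$, a $(5,2)$-colouring of $G-u$ fails to extend iff the pair $(f(u_1),f(u_2))$ lies in a very small obstruction set in $C_5 \times C_5$; cataloguing these obstructions yields strong restrictions on the configurations induced by low-degree vertices.

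The bulk of the work is identifying the precise family of reducible configurations needed to push the density bound from $\frac{5v-2}{4}$ up to $\frac{14v-9}{11}$. Concretely I would (i) enumerate short subgraphs built from degree-$2$ and degree-$3$ vertices (chains, ``theta''-like pieces, small subdivisions) whose contraction produces a smaller graph $G'$ with $\rho(G') \leq \rho(G) - \delta$ for some fixed $\delta > 0$, and (ii) for each, prove a $(5,2)$-colouring extension lemma showing that every colouring of $G'$ lifts to $G$, contradicting criticality. The list of configurations must be calibrated so that, once all are forbidden, a discharging step with initial charge $\deg(v) - \tfrac{28}{11}$ at each $v$ and transfers from high-degree vertices to chains of low-degree vertices yields nonnegative total charge, contradicting $\rho(G) \geq 10$. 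Fixing the initial charge at $\tfrac{28}{11}$ and the constants $14,11,9$ to match the conjectured extremal structure is the delicate calibration.

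The main obstacle is step (ii). Unlike ordinary $k$-colouring, the constraint $2 \leq |f(u)-f(v)| \leq 3$ for $(5,2)$-colourings forbids two colours at each endpoint per edge, so lists shrink rapidly and the case analysis over partial colourings of a configuration's boundary is heavier. There is also a genuine trade-off: larger configurations kill more potential but carry exponentially more boundary colourings to verify by hand. Locating the sweet spot where the extension lemmas go through and the discharging closes with the precise constants $14$, $11$, and $9$ is where the proof will succeed or fail; I expect that the extremal $C_5$-critical graphs at $\frac{5v-2}{4}$ identified in \cite{c5criticalgraphs} (or short variants thereof) will be the family whose only non-sparse instances must be explicitly enumerated as exceptions, analogously to how Theorem~\ref{Orebound} characterizes equality by the $k$-Ore family.
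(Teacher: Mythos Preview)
The statement you are attempting to prove is labelled a \emph{conjecture} in the paper; the paper does not prove it and does not claim to. So there is no paper proof to compare your proposal against. What you have written is not a proof but a programme: you set up the potential $\rho(H) = 14v(H) - 11e(H)$, posit a minimum counterexample, and then defer all of the substantive work to an unspecified list of reducible configurations and an uncalibrated discharging argument. You explicitly acknowledge this yourself (``Locating the sweet spot \ldots\ is where the proof will succeed or fail'').

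For this to become a proof you would need to actually produce the configuration list and verify the extension lemmas and the discharging balance. That is exactly the open content of the conjecture. The approach you sketch is the natural one and is the same framework used in \cite{c5criticalgraphs} to obtain the weaker bound $e(G) \geq \frac{5v(G)-2}{4}$, but pushing the constants from $(5,4,2)$ to $(14,11,9)$ is not a matter of tweaking parameters: it requires substantially more reducible configurations, and nobody has yet found a set that closes the argument. Until you exhibit such a set and carry out the case analysis, the proposal remains a restatement of the problem rather than a solution.
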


Further, they observed using a standard Hell-Ne{\v s}et{\v r}il indicator construction (see \cite{Hcol}) that if this conjecture were true, it would generalize the $k=6$ case of Theorem \ref{KostochkaYancey}. To see this, consider any $6$-critical graph $G$ and construct the graph $G \ast P_4$ by taking every edge $uv$ of $G$, deleting it, and replacing the edge with a path on $4$ vertices (identifying the endpoints of the path with $u$ and $v$). It is easy to see that $G \ast P_4$ is $C_5$-critical. Further if $G$ is $6$-Ore, then the graph $G \ast P_4$ has exactly $\frac{14v(G) -9}{11}$ edges. As there are $C_{5}$-critical graphs that do not arise from this construction (for instance, $K_{3}$, or the construction in \cite{MACGILLIVRAY} can be used to find many such examples), the conjecture is a strengthening of the bound in Theorem \ref{KostochkaYancey}.

Later, Postle and Smith-Roberge asked the natural generalization of Conjecture \ref{C5conjecture}.

\begin{question}[\cite{EvelyneMasters}]
\label{bigquestion}
Is it true that if $G$ is $C_{2t+1}$-critical, for $t \geq 2$, then
\[e(G) \geq \frac{t(2t+3)v(G) - (t+1)(2t-1)}{2t^{2}+2t-1}?\]
\end{question}

By a similar construction as above, if Question \ref{bigquestion} is true for some $t$, then this generalizes Theorem \ref{KostochkaYancey} in the $k = 2t+2$ case. Towards the question when $t=3$, they proved:

\begin{thm}[\cite{EvelyneMasters}]
\label{C7critical}
Let $G$ be a $C_{7}$-critical graph. If $G$ is not $C_{3}$ or $C_{5}$, then 
\[e(G) \geq \frac{17v(G)-2}{15}\]
\end{thm}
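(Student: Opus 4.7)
The plan is to prove this by contradiction via a standard potential/discharging argument tailored to $C_7$-critical graphs. Define the potential $\phi(G) = 17 v(G) - 15 e(G)$; the goal is to prove $\phi(G) \leq 2$ for every $C_7$-critical graph $G$ other than $C_3$ and $C_5$. Take $G$ to be a counterexample with $\phi(G) > 2$, minimizing $v(G)$. Since $G$ is $C_7$-critical, for every edge $e$ and every vertex $v$, $G-e$ and $G-v$ admit homomorphisms to $C_7$, and minimality of $G$ lets me apply the bound to smaller $C_7$-critical subgraphs that might arise from reductions.

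First I would establish global structural features of $G$: it is $2$-connected and has minimum degree at least $2$ (a leaf contradicts criticality, and a cut-vertex lets one glue homomorphisms of the blocks). Next I would analyze \emph{threads}, i.e. maximal paths whose internal vertices have degree exactly $2$. The essential reduction is that internal length is bounded. Viewing $C_7$ as a cycle $0$--$1$--$\cdots$--$6$--$0$, a direct computation shows that for every pair $(a,b) \in V(C_7)^2$, walks in $C_7$ from $a$ to $b$ of every length $\ell \geq 6$ exist. Consequently, if a thread has $\geq 5$ internal degree-$2$ vertices, then any homomorphism of $G$ with the thread deleted (guaranteed by criticality) extends to $G$, a contradiction. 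Finer case analysis on the image pairs that do \emph{not} admit extending walks of small length will give stronger restrictions: for each $1 \leq \ell \leq 5$ I would enumerate the pairs $(a,b)$ for which no $(a,b)$-walk of length $\ell$ exists, and combine this with criticality to limit how many short threads can be incident to a single high-degree vertex.

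With the local picture pinned down, I would proceed by discharging. Assign to each vertex $v$ the initial charge $\mu(v) = \deg(v) - \tfrac{34}{15}$; our assumption $\phi(G) > 2$ gives $\sum_v \mu(v) < -\tfrac{4}{15}$. The rules would have vertices of degree $\geq 3$ push a fixed amount of charge (some rational of denominator $15$) along each incident thread to the adjacent degree-$2$ vertices; the exact constants are forced by wanting degree-$2$ vertices, which start with charge $-\tfrac{4}{15}$, to finish non-negative. Using the thread bounds from the previous step, I would verify that every vertex ends with non-negative final charge, producing the contradiction $\sum_v \mu(v) \geq 0$ and hence $\phi(G) \leq 2$.

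The hard part is the structural analysis of threads and of configurations at low-degree vertices. Unlike the $C_5$ case, where parity of walks is the dominant obstruction, in $C_7$ walks abound and the real obstruction is a finer combinatorial condition on the image pair $(a,b)$ together with $\ell$. To get the constant $\tfrac{17}{15}$ exactly, one needs the structural constraints to be sharp: for each ``small'' configuration of degree-$2$ vertices surrounding a high-degree vertex, one must identify a specific proper subgraph $H \subsetneq G$ and a specific homomorphism $H \to C_7$ that extends to $G$, contradicting criticality. The exceptional graphs $C_3$ and $C_5$ will emerge as precisely the configurations where every such attempted reduction fails, which is what justifies excluding them from the statement.
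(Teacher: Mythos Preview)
The paper does not actually prove Theorem~\ref{C7critical}; it is quoted verbatim from \cite{EvelyneMasters} and used only as background (and, via the indicator construction in Section~\ref{indicators}, to derive the corollary $e(G)\geq (17v(G)-2)/11$ for $G_{7,2}$-critical graphs). So there is no proof in this paper to compare your proposal against.

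That said, your outline is broadly the right \emph{kind} of argument for a result of this form: defining the potential $\phi(G)=17v(G)-15e(G)$, bounding thread length via walk-reachability in $C_7$ (your computation that length-$6$ walks connect every pair of vertices of $C_7$ is correct), and finishing with discharging. But as written it is only a sketch, not a proof. Two substantive pieces are missing. First, thread-length bounds alone are far too weak to make the discharging work at the constant $\tfrac{34}{15}$: a degree-$3$ vertex incident to three threads of four internal degree-$2$ vertices each would need to donate far more charge than it has, so you need genuine \emph{reducible configurations} (specific local neighbourhoods whose presence lets you extend a $C_7$-colouring of a smaller graph), not just the single ``long thread'' reduction. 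Second, you never invoke the potential-method machinery that such proofs actually rely on: when you delete a configuration and add gadget edges/identifications, the resulting graph need not be $C_7$-critical, so to apply induction you must pass to a critical subgraph and control its potential relative to $G$; this is where the additive constant $-2$ and the exceptional graphs $C_3$, $C_5$ come from, and your sketch does not touch it. If you want to see how these steps are carried out, consult \cite{EvelyneMasters} directly.
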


Interestingly, the $t=3$ case not only would generalize the $k=6$ case of Theorem \ref{KostochkaYancey}, it also implies bounds on $G_{7,2}$-critical graphs. In particular,

\begin{obs}
\label{indicatorobservation}
If the $t=3$ case of Question \ref{bigquestion} is true, then every $G_{7,2}$-critical graph $G$ satisfies
\[e(G) \geq \frac{27v(G)-20}{15}.\]
\end{obs}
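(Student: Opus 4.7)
The plan is to carry out a Hell-Ne{\v s}et{\v r}il-style indicator construction analogous to the $C_5$ reduction referenced in the excerpt. For a graph $G$, let $G \ast P_4$ denote the graph obtained by replacing each edge $uv \in E(G)$ with an internally disjoint path of length three, so $v(G \ast P_4) = v(G) + 2 e(G)$ and $e(G \ast P_4) = 3 e(G)$. I will show that if $G$ is $G_{7,2}$-critical, then $G \ast P_4$ is $C_{7}$-critical, at which point the hypothesized bound of Question \ref{bigquestion} at $t=3$ applied to $G \ast P_4$ yields the desired inequality on $e(G)$ after routine algebra.

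The heart of the argument is the equivalence $G \to G_{7,2}$ if and only if $G \ast P_4 \to C_{7}$. The observation that drives this is that multiplication by $2$ modulo $7$ is a bijection on $\mathbb{Z}_7$ carrying the difference set $\{\pm 2, \pm 3\}$ (which governs adjacency in $G_{7,2}$) to $\{\pm 4, \pm 6\} = \{\pm 1, \pm 3\} \pmod 7$. On the other hand, two vertices of $C_{7}$ are joined by a walk of length three precisely when their difference modulo $7$ lies in $\{\pm 1, \pm 3\}$. Consequently, from a $(7,2)$-colouring $f$ of $G$ one obtains a $C_{7}$-colouring of $G \ast P_4$ by setting $g(u) = 2 f(u) \bmod 7$ for $u \in V(G)$ and extending $g$ through each $P_4$-gadget via a length-three walk; conversely, a $C_{7}$-colouring of $G \ast P_4$ restricted to $V(G)$ and multiplied by $4 \equiv 2^{-1} \pmod 7$ gives a $(7,2)$-colouring of $G$.

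With the equivalence established, $G \ast P_4$ admits no homomorphism to $C_{7}$. For criticality, consider any edge $e$ of $G \ast P_4$; it lies in the $P_4$-gadget of some edge $uv \in E(G)$. Since $G$ is $G_{7,2}$-critical, $G - uv$ has a $(7,2)$-colouring, and the equivalence converts this into a $C_{7}$-colouring of $(G - uv) \ast P_4$. The broken gadget $P_4 - e$ decomposes into paths of length at most two, so the two internal vertices of the $uv$-gadget can be coloured greedily in $C_{7}$ to complete a $C_{7}$-colouring of $(G \ast P_4) - e$. Hence $G \ast P_4$ is $C_{7}$-critical.

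Finally, applying Question \ref{bigquestion} with $t = 3$ to the $C_{7}$-critical graph $G \ast P_4$ gives
\[ 3 e(G) \;=\; e(G \ast P_4) \;\geq\; \frac{27\, v(G \ast P_4) - 20}{23} \;=\; \frac{27 (v(G) + 2 e(G)) - 20}{23}, \]
which rearranges to $e(G) \geq \frac{27 v(G) - 20}{15}$. The only genuinely content-bearing step is the equivalence in the second paragraph, i.e.\ the arithmetic observation that $P_4$ is the correct indicator gadget for $G_{7,2}$-colourings; the rest is standard indicator-construction bookkeeping.
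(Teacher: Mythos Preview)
Your proof is correct and follows essentially the same route as the paper: replace each edge by a $P_4$, show that $G \ast P_4$ is $C_7$-critical whenever $G$ is $G_{7,2}$-critical, and then apply the $t=3$ bound of Question~\ref{bigquestion} with the same algebra. The only notable difference is in how you verify that $P_4$ is the right indicator: the paper checks by hand, for each adjacency in $G_{7,2}$, that a length-three walk in $C_7$ exists with the required endpoints (and that no such walk exists for the non-adjacencies), whereas you observe uniformly that multiplication by $2$ on $\mathbb{Z}_7$ carries the $G_{7,2}$ difference set $\{\pm 2,\pm 3\}$ to $\{\pm 1,\pm 3\}$, which is exactly the set of differences reachable by a walk of length three in $C_7$. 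Your verification is cleaner and makes transparent why $P_4$ works; otherwise the two arguments coincide.
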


We prove this later. As we observe in a moment, every $4$-critical graph that has no $(7,2)$-colouring is $G_{7,2}$-critical. Thus the $t=3$ case of Question \ref{bigquestion} would imply that sparse $4$-critical graphs have low circular chromatic number. Recall the useful and easy fact that if $\frac{p}{q} \leq \frac{p'}{q'}$, then $G_{p,q} \to G_{p',q'}$.

\begin{obs}
Let $p$ and $q$ be integers such that $3 \leq \frac{p}{q} < 4$. Any $4$-critical graph with no $(p,q)$-colouring is $G_{p,q}$-critical. 
\end{obs}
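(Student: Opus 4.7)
The plan is very short: this observation follows almost immediately from the hypothesis that $G$ is $4$-critical together with the cited monotonicity fact that $G_{p,q} \to G_{p',q'}$ whenever $\tfrac{p}{q} \le \tfrac{p'}{q'}$.

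First I would unpack the definition of $G_{p,q}$-critical: a graph $G$ is $G_{p,q}$-critical provided (i) $G$ admits no homomorphism to $G_{p,q}$, and (ii) every proper subgraph of $G$ admits such a homomorphism. Condition (i) is precisely the hypothesis that $G$ has no $(p,q)$-colouring, so there is nothing to check there. Hence the only real task is condition (ii).

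To verify (ii), let $H$ be an arbitrary proper subgraph of $G$. Since $G$ is $4$-critical, $H$ has chromatic number at most $3$, so $H$ admits a homomorphism to $K_{3}$; equivalently, $H \to G_{3,1}$. The hypothesis $3 \leq \tfrac{p}{q}$ gives $\tfrac{3}{1} \leq \tfrac{p}{q}$, and so the monotonicity fact recalled just before the observation yields $G_{3,1} \to G_{p,q}$. Composing the two homomorphisms gives $H \to G_{p,q}$, as required.

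There is no real obstacle here; the only thing worth flagging is the role of the upper bound $\tfrac{p}{q} < 4$, which does not appear in the argument itself. Its purpose is to ensure the hypothesis is not vacuous: if $\tfrac{p}{q} \geq 4$ then $K_{4} \to G_{p,q}$, so every $4$-colourable graph (and in particular every $4$-critical graph) would already admit a $(p,q)$-colouring. I would add a single sentence at the end of the proof to record this remark, so that the reader sees exactly why the interval $3 \leq \tfrac{p}{q} < 4$ is the meaningful range for the statement.
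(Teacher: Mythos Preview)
Your proof is correct and follows essentially the same approach as the paper: the paper also uses $4$-criticality to get a $3$-colouring of any proper subgraph (it phrases this in terms of edge deletions $G-e$, which is equivalent), then composes with the homomorphism $K_{3}\to G_{p,q}$ coming from $3\le \tfrac{p}{q}$. Your added remark explaining the role of the upper bound $\tfrac{p}{q}<4$ is a nice clarification that the paper omits.
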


\begin{proof}
Let $G$ be such a graph. By the assumption $G$ has no $(p,q)$-colouring. By $4$-criticality, for any edge $e \in E(G)$, $G-e \to K_{3}$, and $K_{3} \to G_{p,q}$ as $3 \leq \frac{p}{q}$. As homomorphisms compose, $G-e \to G_{p,q}$, and hence $G-e$ has a $(p,q)$-circular colouring. Therefore $G$ is $G_{p,q}$-critical. 
\end{proof}

We will show that the bound in Observation \ref{indicatorobservation} is sharp with respect to the choice of $\frac{7}{2}$.  

\begin{obs}
\label{sharpness}
For any integers $p$ and $q$ satisfying $3 \leq \frac{p}{q} < \frac{7}{2}$, there is a $4$-critical graph with no $(p,q)$-colouring but satisfies
\[e(G) < \frac{27v(G) -20}{15}.\] 
\end{obs}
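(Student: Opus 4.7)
The plan is to show that the Moser spindle $M$, obtained as the Ore composition of two copies of $K_{4}$, serves as a counterexample for every integer pair $(p,q)$ with $3 \leq p/q < 7/2$. Since $M$ is $4$-Ore, it is $4$-critical by Theorem \ref{Orebound}. It has $v(M)=7$ and $e(M)=11$, so $e(M) = 11 < \frac{169}{15} = \frac{27v(M)-20}{15}$, giving the desired strict edge inequality.

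For the non-colourability, label $V(M) = \{a,b,c,d,e,f,g\}$ so that $\{a,b,c,d\}$ induces $K_{4}-ab$, $\{b,e,f,g\}$ induces $K_{4}-be$, and $ae \in E(M)$. The core combinatorial claim is a diamond lemma: in any $(p,q)$-colouring $\varphi$ of $K_{4}-xy$ (where $xy$ is the missing edge), if $\alpha_{1}=\varphi(x)$ and $\alpha_{2}=\varphi(y)$, then $|\alpha_{1}-\alpha_{2}|_{c} \leq p-3q$, where $|\cdot|_{c}$ denotes circular distance modulo $p$. To see this, note that the two remaining vertices $u,v$ of the diamond are common neighbours of $x$ and $y$ and are adjacent to each other, so their colours must lie in $N(\alpha_{1}) \cap N(\alpha_{2})$ at mutual circular distance at least $q$, where $N(\alpha) = \{c : q \leq |c-\alpha|_{c} \leq p-q\}$. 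Since $p/q < 4$, the circular distance $|\alpha_{1}-\alpha_{2}|_{c} \leq \lfloor p/2 \rfloor \leq 2q-1$, so the two forbidden arcs of radius $q-1$ around $\alpha_{1},\alpha_{2}$ overlap or touch. Their complement is therefore a single arc of length $p - |\alpha_{1}-\alpha_{2}|_{c} - 2q + 1$, which contains two points at circular distance at least $q$ if and only if its length is at least $q+1$, equivalently $|\alpha_{1}-\alpha_{2}|_{c} \leq p-3q$.

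Suppose now $\varphi$ is a $(p,q)$-colouring of $M$ with $p/q < 7/2$. Without loss of generality $\varphi(a)=0$; set $\beta = \varphi(b)$ and $\sigma = \varphi(e)$. Applying the diamond lemma to $\{a,b,c,d\}$ gives $|\beta|_{c} \leq p-3q$; applying it to $\{b,e,f,g\}$ gives $|\beta-\sigma|_{c} \leq p-3q$; and the edge $ae$ forces $|\sigma|_{c} \geq q$. The triangle inequality for circular distance then yields
\[ q \leq |\sigma|_{c} \leq |\beta|_{c} + |\beta-\sigma|_{c} \leq 2(p-3q), \]
which rearranges to $p/q \geq 7/2$, contradicting the assumption. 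The main obstacle is the careful verification of the diamond lemma, i.e.\ confirming the overlap of the forbidden arcs throughout the range $3 \leq p/q < 7/2$ (which reduces to $p < 4q$ via a floor computation) and correctly counting the size of their complement. Once that combinatorial description is in place, the three bounds combine in just a couple of lines to give the result.
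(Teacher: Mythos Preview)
Your proof is correct and uses the same witness graph (the Moser spindle) as the paper, but your argument for why $M$ has no $(p,q)$-colouring when $p/q < 7/2$ is genuinely different and more direct. The paper instead establishes $\chi_c(M) = 7/2$ by exhibiting an explicit $(7,2)$-colouring and then invoking (implicitly) the standard fact that $\chi_c(G)$ is always realised by a fraction $p/q$ with $p \leq v(G)$; since no fraction with numerator at most $6$ lies strictly between $3$ and $7/2$, the conclusion follows. Your diamond lemma avoids this black box entirely: the inequality $|\varphi(x)-\varphi(y)|_c \leq p-3q$ for the non-edge of a $K_4-xy$, applied twice and combined with the edge $ae$ via the triangle inequality for circular distance, forces $q \leq 2(p-3q)$ and hence $p/q \geq 7/2$ directly. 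This is more self-contained and makes transparent \emph{why} $7/2$ is the threshold for the Moser spindle. One minor quibble: citing Theorem \ref{Orebound} for the $4$-criticality of $M$ is slightly circular, since that theorem is phrased as a characterisation of equality among $k$-critical graphs; the $4$-criticality of $4$-Ore graphs is the more elementary (and logically prior) fact that Ore compositions preserve criticality.
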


We use the notation $W_{n}$ to denote the wheel on $n$ vertices. This is the graph obtained by taking a cycle $C_{n-1}$ and adding a vertex $h$ adjacent to all vertices in $C_{n-1}$. We will say a wheel is \textit{odd} if the number of vertices in $W_{n}$ is even. The \textit{claw} is the unique tree on four vertices with a vertex of degree $3$.  For any graph $G$, we will let $D_{k}(G)$ denote the subgraph induced by the vertices of degree $k$.  Now we can state the main result of the paper.
\begin{thm}
\label{maintheorem}
Let $G$ be a $4$-critical graph that does not have a $(7,2)$-colouring and is not isomorphic to $K_{4}$ or $W_{6}$.
Then 
\[e(G) \geq \frac{17v(G)}{10}.\]
\end{thm}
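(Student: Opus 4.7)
My plan is a potential-function and discharging argument modelled on Kostochka--Yancey, refined by the no-$(7,2)$-colouring hypothesis. Set $\rho(H) := 17\,v(H) - 10\,e(H)$; the goal is to show $\rho(G) \le 0$ for every $4$-critical $G$ with no $(7,2)$-colouring and $G \notin \{K_4, W_6\}$. I would argue by contradiction: suppose $G$ is a vertex-minimum counterexample. Theorem~\ref{KostochkaYancey} gives the baseline $e(G) \ge (5v(G)-2)/3$, which already forces the average degree close to $10/3$ and implies that a positive fraction of vertices lie in $D_3(G)$; the task is to squeeze the extra slack out of the $(7,2)$-colouring obstruction.

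Before discharging I would prove the structural lemmas about $D_3(G)$ advertised in the abstract: every component of $D_3(G)$ is a path, a claw, or an odd cycle, and if an odd cycle appears then $G$ is an odd wheel. Both statements come from analyzing how a $(7,2)$-colouring of a proper subgraph (which exists by criticality) could be extended. Since $e(W_{2k}) = 4k-2 \ge 17(2k)/10$ for $k \ge 4$, and $W_6$ is excluded by hypothesis, I may assume no component of $D_3(G)$ is an odd cycle, so every component is either a path or a claw. In addition, standard criticality arguments rule out degree-$2$ vertices and let me control how degree-$3$ vertices meet vertices of degree $\ge 4$.

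I then set up discharging. Assign each $v$ the initial charge $\mu(v) := 5d(v) - 17$, so the total charge is $10e(G) - 17v(G) = -\rho(G) < 0$. The aim is to design local rules that push charge from vertices of degree $\ge 4$ toward vertices of degree $3$, so that every vertex finishes with non-negative charge, contradicting the negative total. A natural rule is that each vertex of degree $\ge 4$ sends a fixed amount along each edge to a degree-$3$ neighbour, calibrated so that degree-$4$ vertices finish non-negatively and a degree-$3$ vertex with all three neighbours of degree $\ge 4$ recovers its deficit of $2$. Interior vertices of a path in $D_3(G)$ have two degree-$3$ neighbours, but their third neighbour is higher-degree and covers the deficit, while path endpoints accumulate surplus that can be relayed inward along the path. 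The odd-wheel reduction already guarantees that no component of $D_3(G)$ is a closed loop of entirely deficient vertices.

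The main obstacle, and the heart of the argument, is the claw centre: a degree-$3$ vertex whose three neighbours all lie in $D_3(G)$, so no direct charge arrives from outside. I would handle this by a secondary rule in which each claw leaf relays part of the charge it receives from its two degree-$\ge 4$ neighbours to the centre, combined with a reducibility analysis that pins down how the leaves of a claw meet the rest of $G$. The tight configurations -- claws and long paths in $D_3(G)$ whose external high-degree neighbours are all of degree exactly $4$ -- then have to be excluded by exhibiting a $(7,2)$-extension: one precolours $G$ minus the configuration using criticality and shows, by case-splitting on the forbidden colour lists of size at most $2$ imposed on each low-degree vertex by its high-degree neighbours in $G_{7,2}$, that the precolouring always extends, contradicting $G$'s lack of a $(7,2)$-colouring. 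This extension analysis is the most delicate step; once it succeeds, the discharging accounts for every vertex of every component of $D_3(G)$, yielding $\sum_v \mu(v) \ge 0$ and the desired contradiction.
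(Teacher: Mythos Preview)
Your overall architecture---structural lemmas on $D_3(G)$, then discharging---matches the paper, and your potential $\rho$ is just a rescaling of the paper's scheme (the paper assigns charge $\deg(v)$ and targets $3.4$ per vertex). But two of your concrete choices diverge from what the paper actually does, and in each case your version is underspecified in a way that hides the real work.

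\textbf{Claws.} You propose to handle the claw centre by relaying charge from the leaves and then excluding tight claw configurations via a $(7,2)$-extension argument. The paper does something cleaner: it proves (Lemma~\ref{noclaw}) that if $C$ is a claw component with centre $y$ and leaves $x,z,y'$, then identifying all four vertices to one produces a strictly smaller $4$-critical graph with no $(7,2)$-colouring, with $e(G')=e(G)-6$ and $v(G')=v(G)-3$. Since $6 > \tfrac{17}{10}\cdot 3$, minimality kills claws outright; no discharging on claws is ever needed. Your relay-plus-extension plan might be made to work, but you have not identified the structural fact (from Lemma~\ref{reconfiguringpathsoflength3}) that the leaves $x,z$ must share a common non-claw neighbour and that $y'$ is adjacent to the other neighbours of $x$ and $z$---this is exactly what makes the identification well-behaved, and it is also what you would need to make the charge relay balance.

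\textbf{Paths.} Your fixed-amount rule cannot simultaneously satisfy ``degree-$4$ vertices finish non-negative'' and ``a single degree-$\ge 4$ neighbour covers the deficit $2$ of a path-interior vertex'': the former forces the amount $\le 3/4$, the latter forces it $\ge 2$. You patch this with a relay-inward rule, but the paper avoids relaying entirely. Its rule is that a vertex $v$ of degree $\ge 4$ sends $\frac{\deg(v)-3.4}{\deg_3(v)}$ to each degree-$3$ neighbour, and the reason path interiors receive enough is a structural fact you do not mention: Corollary~\ref{alternatingpath} shows that for any path component $P=v_0,\ldots,v_n$ with $n\ge 2$, there exist two vertices $u,w$ such that every $v_i$ has its non-path neighbour equal to $u$ or $w$ (alternating along the bipartition of $P$). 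Thus $u$ and $w$ have degree roughly $|P|/2$, and the division-by-$\deg_3$ rule makes them send large charge. The paper then does a separate, quite delicate, case analysis for path lengths $0,1,2,3,4$ (Lemmas~\ref{pathoflength3kempe}--\ref{pathoflength5kempe}), using Kempe-style recolouring to force some nearby vertex to have degree $\ge 5$ or an extra degree-$\ge 4$ vertex in the second neighbourhood. This short-path analysis is the genuine bottleneck and is not captured by your outline.
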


It follows immediately that the complement of all $4$-critical graphs with few edges have a Hamiltonian cycle. Recall, the complement of a graph $G$ is the graph $\bar{G}$ on the same vertex set, where $uv \in E(\bar{G})$ if and only if $uv \not \in E(G)$. We give a proof of the following observation that copies the same idea as a more general statement in \cite{complements}. 

\begin{obs}[\cite{complements}]
\label{hamcycle}
If a $4$-critical graph $G$ admits a $(7,2)$-colouring, then $\bar{G}$ contains a Hamiltonian cycle. 
\end{obs}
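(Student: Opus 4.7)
The plan is to use the $(7,2)$-colouring to exhibit an explicit Hamiltonian cycle in $\bar{G}$. Let $f : V(G) \to \{0, 1, \ldots, 6\}$ be a $(7,2)$-colouring, and set $V_i = f^{-1}(i)$ for $i \in \{0, \ldots, 6\}$, interpreting indices modulo $7$. Two structural facts about $\bar{G}$ would be recorded first: within each $V_i$ the vertices are pairwise non-adjacent in $G$, so $V_i$ induces a clique in $\bar{G}$; and for any $u \in V_i$ and $v \in V_{i+1}$ one has $|f(u) - f(v)| \in \{1, 6\}$, which violates $2 \leq |f(u) - f(v)| \leq 5$, so $uv \notin E(G)$ and hence $V_i$ and $V_{i+1}$ span a complete bipartite subgraph of $\bar{G}$.

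With these facts in hand, the Hamiltonian cycle practically writes itself: order the vertices of each $V_i$ arbitrarily and concatenate the orderings in the cyclic sequence $V_0, V_1, \ldots, V_6$, closing up by joining the last listed vertex of $V_6$ to the first listed vertex of $V_0$. Every consecutive pair along this cycle lies either inside a common $V_i$ (an edge of the clique in $\bar{G}$) or across adjacent classes $V_i, V_{i+1}$ (an edge of the complete bipartite subgraph in $\bar{G}$), so the entire cycle is contained in $\bar{G}$.

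The one step requiring genuine content is verifying that every $V_i$ is non-empty; otherwise the construction would have to bridge across an empty class using an edge at circular distance $2$, which is not guaranteed to lie in $\bar{G}$. I would handle this by arguing: if some $V_j$ is empty, then $f$ factors through the induced subgraph $H$ of $G_{7,2}$ on the remaining six colours. By the rotational symmetry of $G_{7,2}$ one may assume $j = 6$, and then the partition $\{0,1\}, \{2,3\}, \{4,5\}$ is a proper $3$-colouring of $H$, since each listed pair is at distance $1$ and so is a non-edge of $G_{7,2}$. Composing this $3$-colouring with $f$ would yield a proper $3$-colouring of $G$, contradicting the fact that $G$ is $4$-critical and hence has $\chi(G) = 4$. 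Thus all seven classes are non-empty, and the cyclic concatenation produces the desired Hamiltonian cycle in $\bar{G}$.
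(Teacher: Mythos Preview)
Your proof is correct and follows essentially the same approach as the paper's: both use the seven colour classes of the $(7,2)$-colouring, observe that each class is a clique in $\bar{G}$ and that consecutive classes (mod $7$) span complete bipartite graphs in $\bar{G}$, argue that all classes are non-empty because otherwise $G$ would be $3$-colourable, and then read off a Hamiltonian cycle. Your version is in fact more explicit than the paper's, which defers the non-emptiness step to a reference and leaves the actual cycle construction to the reader; your direct partition $\{0,1\},\{2,3\},\{4,5\}$ when colour $6$ is missing is exactly the content of the lower-parents fact the paper invokes elsewhere.
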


\begin{proof}
Let $f$ be a $(7,2)$-colouring of $G$. Observe that $f$ uses all seven colours, as otherwise we can recolour $f$ to a $3$-colouring of $G$, contradicting the fact that $G$ is $4$-critical (see \cite{HomBook} for justification if you cannot convince yourself of this fact). Let $C_{0},C_{1},\ldots,C_{6}$ be the colour classes of $f$. By the above observation no colour class is empty. Then for any vertex $v \in V(C_{i})$, there is no edge from $v$ to any vertex $u \in V(C_{i+1})$ for all $i \in \{0,\ldots,6\}$ (indices taken modulo $7$). Then in the complement, there are edges between all vertices in each colour class, and all edges between colour classes $C_{i}$ and $C_{i+1}$. It follows one can find a Hamiltonian cycle. 
\end{proof}

\begin{cor}
If $G$ is $4$-critical, $G$ is not isomorphic to $K_{4}$ or $W_{6}$, and $e(G) < \frac{17v(G)}{10}$, then $\bar{G}$ contains a Hamiltonian cycle. 
\end{cor}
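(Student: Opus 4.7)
The plan is essentially to combine the two immediately preceding results. Given a $4$-critical graph $G$ that is not isomorphic to $K_4$ or $W_6$ and satisfies $e(G) < \frac{17v(G)}{10}$, the contrapositive of Theorem \ref{maintheorem} forces $G$ to admit a $(7,2)$-colouring (otherwise it would have at least $\frac{17v(G)}{10}$ edges, contradicting the hypothesis on $e(G)$).

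Once a $(7,2)$-colouring is in hand, Observation \ref{hamcycle} applies directly to produce a Hamiltonian cycle in $\bar{G}$. The only thing to double-check is that the exclusion of $K_4$ and $W_6$ in the hypothesis matches the exclusion in Theorem \ref{maintheorem}, which it does verbatim, so no additional case analysis is needed for those two exceptional graphs.

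There is no real obstacle here: the corollary is a one-line deduction and requires no new ideas beyond chaining the two statements. If any subtlety arises, it would be purely in confirming that the $(7,2)$-colouring produced is genuinely a proper homomorphism to $G_{7,2}$ (so that the colour-class structure used in the proof of Observation \ref{hamcycle} applies), but this is immediate from the definitions. So the proof reduces to a single sentence invoking Theorem \ref{maintheorem} and Observation \ref{hamcycle} in sequence.
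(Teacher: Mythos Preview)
Your proposal is correct and matches the paper's proof exactly: apply Theorem~\ref{maintheorem} (in contrapositive form) to obtain a $(7,2)$-colouring, then invoke Observation~\ref{hamcycle} to get the Hamiltonian cycle in $\bar{G}$. The paper's proof is literally two sentences to this effect, and your commentary about the exceptional graphs and the homomorphism structure is accurate but unnecessary for the argument.
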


\begin{proof}
By Theorem \ref{maintheorem}, $G$ has a $(7,2)$-colouring. By Observation \ref{hamcycle}, $\bar{G}$ has a Hamiltonian cycle. 
\end{proof}

Aside from Theorem \ref{maintheorem}, we prove some structural results about the types of graphs which can appear in $D_{3}(G)$. 

\begin{thm}
\label{GallaiTreestructure}
Let $G$ be a $4$-critical graph that does not have a $(7,2)$-colouring. Then either $G$ is isomorphic to an odd wheel, or every component of $D_{3}(G)$ is isomorphic to a path or a claw. Further there are $4$-critical graphs with no $(7,2)$-colouring that have components of $D_{3}(G)$ isomorphic to either a claw, or arbitrarily long paths. 
\end{thm}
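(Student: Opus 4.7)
The plan is to combine Gallai's classical structure theorem with targeted reducibility arguments that exploit the explicit shape of $G_{7,2}$ as the complement of $C_7$. Applying Gallai's theorem to the $4$-critical graph $G$, every block of $D_3(G)$ is $K_2$, $K_3$, or an odd cycle of length at least $5$. The lemma announced in the abstract---that a $K_3$ inside $D_3$ of a $4$-critical graph forces $G \cong K_4$ unless $G$ admits a $(7,2)$-colouring---removes the $K_3$-block case entirely under our hypotheses, and $K_4 = W_4$ is itself an odd wheel.

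For odd-cycle blocks of length at least $5$, let $C = v_0 v_1 \cdots v_{2k}$ be such a block and let $u_i$ denote the unique neighbour of $v_i$ outside $C$. Since $G$ is $G_{7,2}$-critical (by the observation preceding the theorem), $G - V(C)$ admits a $(7,2)$-colouring $f$, which induces at each $v_i$ a four-element list $L(v_i) = \{c \in \mathbb{Z}_7 : 2 \le |c - f(u_i)| \le 5\}$; the subgraph of $G_{7,2}$ induced by any such list is a copy of $P_4$, hence bipartite. Consequently, no odd cycle admits a homomorphism into a single list, so the obstruction to extending $f$ across $C$ can occur only when all $L(v_i)$ coincide, i.e.\ when all $u_i$ equal a common vertex $h$. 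I would then show that any mismatch among the $u_i$ permits (after possibly modifying $f$ locally) a list-homomorphism from $C_{2k+1}$ to $G_{7,2}$, yielding a $(7,2)$-colouring of $G$ and contradicting criticality. This forces $\{u_0,\ldots,u_{2k}\} = \{h\}$, so $G$ contains the wheel on $V(C) \cup \{h\}$. To rule out extra vertices, note that the same list argument shows the wheel $W_{2k+2}$ has no $(7,2)$-colouring; hence for any $w \in V(G) \setminus (V(C)\cup\{h\})$, the graph $G - w$ contains this wheel and fails to be $(7,2)$-colourable, contradicting criticality.

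The remaining case is that every component of $D_3(G)$ is a subcubic tree. I would show that any such component which is not a path or a claw contains a reducible configuration---either a ``Y''-spider (a degree-$3$ vertex with at least one leg of length $\geq 2$) or a pair of adjacent degree-$3$ vertices. The reduction deletes the configuration, invokes $(7,2)$-criticality on the complement, and extends by a finite case analysis of which triples of colours in $\mathbb{Z}_7$ leave a common admissible colour for each removed vertex---essentially a ``bad-triple'' analysis for lists of size four in $G_{7,2}$.

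For the construction part, explicit examples whose $D_3$-components include a claw, or arbitrarily long paths, can be built by combining copies of $W_6$ along shared degree-$3$ vertices or by subdividing hub spokes in controlled ways; $(7,2)$-uncolourability is verified by direct colour-class counting (as in Observation~\ref{hamcycle}), and $4$-criticality by producing $3$-colourings of each edge-deletion. The main obstacle will be the list-circular-colouring analysis underlying the odd-cycle and tree cases: one must pin down which families of $4$-element arc-complements in $\mathbb{Z}_7$ admit homomorphisms from odd cycles and from small subcubic trees, and must carefully choose the initial $(7,2)$-colouring of the outside graph to engineer compatible lists. This finite but delicate case analysis is the technical heart of the proof.
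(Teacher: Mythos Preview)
Your odd-cycle analysis is essentially the paper's argument, with one caveat: the paper works with a \emph{3-colouring} of $G-V(C)$ (from $4$-criticality, using colours $\{0,2,4\}$), not a general $(7,2)$-colouring from $G_{7,2}$-criticality. This matters at the reconfiguration step: uniform lists only say all $f(u_i)$ agree, not that the $u_i$ coincide. The paper breaks this by recolouring one $u_i$ from $0$ to $6$, which is legal precisely because every neighbour of $u_i$ in $G-V(C)$ is coloured $2$ or $4$. Starting from an arbitrary $(7,2)$-colouring you would not have this freedom, so your sentence ``after possibly modifying $f$ locally'' hides a genuine issue. Your argument that extra vertices are impossible (via criticality and the wheel subgraph) is fine and slightly different from the paper's implicit $2$-connectivity argument.

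The acyclic case is where your proposal has a real gap. Listing ``a pair of adjacent degree-$3$ vertices'' as a reducible configuration cannot be right, since every nontrivial path component contains one. The paper's engine here is a specific structural lemma: given any $P_3 = xyz$ inside $D_3(G)$, a sequence of reconfigurations of a $3$-colouring of $G-\{x,y,z\}$ (recolouring whole sets like $N_4(N_2(x'))$, not just single vertices) forces that $x$ and $z$ share an outside neighbour $x'=z'$, and that the outside neighbour $y'$ of $y$ is adjacent to the remaining outside neighbours $x''$ of $x$ and $z''$ of $z$. This is not a static list/bad-triple computation; the reconfiguration is what pins down the adjacencies. Once you have this, if $y$ has three degree-$3$ neighbours $x,z,y'$, the forced edges $y'x''$, $y'z''$ and the identification $x'=z'$ create $4$-cycles in $D_3(G)$ unless $x',x'',z''$ all have degree $\geq 4$, so the component is exactly the claw $\{x,y,z,y'\}$. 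Your plan does not isolate this lemma, and a pure list-extension argument will not recover it.

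For the constructions your proposal is also a gap. ``Gluing copies of $W_6$'' or ``subdividing hub spokes'' are not operations that obviously preserve both $4$-criticality and the absence of a $(7,2)$-colouring, and you give no verification. The paper instead introduces a single explicit operation, the \emph{$C_6$-expansion}: delete a degree-$3$ vertex $v$ with neighbours $x,y,z$ and add four vertices $x',y',z',w$ with $x'$ adjacent to $y,z,w$, $y'$ adjacent to $x,z,w$, and $z'$ adjacent to $x,y,w$. A short case analysis shows this preserves $4$-criticality and non-$(7,2)$-colourability; applying it once to $K_4$ yields a claw component, and applying it to a degree-$3$ vertex of $W_{t+5}$ yields a path component of length $t$.
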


We can generalize the ideas of part of Theorem \ref{GallaiTreestructure} to $k$-critical graphs. 

\begin{thm}
\label{kcriticality}
Let $G$ be a $k$-critical graph that does not have a $(2k-1,2)$-colouring. Then $D_{k-1}(G)$ does not contain a clique of size $k-1$.  
\end{thm}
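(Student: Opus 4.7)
My plan is to prove the sharper statement made explicit in the abstract: if $G$ is $k$-critical and $D_{k-1}(G)$ contains a $K_{k-1}$, then either $G\cong K_k$ or $G$ admits a $(2k-1,2)$-colouring. Theorem~\ref{kcriticality} follows by reading it as implicitly excluding the case $G\cong K_k$, since $K_k$ itself has no $(2k-1,2)$-colouring (placing $k$ points on $C_{2k-1}$ with every gap $\geq 2$ would require total gap $\geq 2k>2k-1$, so $K_k\not\subseteq G_{2k-1,2}$).

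Let $K=\{v_1,\dots,v_{k-1}\}$ induce the $K_{k-1}$ inside $D_{k-1}(G)$. Each $v_i$ has $k-2$ neighbours in $K$ and hence a unique external neighbour $u_i\in V(G)\setminus K$. First assume all $u_i$ are equal to a common vertex $u$. Then $K\cup\{u\}$ induces a $K_k$ and the whole neighbourhood of each $v_i$ lies inside this set, so any vertex of $V(G)\setminus (K\cup\{u\})$ could only reach the rest of $G$ through $u$, making $u$ a cutvertex. Because $k$-critical graphs are $2$-connected, we conclude $V(G)=K\cup\{u\}$ and $G\cong K_k$.

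So assume WLOG $u_1\ne u_2$. I construct a $(2k-1,2)$-colouring $\psi\colon V(G)\to\mathbb{Z}/(2k-1)$ as a blow-up. Put $\psi(v_i)=2(i-1)$ for each $i$; these $k-1$ even positions on $C_{2k-1}$ have consecutive gaps $2,2,\dots,2,3$ and are pairwise at circular distance $\geq 2$. For $v\in V(G)\setminus K$, set $\psi(v)=2\phi(v)$, where $\phi$ is a proper $(k-1)$-colouring of $G-K$ (such a $\phi$ exists because $G-K\subseteq G-v_1$ and $G-v_1$ is $(k-1)$-colourable by $k$-criticality of $G$). Edges inside $K$ and inside $G-K$ are properly $(2k-1,2)$-coloured by construction. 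For an edge $v_iu_i$, the two endpoints sit at even positions $2(i-1)$ and $2\phi(u_i)$, so the circular distance is $0$ exactly when $\phi(u_i)=i-1$ and is at least $2$ otherwise. The construction therefore produces a valid $(2k-1,2)$-colouring of $G$ provided $\phi$ satisfies $\phi(u_i)\ne i-1$ for every $i$, so the theorem reduces to the following \textbf{Key Lemma}: $G-K$ admits a proper $(k-1)$-colouring $\phi$ with $\phi(u_i)\ne i-1$ for each $i\in\{1,\dots,k-1\}$.

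I would prove the Key Lemma by a Kempe-chain argument. Begin with any proper $(k-1)$-colouring $\phi_0$ of $G-K$, and for each violation $\phi_0(u_i)=i-1$ perform an $(i-1,c)$-Kempe swap on the component of $u_i$ with $c$ chosen so as not to introduce a new violation at any $u_j$ that gets flipped in the swap. Phrased as a list-colouring problem on $G-K$, each $u_i$ has list $L(u_i)=\{0,\dots,k-2\}\setminus\{i-1\}$, and when several $u_i$'s coincide at a common vertex the relevant list is the intersection; every such list is nonempty precisely because we are not in the all-equal case. The \textbf{main obstacle} is controlling the cascade: a Kempe swap for $u_i$ may flip the colour of another $u_j$, and naive iteration can cycle. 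The crux of the argument is to show that the failure of a terminating repair sequence would force the identification pattern of $u_1,\ldots,u_{k-1}$ back into the collapsed all-equal pattern --- a Hall-type matching argument on the admissible-colour bipartite graph together with the structural rigidity coming from every $v_i$ having degree exactly $k-1$ in the $k$-critical graph $G$.
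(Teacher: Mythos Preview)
Your Key Lemma is false, and the gap is not a matter of controlling the Kempe cascade. Observe that if a proper $(k-1)$-colouring $\phi$ of $G-K$ satisfied $\phi(u_i)\ne i-1$ for every $i$, then your $\psi$ would be a $(2k-1,2)$-colouring of $G$ using only the $k-1$ even colours $\{0,2,\dots,2(k-2)\}$; dividing by $2$ would then yield a proper $(k-1)$-colouring of $G$, contradicting $\chi(G)=k$. More structurally: by the standard list-colouring fact for cliques (a $(k-2)$-list assignment of $K_{k-1}$ is choosable unless all lists coincide --- this is Lemma~\ref{colouringlemma} in the paper), any $(k-1)$-colouring of $G-K$ in which the $u_i$ do not all receive the same colour would extend to a proper $(k-1)$-colouring of $G$. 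Hence \emph{every} proper $(k-1)$-colouring $\phi$ of $G-K$ has $\phi(u_1)=\cdots=\phi(u_{k-1})=c$ for some fixed $c\in\{0,\dots,k-2\}$, and your constraints $c\ne 0,\ c\ne 1,\ \dots,\ c\ne k-2$ are simultaneously unsatisfiable. No Kempe swap, relabelling of the $v_i$, or Hall-type matching rescues this, because the obstruction is present in every colouring, not just the starting one. Your intuition that failure of the repair would force the all-equal identification pattern is exactly backwards: we are already assuming $u_1\ne u_2$, and yet the \emph{colours} still collapse.

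The paper's proof begins from precisely this observation --- that every $(k-1)$-colouring of $G-K$ makes all $u_i$ the same colour --- and then breaks the uniformity by recolouring a single $u_i$ from $0$ to $2k{-}2$ (its circular neighbour in $\mathbb{Z}/(2k{-}1)$). This is legal on $G-K$ and, crucially, introduces a colour outside the even palette. The lists of available colours on $K$ are now \emph{near}-uniform rather than uniform, and a short inductive lemma (Lemma~\ref{listcolouring}) completes the $(2k-1,2)$-colouring of $K$. The step your argument is missing is exactly this departure from the $(k-1)$-colour palette; without it, any successful extension would contradict $k$-criticality.
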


In light of the above, we make the following strong conjecture.

\begin{conj}
Let $p$ and $q$ be integers where $3 < \frac{p}{q} < 4$. Let $G$ be a $4$-critical graph with no $(p,q)$-colouring. Then there exists positive rational numbers $\varepsilon_{p,q}$ and $c_{p,q}$ depending on $p$ and $q$ such that
\[e(G) \geq \frac{(5 + \varepsilon_{p,q}) -c_{p,q}}{3}.\]
\end{conj}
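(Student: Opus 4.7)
The plan is to split on $p/q$ and use Theorem \ref{maintheorem} as a black box in one regime, then attack the other regime with the potential-method and discharging framework that underlies Theorems \ref{KostochkaYancey} and \ref{maintheorem}.

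For $p/q \geq 7/2$ the conjecture is essentially free. If $G$ is $4$-critical with no $(p,q)$-colouring, then since $G_{7,2} \to G_{p,q}$, any $(7,2)$-colouring of $G$ would compose to a $(p,q)$-colouring, so $G$ has no $(7,2)$-colouring either. Theorem \ref{maintheorem} then gives $e(G) \geq 17v(G)/10 = \frac{(5 + 1/10)v(G)}{3}$, with $K_4$ and $W_6$ absorbed into the constant $c_{p,q}$. So $\varepsilon_{p,q} = 1/10$ works uniformly, and I would dispatch this range first.

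For $3 < p/q < 7/2$, which is exactly the window populated by the sharpness examples of Observation \ref{sharpness}, the substantive work begins. Taking a minimum counterexample $G$ minimising $(5 + \varepsilon_{p,q})v(G) - 3e(G)$, I would exploit that $4$-critical graphs with no $(p,q)$-colouring are $G_{p,q}$-critical (so every proper subgraph admits a homomorphism to $G_{p,q}$) and run an extension-based reducibility analysis. Since $G_{p,q}$ has degree $p - 2q + 1$, removing a small vertex set $S$ and trying to extend a $(p,q)$-colouring of $G - S$ costs each interior vertex of $S$ with $t$ external neighbours at most $(2q-1)t$ colours. This slack should be enough to rule out many local configurations: long paths of $3$-vertices, small separators, low-density subgraphs. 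A discharging scheme assigning initial charge $d(v) - (10 + 2\varepsilon_{p,q})/3$ to each vertex and redistributing across these forbidden configurations would then contradict the existence of the counterexample and fix the constants $\varepsilon_{p,q}$ and $c_{p,q}$.

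The hard part is executing this second phase uniformly in $(p,q)$. The sharpness examples force $\varepsilon_{p,q} \to 0$ as $p/q \to 3$, so the constants are delicate and naive potentials will fail. The reducibility arguments in the $(7,2)$ case exploit the small size and high symmetry of $G_{7,2}$; for $(10,3)$, $(11,3)$, $(13,4)$, and larger denominators, the combinatorics of colour extension in $G_{p,q}$ is less transparent and the number of local colourings that must be ruled out grows with $q$. I expect the main obstacle to be establishing, for general $(p,q)$, a structural analogue of Theorem \ref{GallaiTreestructure} that also yields a quantitative density bonus — essentially, showing that configurations forbidden in $G_{7,2}$-critical graphs remain forbidden, with a controllable loss of slack, in $G_{p,q}$-critical graphs. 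A reasonable intermediate milestone is to settle the $q = 3$ cases completely, where the sharpness constructions are explicit and should indicate which configurations are extremal; any pattern that emerges there is the most likely route to a uniform statement covering all $(p,q)$ with $3 < p/q < 7/2$.
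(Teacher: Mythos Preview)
The statement you are attempting to prove is labelled as a \emph{conjecture} in the paper, and the paper does not prove it; it is stated as an open problem motivated by Theorem~\ref{maintheorem}. There is therefore no proof in the paper to compare your proposal against.

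That said, your proposal is not a proof either. It decomposes correctly into two regimes, and the first regime $7/2 \le p/q < 4$ is genuinely settled by your argument: the homomorphism $G_{7,2}\to G_{p,q}$ forces any $4$-critical graph with no $(p,q)$-colouring to also lack a $(7,2)$-colouring, whence Theorem~\ref{maintheorem} gives $e(G)\ge 17v(G)/10 = (5+1/10)v(G)/3$, and the two exceptional graphs $K_4$ and $W_6$ are absorbed by choosing $c_{p,q}$ large enough. This part is clean and correct, though it is an immediate corollary of the paper's main theorem rather than new mathematics.

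The second regime $3 < p/q < 7/2$ is the entire content of the conjecture, and here you offer only a research outline, not a proof. You explicitly write that ``the hard part is executing this second phase uniformly in $(p,q)$'' and that you ``expect the main obstacle to be establishing \ldots a structural analogue of Theorem~\ref{GallaiTreestructure}''. These are honest assessments, but they amount to restating the problem. The specific mechanisms you invoke (potential method, discharging with initial charge $d(v)-(10+2\varepsilon_{p,q})/3$, interval-list extension with slack $(2q-1)t$) are reasonable heuristics, but none of the reducibility or extension lemmas needed to make them work are stated, let alone proved, and you yourself note that the $(7,2)$ arguments rely on features of $G_{7,2}$ that do not obviously persist for larger $q$. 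In short: you have proved the easy half and written a grant proposal for the open half. The conjecture remains open.
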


We give a brief overview of the proof of Theorem \ref{maintheorem} and Theorem \ref{GallaiTreestructure}. From a fundamental result of Gallai, we know that the subgraph $D_{3}(G)$ has every block isomorphic to either an odd cycle or a clique. It is easy to see that the cliques have size at most $3$, or $G$ is isomorphic to $K_{4}$. The first part of the proof then is to show that if any block is isomorphic to an odd cycle, then $G$ is isomorphic to an odd wheel. This is done by taking an odd wheel $C$, deleting it, and characterizing when $3$-colouring of $G-C$ extends to a $(7,2)$-colouring. In the cases where we cannot extend, the neighbours of $C$ will form an independent set, and if $G$ is not a wheel, we will be able to reconfigure the colouring so that it will be able to extend to a $(7,2)$-colouring. 

The next part of the proof is to show that assuming we have no odd cycle blocks or $K_{4}$ blocks, that every component of $D_{3}(G)$ is isomorphic to a path or has at most $4$ vertices. This proof follows the same themes as the odd cycle reduction. We delete vertices from $D_{3}(G)$ and ask when we can extend a $3$-colouring to a $(7,2)$-colouring, and show that unless each component is isomorphic to a path or a claw, we can always extend.  

Once we have done this, we show that a vertex minimal counterexample to Theorem \ref{maintheorem} cannot contain a claw component.
Then we use reconfiguration arguments to show that vertices close to path components in $D_{3}(G)$ have reasonably large degree, and finish the proof via discharging.

The structure of the paper is as follows. In Section \ref{prelims} we introduce the basics of circular colouring that will be needed for the paper. We also prove Observation \ref{sharpness} and give the examples of $4$-critical graphs with no $(7,2)$-colouring whose components of the Gallai Tree are isomorphic to claws and arbitrarily long paths, proving part of Theorem \ref{GallaiTreestructure}. 
In Section \ref{indicators} we review the Hell-Ne{\v s}et{\v r}il indicator construction and prove Observation \ref{indicatorobservation}. In Section \ref{Oddcycle}, we prove that for a $4$-critical graph $G$ with no $(7,2)$-colouring, if $D_{3}(G)$ contains an odd cycle, then $G$ is isomorphic to an odd wheel. In Section \ref{detourtokcrit} we prove that for given a $k$-critical graph $G$, $k \geq 4$, if $D_{k-1}(G)$ contains a clique of size $k-1$, then either $G$ is isomorphic to $K_{k}$, or admits a $(2k-1,2)$-colouring. In Section \ref{reductiontopathssection}, we prove that the Gallai Tree of $4$-critical graphs with no $(7,2)$-colouring can only have components isomorphic to claws, paths, or $G$ is isomorphic to an odd wheel. In Section \ref{remainingarguments} we prove that path components are close to vertices of large degree. In Section \ref{finishedhard} we provide the discharging argument to finish the proof.

\section{Preliminaries, Sharpness and Examples}
\label{prelims}
In this section we collect the basic results from colouring that we will make use of throughout the paper. We also exhibit a $4$-critical graph $G$ with a $(7,2)$-colouring which has $e(G) < \frac{27v(G) -20}{15}$, showing that the $t=3$ case of Question \ref{bigquestion} is sharp even when restricted to $4$-critical graphs (after applying the indicator construction). We also give some new examples of $4$-critical graphs with no $(7,2)$-colouring.

\subsection{A sparse $4$-critical graph with a $(7,2)$-colouring}

We will need to know what happens when we have a circular colouring that does not use all all of the colours. For this we need the notion of lower parents.

\begin{definition}
Let $p$ and $q$ be positive integers where $\frac{p}{q} \geq 2$, and $\gcd(p,q) =1$. The unique positive integers $p'$ and $q'$ where $p' < p$ that satisfy the equation $pq' - q'p = 1$ are called the \textit{lower parents of $p$ and $q$}. 
\end{definition}

We will say that two graphs $G$ and $H$ are \textit{homomorphically equivalent} if $G \to H$ and $H \to G$.

\begin{lemma}[\cite{HomBook} Lemma $6.6$]
Let $p$ and $q$ be positive integers that are relatively prime, and satisfy $\frac{p}{q} \geq 2$. Let $p'$ and $q'$ be the lower parents of $p$ and $q$. Then for any vertex $x \in V(G_{p,q})$, the graph $G_{p,q}-x$ is homomorphically equivalent to $G_{p',q'}$. 
\end{lemma}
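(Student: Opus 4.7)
The plan is to construct explicit homomorphisms in both directions, with all computations driven by the defining identity $pq' - qp' = 1$. Since $G_{p,q}$ is vertex-transitive (each rotation $z \mapsto z + c \pmod p$ is an automorphism), I may assume $x = 0$, so it suffices to show that $G_{p,q} - 0$ and $G_{p',q'}$ are homomorphically equivalent.

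For the direction $G_{p',q'} \to G_{p,q} - 0$, rewriting the defining identity as $\frac{p'}{q'} = \frac{p}{q} - \frac{1}{qq'} < \frac{p}{q}$ yields a homomorphism $g : G_{p',q'} \to G_{p,q}$ by the standard fact recalled earlier in the excerpt that $G_{a,b} \to G_{c,d}$ whenever $\frac{a}{b} \le \frac{c}{d}$. The image of $g$ has at most $p' < p$ vertices, hence misses some $y \in V(G_{p,q})$; composing with the rotation $z \mapsto z - y \pmod p$ produces the required homomorphism avoiding $0$.

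For the harder direction $G_{p,q} - 0 \to G_{p',q'}$, my proposed map is $f(i) = \lfloor ip'/p \rfloor$ for $i \in \{1, \ldots, p-1\}$. Given an edge $ij \in E(G_{p,q})$ with $i < j$, so $q \le j - i \le p - q$, I need $q' \le f(j) - f(i) \le p' - q'$. The key numerical inputs are $qp'/p = q' - 1/p$ and $(p-q)p'/p = p' - q' + 1/p$, both immediate from the defining identity; generically these force $f(j) - f(i)$ into $\{q', \ldots, p' - q'\}$. The main obstacle is controlling two boundary failures. When $j - i = q$, the difference $f(j) - f(i)$ drops to $q' - 1$ precisely when the fractional part of $ip'/p$ is smaller than $1/p$; since $\gcd(p, p') = 1$ (a consequence of $pq' - qp' = 1$), this forces $i \equiv 0 \pmod p$, impossible since $0$ has been removed. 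When $j - i = p - q$, the difference rises to $p' - q' + 1$ precisely when the fractional part of $ip'/p$ is at least $(p-1)/p$; using the congruence $qp' \equiv -1 \pmod p$, this forces $i \equiv q \pmod p$ and hence $j \equiv 0 \pmod p$, again impossible. I expect these two boundary eliminations to constitute the main technical content of the proof, and it is precisely the removal of one vertex that makes them go through.
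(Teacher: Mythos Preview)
The paper does not supply its own proof of this lemma; it is quoted with citation from Hell--Ne\v{s}et\v{r}il's \emph{Graphs and Homomorphisms}. Your argument is correct and is essentially the standard one found there: the map $i \mapsto \lfloor ip'/p\rfloor$ is precisely the retraction used in that reference, and the two boundary checks you isolate---at $j-i=q$ forcing $i\equiv 0\pmod p$, and at $j-i=p-q$ forcing $j\equiv 0\pmod p$---are exactly the heart of the verification.

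One presentational remark: your sentence ``generically these force $f(j)-f(i)$ into $\{q',\ldots,p'-q'\}$'' is doing real work and deserves a line of justification. For $q<j-i<p-q$ the elementary bounds $\lfloor (j-i)p'/p\rfloor \le f(j)-f(i)\le \lfloor (j-i)p'/p\rfloor+1$, together with $\lfloor (q+1)p'/p\rfloor=q'$ and $\lfloor (p-q-1)p'/p\rfloor=p'-q'-1$ (the latter using $p'\ge 2$, which holds whenever $p\ge 3$), already pin the difference into $[q',p'-q']$ with no further case analysis. This is not a gap, just a place where one more sentence would make the proof self-contained.
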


Hence we have the following corollary:

\begin{cor}
\label{lowerparents}
Let $p$ and $q$ be relatively prime. If a graph $G$ admits a $(p,q)$-colouring that does not use all $p$ colours, then $G$ admits a $(p',q')$-colouring where $p',q'$ are the lower parents of $p$ and $q$.  
\end{cor}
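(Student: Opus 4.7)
The plan is to observe that this is essentially a one-line consequence of the preceding lemma, coupled with the basic fact that homomorphisms compose. First I would unpack the definitions: a $(p,q)$-colouring of $G$ is by definition a homomorphism $f \colon G \to G_{p,q}$. The hypothesis that $f$ does not use all $p$ colours means that there exists some vertex $x \in V(G_{p,q})$ with $x \notin f(V(G))$. Consequently $f$ factors through the induced subgraph on $V(G_{p,q}) \setminus \{x\}$; in other words, $f$ is really a homomorphism $G \to G_{p,q} - x$.

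Next I would invoke the cited Lemma (Lemma 6.6 of \cite{HomBook}), which asserts that $G_{p,q} - x$ and $G_{p',q'}$ are homomorphically equivalent, where $p', q'$ are the lower parents of $p, q$. In particular there is a homomorphism $g \colon G_{p,q} - x \to G_{p',q'}$. Composing gives a homomorphism $g \circ f \colon G \to G_{p',q'}$, i.e.\ a $(p',q')$-colouring of $G$, which is exactly what we wanted.

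The only mild subtlety, and really the one place where one needs to be a bit careful, is to check that the definition of $(p,q)$-colouring in the paper genuinely coincides with admitting a homomorphism to $G_{p,q}$ (so that one can legitimately pass to the induced subgraph $G_{p,q}-x$); this is immediate from the definitions given at the top of the introduction. There is no real obstacle to the argument beyond this routine bookkeeping — the content is all in the earlier lemma — so the proof should fit in two or three sentences.
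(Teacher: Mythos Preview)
Your proposal is correct and matches the paper's approach exactly: the paper gives no explicit proof at all, simply writing ``Hence we have the following corollary,'' which signals precisely the argument you spelled out --- a non-surjective $(p,q)$-colouring factors through $G_{p,q}-x$, and composing with the homomorphism $G_{p,q}-x \to G_{p',q'}$ supplied by the preceding lemma yields the desired $(p',q')$-colouring.
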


Observe that the lower parents of $7$ and $2$ are $3$ and $1$, and hence every $(7,2)$-colouring which is not surjective is a $3$-colouring. With this we will prove that there is a graph on seven vertices that has eleven edges and circular chromatic number $\frac{7}{2}$. 

Let the \textit{Moser Spindle}, denoted $M$, be the unique $4$-Ore graph on $7$ vertices. For clarity, we have $V(M) = \{a,b,c,d,e,f,g\}$, and $E(M) = \{ab,ac,af,ag,bc,bd,cd,de,ef,eg,fg\}$.

\begin{obs}
The Moser Spindle has seven vertices, eleven edges, and circular chromatic number $\frac{7}{2}$. 
\end{obs}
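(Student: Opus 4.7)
The vertex and edge counts are immediate from the explicit description of $M$. For the upper bound $\chi_c(M)\le \tfrac{7}{2}$, I will exhibit a concrete $(7,2)$-colouring, for instance $a\mapsto 0,\ b\mapsto 2,\ c\mapsto 4,\ d\mapsto 6,\ e\mapsto 1,\ f\mapsto 3,\ g\mapsto 5$, and verify edge-by-edge that the colour difference on each of the eleven edges lies in $\{2,3,4,5\}$. Note that this colouring is necessarily surjective, which is consistent with Corollary~\ref{lowerparents}: since the lower parents of $(7,2)$ are $(3,1)$, a non-surjective $(7,2)$-colouring would descend to a proper $3$-colouring of $M$, contradicting the fact that the $4$-Ore graph $M$ is $4$-chromatic by Theorem~\ref{Orebound}.

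The substance of the proof is the lower bound $\chi_c(M)\ge \tfrac{7}{2}$. My plan is to exploit the two $K_4$-minus-an-edge substructures inside $M$: the set $\{a,b,c,d\}$ induces a $K_4$ missing only the edge $ad$, and $\{a,e,f,g\}$ induces a $K_4$ missing only the edge $ae$. These subgraphs share the vertex $a$, and their non-shared ``defective'' vertices $d$ and $e$ are linked by the edge $de\in E(M)$. The central technical lemma I plan to establish is the following: \emph{if $K_4-uv$ admits a $(p,q)$-colouring $f$ with $3\le \tfrac{p}{q}< \tfrac{7}{2}$, then the circular distance of $f(u)$ and $f(v)$ in $\mathbb{Z}_p$ is at most $p-3q$.} To prove the lemma, let $w,x$ be the two remaining vertices of $K_4-uv$: both $f(w)$ and $f(x)$ must lie in $N_{G_{p,q}}(f(u))\cap N_{G_{p,q}}(f(v))$ and be at circular distance at least $q$ from one another. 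Writing $D$ for the circular distance of $f(u)$ and $f(v)$, the assumption $\tfrac{p}{q}<\tfrac{7}{2}$ forces $D\le p/2 < 2q$, which is small enough that the common neighbourhood forms a single arc of length $p-D-2q+1$. For this arc to contain two points at circular distance at least $q$ it must have length at least $q+1$, which rearranges to $D\le p-3q$.

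Given the lemma, the lower bound is short. Suppose for contradiction that $M$ admits a $(p,q)$-colouring with $\tfrac{p}{q}<\tfrac{7}{2}$; since $M$ contains a triangle we also have $\tfrac{p}{q}\ge 3$. Applying the lemma to each of the two $K_4$-minus-an-edge subgraphs yields $\mathrm{dist}_p(f(a),f(d))\le p-3q$ and $\mathrm{dist}_p(f(a),f(e))\le p-3q$. By the triangle inequality for circular distance in $\mathbb{Z}_p$, one obtains $\mathrm{dist}_p(f(d),f(e))\le 2(p-3q)$, while the edge $de\in E(M)$ forces $\mathrm{dist}_p(f(d),f(e))\ge q$. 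Combining these gives $q\le 2p-6q$, i.e.\ $\tfrac{p}{q}\ge \tfrac{7}{2}$, the required contradiction. The main obstacle is the arc-length analysis inside the lemma; everything else is a direct consequence of the two-$K_4$-minus-edge structure of $M$ and the triangle inequality for circular distance.
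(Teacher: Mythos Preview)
Your argument is correct, and it takes a genuinely different route from the paper's. The paper establishes the lower bound $\chi_c(M)\ge \tfrac{7}{2}$ by a pigeonhole observation: since $M$ is $4$-Ore it is $4$-critical and hence has no $3$-colouring, and then one simply notes that no fraction $\tfrac{p}{q}$ with $\gcd(p,q)=1$ and $p\le 7$ lies strictly between $3$ and $\tfrac{7}{2}$; this implicitly invokes the standard fact that $\chi_c(G)$ is always realised with numerator at most $v(G)$. Your approach instead exploits the Ore-composition structure of $M$ directly: the lemma bounding the circular distance of the two ``missing'' endpoints in a $(p,q)$-coloured $K_4$ minus an edge, combined with the triangle inequality across the bridging edge $de$, yields $\tfrac{p}{q}\ge \tfrac{7}{2}$ with no appeal to the numerator bound. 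The paper's proof is shorter and uses only off-the-shelf facts; yours is self-contained, explains structurally why $\tfrac{7}{2}$ is the exact threshold, and would transfer to other graphs built by gluing copies of $K_4$ minus an edge along a common vertex. One small remark: your claim that the common neighbourhood is a single arc is exactly the content of the lemma from \cite{rickjon} quoted in the paper (valid for $\tfrac{p}{q}<4$), so you may simply cite that rather than rederive the interval structure.
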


\begin{proof}
As the Moser Spindle is $4$-Ore, it is $4$-critical, and hence does not have a $3$-colouring. The map where we colour $a$ with $0$, $f$ with $2$, $g$ with $4$, $e$ with $6$, $d$ with $1$, $b$ with $5$, $c$ with $3$ is a $(7,2)$-colouring. Finally it is easy to check that for every $p \in \{4,5,6\}$ there is no integral $q$ where $\gcd(p,q)=1$ such that $3 \leq \frac{p}{q} < \frac{7}{2}$. Therefore $\chi_{c}(M) = \frac{7}{2}$. 
\end{proof}

An astute reader may realize that the Moser Spindle is not isomorphic to $G_{7,2}$, and this implies that there are graphs such are strict subgraphs of $G_{7,2}$ with circular chromatic number $\frac{7}{2}$. This turns out to be the case for any tuples $(p,q)$ unless $p = 2k+1$ and $q =k$, or $q=1$. Rather surprisingly, you can find a subgraph with roughly $O(\sqrt{e(G_{p,q})})$ edges on $p$ vertices with circular chromatic number $\frac{p}{q}$ \cite{zhu_1999}. By appealing to the Kostochka-Yancey Theorem, the Moser Spindle has the fewest edges for a graph on $7$ vertices that is also $4$-critical and has circular chromatic number $\frac{7}{2}$. Now the sharpness claim follows immediately. 

\begin{cor}
For integers $p$ and $q$, satisfying $2 \leq \frac{p}{q} < \frac{7}{2}$, there exists a graph that is $4$-critical with no $(p,q)$-colouring that has
\[e(G) < \frac{27v(G) -20}{15}.\]
\end{cor}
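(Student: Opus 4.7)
The plan is to use the Moser Spindle $M$ as a single uniform witness for every pair $(p,q)$ in the given range. The immediately preceding observation has already done essentially all of the work: it establishes that $M$ is $4$-critical, that $v(M)=7$ and $e(M)=11$, and that the circular chromatic number $\chi_c(M)$ equals $7/2$. With these facts in hand, the corollary reduces to two short verifications for the fixed choice $G = M$.

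First, I would verify that $M$ admits no $(p,q)$-colouring whenever $2 \leq p/q < 7/2$. A $(p,q)$-colouring is by definition a homomorphism $M \to G_{p,q}$, and standard properties of circular cliques ensure that such a homomorphism exists if and only if $p/q \geq \chi_c(M) = 7/2$. The hypothesis $p/q < 7/2$ therefore rules out any such colouring. Second, plugging $v(M) = 7$ into the density bound one computes
\[
\frac{27 \cdot 7 - 20}{15} = \frac{169}{15} > 11 = e(M),
\]
so the required strict inequality $e(M) < \frac{27 v(M) - 20}{15}$ holds for $G = M$.

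There is no real obstacle; the content of the corollary has been concentrated entirely into the preceding observation, and the only point worth double-checking is the standard characterisation $G \to G_{p,q}$ if and only if $p/q \geq \chi_c(G)$. The effectiveness of the argument hinges on the fact that $M$ sits exactly at the right endpoint of the interval $[2, 7/2)$ rather than strictly inside it, which is why a single graph suffices as a witness for the entire range and why the $t=3$ case of Question~\ref{bigquestion}, once restricted to $4$-critical graphs via the indicator construction, is sharp at precisely this threshold.
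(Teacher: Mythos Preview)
Your proposal is correct and matches the paper's own proof essentially verbatim: both take $G$ to be the Moser Spindle, use the preceding observation that $\chi_c(M)=7/2$ with $v(M)=7$ and $e(M)=11$, and then simply check that $11 < 169/15$. The only extra content in your write-up is the explicit appeal to the standard fact that $G\to G_{p,q}$ iff $p/q\ge\chi_c(G)$, which the paper leaves implicit.
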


\begin{proof}
The Moser Spindle has circular chromatic number $\frac{7}{2}$, seven vertices and eleven edges. Observe that $11 < \frac{169}{15}$. 
\end{proof}

Of course, this is not the most satisfying sharpness example. It would be much more interesting if an infinite family were found.  Nevertheless, it does show that the bound in Observation \ref{indicatorobservation} is sharp with respect to the values of $p$ and $q$. 

\subsection{Useful Background Lemmas}
We will need one more idea from circular colouring. As in circular colouring colours may be distinct but still not allowed to be adjacent, it is convenient to be able to talk about intervals of colours. If we have $p$ colours, and integers $i,j \in \{0,1,\ldots,k-1\}$ we denote $[i,j]$ as the set of colours $\{i,i+1,\ldots,j\}$ where the values are reduced modulo $p$. When $p$ is fixed, we will assume intervals are taken modulo $p$. 

Given a graph $G$ and a vertex $v$, let $N_{G}(v)$ denote the neighbourhood of $v$ in $G$. If there is no possibility of confusion we will just use $N(v)$. Observe that in any $(p,q)$-colouring $f$, for any vertex $v$ we have
\[f(v) \in \bigcap_{u \in N_{G}(v)}N_{G_{p,q}}(f(u))\]

Given a graph $G$, and an induced subgraph $F$ of $G$ equipped with a $(p,q)$-colouring $f$ of $F$, we say that the set of available colours for $v$ in $G$ is $[0,p-1]$ if $v$ has no neighbours in $F$, and
\[\bigcap_{u \in N_{F}(v)}N_{G_{p,q}}(f(u))\]
otherwise. 

A very useful fact is that when $2 < \frac{p}{q} < 4$, the set of available colours is always an interval.

\begin{lemma}[\cite{rickjon}]
If $\frac{p}{q} < 4$, then for any graph $G$, any $(p,q)$-colouring of $G$, and any vertex $v \in V(G)$, the set of available colours of $v$ is an interval. 
\end{lemma}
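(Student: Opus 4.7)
The plan is to work on the cycle $\mathbb{Z}/p\mathbb{Z}$ and translate the statement about the intersection $\bigcap_{u \in N_F(v)} N_{G_{p,q}}(f(u))$ into a statement about the union of its complements. For each neighbour $u$ with $f(u) = c$, the non-neighbours of $c$ in $G_{p,q}$ form the arc $[c - q + 1, c + q - 1]$ of length $2q - 1$ (with indices mod $p$), so $N_{G_{p,q}}(c)$ is itself an arc of length $p - (2q - 1) = p - 2q + 1$. Thus the available-colour set for $v$ is the complement in $\mathbb{Z}/p\mathbb{Z}$ of the union of $|N_F(v)|$ arcs, each of length $2q - 1$. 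Showing that this available set is an interval is equivalent to showing that the union of the forbidden arcs is connected in the cyclic sense, i.e.\ is either a single arc or all of $\mathbb{Z}/p\mathbb{Z}$.

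The first key step is a pairwise-intersection lemma: when $p/q < 4$, any two arcs of length $2q - 1$ on $\mathbb{Z}/p\mathbb{Z}$ must intersect. Indeed, two disjoint arcs on a cycle of length $p$ determine two nonempty gaps, so the combined arc-length is at most $p - 2$. If both arcs have length $2q - 1$, this would force $4q - 2 \le p - 2$, i.e.\ $p \ge 4q$, contradicting $p/q < 4$. So under the hypothesis, the forbidden arcs pairwise intersect.

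The second key step is a general cyclic fact: if a finite collection of arcs on a cycle pairwise intersect, then their union is connected, hence is either an arc or the full cycle. This follows because any connected subset of $\mathbb{Z}/p\mathbb{Z}$ is either an arc or the whole cycle, and if the union were disconnected into two pieces, then two arcs lying in different pieces would be disjoint, contradicting pairwise intersection. Applying this to our forbidden arcs, the union of forbidden colours is an arc or all of $\mathbb{Z}/p\mathbb{Z}$, so its complement, the set of available colours for $v$, is an interval (possibly empty, or all of $[0, p - 1]$ in the trivial case $N_F(v) = \emptyset$).

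I do not expect any serious obstacle here: the argument is essentially a bookkeeping exercise about arc lengths on a cycle, with the hypothesis $p/q < 4$ appearing exactly once, in the pairwise-intersection step. The only mildly delicate point is to treat the degenerate cases (no neighbours, or the union of forbidden arcs being all of $\mathbb{Z}/p\mathbb{Z}$) consistently with the convention that the whole cycle and the empty set both count as intervals for the purpose of the lemma.
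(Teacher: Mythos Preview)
The paper does not prove this lemma; it merely cites it from \cite{rickjon} and then uses it without reference. So there is no ``paper's proof'' to compare against, and your argument is a genuine contribution of a proof for a cited fact.

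Your overall strategy is sound and the proof is essentially correct, but there is a small slip in the pairwise-intersection step. You claim that when $p/q<4$, any two arcs of length $2q-1$ on $\mathbb{Z}/p\mathbb{Z}$ must intersect, arguing that two disjoint arcs leave two nonempty gaps and hence have total length at most $p-2$. This is not quite right: two disjoint arcs may be \emph{adjacent} on the cycle, leaving only one nonempty gap. For a concrete counterexample take $p=7$, $q=2$ (so $p/q=3.5<4$): the arcs $\{0,1,2\}$ and $\{3,4,5\}$ are disjoint arcs of length $2q-1=3$.

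The fix is immediate and does not disturb the rest of your argument. Replace ``any two forbidden arcs intersect'' by ``any two forbidden arcs have connected union (an arc or all of $\mathbb{Z}/p\mathbb{Z}$)''. Indeed, if two arcs of length $2q-1$ have disconnected union, then both gaps are nonempty and your inequality $4q-2\le p-2$ goes through, contradicting $p<4q$. Your Step~4 then carries over verbatim: if the union $\bigcup_i A_i$ were disconnected, some $A_i$ and $A_j$ would lie in different components and hence be non-adjacent disjoint arcs, so $A_i\cup A_j$ would be disconnected, contradicting the pairwise statement. (Alternatively, induct: $A_1\cup\cdots\cup A_k$ connected and $A_{k+1}\cup A_1$ connected, sharing $A_1$, forces $A_1\cup\cdots\cup A_{k+1}$ connected.) With this adjustment the proof is complete.
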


We will use this fact without reference. 
We also record some facts about $k$-critical graphs which we use without reference. We start off with a very easy observation. 

\begin{obs}
A $k$-critical graph is $(k-1)$-edge-connected. In particular, the minimum degree of a $k$-critical graph is at least $k-1$. 
\end{obs}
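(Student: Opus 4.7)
The plan is to prove the edge-connectivity statement directly (the minimum degree bound will then follow, since edge-connectivity is at most minimum degree), but I would first dispose of the degree bound as a warm-up because the argument is a clean template for the harder claim.

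First I would handle the minimum degree bound. Suppose for contradiction that $v \in V(G)$ has $\deg_G(v) \le k-2$. By $k$-criticality, $G-v$ admits a proper $(k-1)$-colouring $c$. Since $v$ has at most $k-2$ neighbours, at most $k-2$ of the $k-1$ colours appear on $N(v)$, so we can extend $c$ to a proper $(k-1)$-colouring of $G$, contradicting $\chi(G)=k$.

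Next, for $(k-1)$-edge-connectivity, I would assume for contradiction that there is an edge cut $F \subseteq E(G)$ of size at most $k-2$ separating $V(G)$ into nonempty parts $A$ and $B$, with $F = E(A,B)$. Both induced subgraphs $G[A]$ and $G[B]$ are proper subgraphs of the connected graph $G$ (since $F$ is a separating set), so by $k$-criticality each admits a proper $(k-1)$-colouring, say $c_A$ and $c_B$, using colours in $\{1,\dots,k-1\}$. The goal is to find a permutation $\pi$ of the colour set $\{1,\dots,k-1\}$ such that, for every edge $uv \in F$ with $u\in A$ and $v\in B$, one has $c_A(u) \ne \pi(c_B(v))$; then $(c_A, \pi\circ c_B)$ gives a proper $(k-1)$-colouring of $G$, the desired contradiction.

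To produce $\pi$, I would view permutations of $\{1,\dots,k-1\}$ as perfect matchings in the complete bipartite graph $K_{k-1,k-1}$ between two copies of the colour set. Each edge $uv \in F$ with $u\in A$, $v\in B$ forbids exactly one pair, namely $(c_A(u), c_B(v))$, and so rules out at most $(k-2)!$ of the $(k-1)!$ perfect matchings. Since $|F| \le k-2$, the number of forbidden permutations is at most $(k-2)\cdot(k-2)! < (k-1)!$, so a valid $\pi$ exists. I would probably present this as a simple counting lemma and keep the proof self-contained.

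The main obstacle, and the only step requiring any care, is the permutation argument; the worry would be verifying that one can indeed combine the two colourings while respecting the cut edges, and the counting bound above is the cleanest way around it. One minor subtlety to address is that, strictly speaking, $G[A]$ and $G[B]$ must be proper subgraphs in the sense used in the definition of criticality: this is automatic since $G$ has at least one edge in $F$, so $G[A]\cup G[B]$ is a proper subgraph of $G$, and a $(k-1)$-colouring of this union restricts to $(k-1)$-colourings of $G[A]$ and $G[B]$.
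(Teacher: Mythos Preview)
Your proof is correct. The paper does not actually prove this observation at all; it is stated as a well-known background fact and used without reference, so there is nothing to compare against. Your permutation-counting argument for the edge-connectivity claim is a standard and clean approach. One small remark: your concern at the end that $F$ must be nonempty is unnecessary, since $G[A]$ and $G[B]$ are proper subgraphs simply because $A$ and $B$ are both nonempty (each is missing vertices), and in any case the counting bound $|F|\cdot(k-2)! < (k-1)!$ holds even when $|F|=0$.
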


Recall that a \textit{block} of a graph is a maximal $2$-connected subgraph. The Gallai-Tree Theorem gives structure to subgraph induced by the vertices of degree $k-1$.

\begin{thm}[\cite{GallaiForests}, Gallai-Tree Theorem]
\label{GallaiTreeTheorem}
Let $G$ be a $k$-critical graph. Then every block of $D_{k-1}(G)$ is a clique or an odd cycle. 
\end{thm}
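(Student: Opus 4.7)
The plan is to prove the contrapositive: if some block $B$ of $D_{k-1}(G)$ is neither a clique nor an odd cycle, then $G$ admits a proper $(k-1)$-colouring, contradicting the $k$-criticality of $G$.

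First I would reduce to a list-colouring problem on $B$. Handle the edge case $V(B) = V(G)$ separately by Brooks' theorem: then $G = B$ is $2$-connected and $(k-1)$-regular but not a clique or odd cycle, so $\chi(G) \leq k-1$ directly. Otherwise, $G - V(B)$ is a proper subgraph and hence admits a proper $(k-1)$-colouring $c$. Since blocks are induced subgraphs, $B = G[V(B)]$, so each $v \in V(B)$ has exactly $d_B(v)$ neighbours inside $V(B)$ and, because $v \in D_{k-1}(G)$, exactly $k-1-d_B(v)$ neighbours in $V(G) \setminus V(B)$ already coloured by $c$. Setting
\[L(v) := \{1,\ldots,k-1\} \setminus c\bigl(N_G(v) \setminus V(B)\bigr),\]
we obtain $|L(v)| \geq d_B(v)$, and extending $c$ to all of $G$ reduces to finding a proper list colouring of $B$ from these lists.

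The second step is to invoke the degree-choosability theorem of Erd\H{o}s, Rubin, and Taylor: a connected graph $H$ is list-colourable whenever each vertex $v$ has list size at least $d_H(v)$, except when every block of $H$ is a clique or an odd cycle. Applied to $H = B$ (a $2$-connected graph, hence its own unique block), the hypothesis that $B$ is neither a clique nor an odd cycle guarantees that the required list colouring exists. Gluing this colouring with $c$ yields a proper $(k-1)$-colouring of $G$, contradicting $k$-criticality.

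The main obstacle is the degree-choosability step. If one wishes to avoid Erd\H{o}s--Rubin--Taylor as a black box, the classical self-contained argument proceeds by locating an induced path $v_1 v_2 v_3$ in $B$ with $v_1 v_3 \notin E(B)$ (such a path exists since $B$ is $2$-connected but not a clique), list-colouring $B - \{v_1,v_3\}$ inductively from lists restricted by removing a common available colour, and then assigning $v_1$ and $v_3$ that same colour — the case analysis has to treat small blocks such as even cycles or $K_4$-minus-an-edge carefully, but is otherwise routine. Either approach yields Theorem \ref{GallaiTreeTheorem}.
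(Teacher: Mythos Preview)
The paper does not prove Theorem~\ref{GallaiTreeTheorem}; it is quoted with a citation to Gallai and used as a background result, so there is no in-paper proof to compare against.

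Your argument is correct and is the standard modern proof: reduce to a list-colouring problem on an offending block $B$ of $D_{k-1}(G)$ and invoke the Borodin/Erd\H{o}s--Rubin--Taylor characterisation of degree-choosable graphs. The reduction is clean --- the key points that $B$ is induced in $G$ (because $D_{k-1}(G)$ is induced and blocks are induced) and that each $v \in V(B)$ therefore has exactly $k-1-d_B(v)$ precoloured neighbours are handled correctly, and the boundary case $V(B)=V(G)$ via Brooks' theorem is appropriate. Since a block which is neither a clique nor an odd cycle is genuinely $2$-connected on at least four vertices, the hypothesis of ERT is met.

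One small remark on your self-contained sketch: the idea of locating an induced $P_3$ with non-adjacent ends and giving those ends a common colour is indeed the engine of the usual argument, but as written it suppresses two nontrivial points --- that $L(v_1)$ and $L(v_3)$ share a colour (which can fail, and one then argues differently), and that the remaining vertices must be ordered so that a greedy extension terminating at $v_2$ succeeds (this typically requires $B-\{v_1,v_3\}$ to be connected, which in turn constrains the choice of $v_1,v_3$). You rightly flag that care is needed; the black-box appeal to ERT is the cleaner route and is entirely acceptable here.
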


We will call the graph $D_{k-1}(G)$  the \textit{Gallai Tree of $G$}. We will use Theorem \ref{GallaiTreeTheorem} without reference. For $4$-critical graphs, this implies the following.

\begin{cor}
In a $4$-critical graph that is not $K_{4}$, every block of the Gallai Tree is either isomorphic to $K_{1}$, isomorphic to $K_{2}$, or an odd cycle.
\end{cor}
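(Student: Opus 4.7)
The plan is to derive this directly from Theorem \ref{GallaiTreeTheorem} by ruling out the cliques of size at least $4$. The key observation is that $K_3$ is already an odd cycle, so it falls under the ``odd cycle'' case and we only need to rule out $K_t$ for $t \geq 4$.

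First, I would apply Theorem \ref{GallaiTreeTheorem} with $k=4$: every block of $D_3(G)$ is a clique or an odd cycle. A block isomorphic to $K_1$ or $K_2$ already appears in the conclusion of the corollary, and a block isomorphic to $K_3$ is simultaneously an odd cycle, so such a block is also allowed. It therefore remains to show that $D_3(G)$ contains no block isomorphic to $K_t$ for $t \geq 4$.

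Next, suppose for contradiction that some block of $D_3(G)$ is isomorphic to $K_t$ with $t \geq 4$. Every vertex $v$ in this block lies in $D_3(G)$, so $v$ has degree exactly $3$ in $G$. On the other hand, $v$ has $t-1 \geq 3$ neighbours inside the block, so $t - 1 \leq 3$, forcing $t = 4$. Moreover, equality $t - 1 = 3$ shows that every vertex of the $K_4$-block has all three of its $G$-neighbours inside the block. Hence this $K_4$ is an entire connected component of $G$.

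Finally, since $G$ is $4$-critical, it is $3$-edge-connected, and in particular connected. Therefore $G$ itself is isomorphic to this $K_4$-component, contradicting the hypothesis that $G \ne K_4$. The main (and essentially only) obstacle is the bookkeeping observation that $K_3$ need not be excluded since it is already an odd cycle; everything else follows from a one-line degree count plus connectivity.
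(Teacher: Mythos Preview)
Your proof is correct and follows the same outline as the paper's: apply the Gallai--Tree Theorem and then rule out clique blocks $K_t$ with $t \geq 4$. The only minor difference is in how $K_4$ is excluded: the paper observes directly that $K_4$ is itself $4$-critical, so a $4$-critical graph $G \neq K_4$ cannot contain $K_4$ as a proper subgraph, whereas you use a degree count to show a $K_4$ block would be an entire component and then invoke connectivity. Both are one-line arguments reaching the same conclusion.
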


\begin{proof}
Clearly $K_{4}$ is $4$-critical, and hence the largest clique a $4$-critical graph can have is $K_{4}$. The rest follows immediately from the Gallai-Tree Theorem.
\end{proof}

Lastly, we introduce some notation which is mostly standard. For a set of vertices $X$, we let $N(X)$ denote the \textit{neighbourhood of $X$}, which is the set of vertices adjacent to a vertex in $X$, but not in $X$. We let the \textit{degree} of a vertex be $\deg(v) = |N(v)|$. We will use the notation $\deg_{t}(v)$ to denote the number of vertices in the neighbourhood of $v$ with degree $t$. If $G$ is equipped with a $k$-colouring, we let $N_{t}(X)$ denote the set of neighbours of $X$ coloured $t$. For ease throughout the paper, when given a $3$-colouring, we will always assume the colours used are from the set $\{0,2,4\}$. This is so we can extend to a $(7,2)$-colouring without any cumbersome change in values. 

\subsection{Examples of $4$-critical graphs with no $(7,2)$-colourings}

In this section we collect the known examples from the literature of $4$-critical graphs with no $(7,2)$-colouring, and provide an operation which preserves the property of being $4$-critical and having no $(7,2)$-colouring. This operation produces to the best of my knowledge a new infinite family of $4$-critical graphs with no $(7,2)$-colouring (the family is surely known before, however the new property is that they have no $(7,2)$-colouring). In particular this family demonstrates that there are $4$-critical graphs with no $(7,2)$-colourings with claw components or arbitrarily long paths in their Gallai Tree.

Before we describe the operation, we collect some examples from the literature. 
Recall that the \textit{circular chromatic number} is defined to be
\[\chi_{c}(G) := \inf \{\frac{p}{q} \, | \, G \to G_{p,q}\}.\]

It follows from a theorem in \cite{zhusufficient} that if the complement of a graph $G$ is disconnected, then $\chi_{c}(G) = \chi(G)$. As the complement of an odd wheel is disconnected, we obtain our first example. 

\begin{obs}
All odd wheels have circular chromatic number four. 
\end{obs}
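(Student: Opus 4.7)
The plan is to apply directly the criterion cited immediately before the observation: if the complement of a graph $G$ is disconnected, then $\chi_c(G) = \chi(G)$. So the entire argument reduces to two elementary checks.

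First, I would verify that the complement of an odd wheel is disconnected. By definition, $W_n$ consists of a cycle $C_{n-1}$ together with a hub vertex $h$ adjacent to every vertex of the cycle. In the complement $\overline{W_n}$, the vertex $h$ has degree $0$, so $\overline{W_n}$ is disconnected (for $n \geq 4$, which is implicit since we need an odd wheel).

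Next, I would compute $\chi(W_n) = 4$ when $W_n$ is odd, i.e.\ when $n$ is even and hence the rim $C_{n-1}$ has odd length. The odd cycle requires exactly three colours, and since $h$ is adjacent to every vertex on the rim, any proper colouring of $W_n$ must give $h$ a colour distinct from all three used on the rim, forcing $\chi(W_n) \geq 4$; conversely $4$ colours clearly suffice. Combining these two observations with the cited theorem from \cite{zhusufficient} yields $\chi_c(W_n) = \chi(W_n) = 4$, as desired.

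There is no real obstacle here: the statement is a direct consequence of a known sufficient condition for the circular chromatic number to coincide with the ordinary chromatic number, and the relevant structural property (isolation of the hub in the complement) is immediate from the definition of a wheel. The only thing to be a little careful about is making sure the parity convention from the paper (that ``odd wheel'' means $n$ is even, so that the rim $C_{n-1}$ is an odd cycle) is what forces $\chi(W_n) = 4$ rather than $3$.
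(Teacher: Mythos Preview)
Your proposal is correct and follows exactly the paper's approach: apply the cited result from \cite{zhusufficient} (disconnected complement implies $\chi_c = \chi$), note that the hub is isolated in $\overline{W_n}$, and observe that $\chi(W_n)=4$ when the rim is an odd cycle. The paper's own argument is just a one-line invocation of this same reasoning.
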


Our next example uses a well known construction. Given a graph $G$, we let $M(G)$ denote the \textit{Mycielski} of $G$, where $V(M(G)) = V(G) \cup V'(G) \cup \{u\}$, $V'(G) = \{x' | x \in V(G)\}$, and $E(M(G)) = E(G) \cup \{xy' \,|\, xy \in E(G)\} \cup \{y'u \,|\, y' \in V'(G)\}$. 

\begin{thm}[\cite{Mycielskigraphs}]
 For all integers $k$, $\chi_{c}(M(C_{2k+1})) = 4$, and $M(C_{2k+1})$ is $4$-critical. 
\end{thm}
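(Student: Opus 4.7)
The statement decomposes into two parts: $M(C_{2k+1})$ is $4$-critical, and $\chi_{c}(M(C_{2k+1})) = 4$. For $4$-criticality, I would first verify $\chi(M(C_{2k+1})) = 4$ by the classical Mycielski recolouring argument. The upper bound is immediate by construction. For the lower bound, suppose $f$ is a proper $3$-colouring of $M(C_{2k+1})$ with $c = f(u)$, and define $g$ on $V(C_{2k+1})$ by $g(v_i) = f(v_i)$ if $f(v_i) \neq c$ and $g(v_i) = f(v'_i)$ otherwise. Using that $v'_i$ is adjacent to $u,v_{i-1},v_{i+1}$ and $v_i$ is adjacent to $v_{i\pm 1}$ and $v'_{i\pm 1}$, a short case analysis shows $g$ is a proper $2$-colouring of $C_{2k+1}$, contradicting the oddness of the cycle. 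For $4$-criticality itself, exhibit a proper $3$-colouring of $M(C_{2k+1}) - e$ for each edge type (cycle edges, Mycielski edges $v_i v'_j$, and spoke edges $v'_i u$); each is a routine construction.

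For $\chi_c(M(C_{2k+1})) = 4$, the bound $\chi_c \leq \chi = 4$ is immediate. For the lower bound, suppose $f: M(C_{2k+1}) \to G_{p,q}$ is a homomorphism with $\gcd(p,q) = 1$ and $3 \leq \tfrac{p}{q} < 4$. By Corollary \ref{lowerparents}, if $f$ fails to use all $p$ colours, it descends to a $(3,1)$-colouring of $M(C_{2k+1})$, contradicting $\chi = 4$. So $f$ must be surjective. After translating we may take $f(u) = 0$, which forces $f(v'_i) \in I := [q,p-q]$ for every $i$, an interval of size $p - 2q + 1 \leq 2q$ since $p < 4q$.

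The core of the proof is to combine these restrictions with the odd-cycle structure to force $\tfrac{p}{q} \geq 4$. My plan is a winding-number analysis on $C_{2k+1}$: record each signed difference $\delta_i := f(v_{i+1}) - f(v_i)$ as an integer in $\{-(p-q),\dots,-q\} \cup \{q,\dots,p-q\}$; then $\sum_{i=0}^{2k} \delta_i \equiv 0 \pmod p$. Simultaneously, the Mycielski constraint requires $f(v'_i) \in N_{G_{p,q}}(f(v_{i-1})) \cap N_{G_{p,q}}(f(v_{i+1})) \cap I$ to be nonempty, which imposes a local compatibility condition on consecutive pairs $(\delta_{i-1},\delta_i)$ and on the position of $f(v_i)$ relative to the arcs $I$ and its complement. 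Using the winding identity together with the odd length of the cycle, one sees that the $\delta_i$ cannot all have the same sign; combining this with the surjectivity requirement (every colour in $\{0,\dots,p-1\}$ must be attained) and the fact that all $f(v'_i)$ are squeezed into the small arc $I$, one deduces that some colour cannot be hit unless $p \geq 4q$.

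The hard part is turning this sketch into a clean contradiction without a sprawling case analysis, since the local compatibility constraints interact subtly with the global winding constraint. If the direct argument becomes unwieldy, I would iterate Corollary \ref{lowerparents}: every reduced $(p,q)$ with $3 \leq \tfrac{p}{q} < 4$ sits in a finite chain of lower parents terminating at $(3,1)$, so it suffices to rule out a structured family of candidate fractions, which can be handled by a finite inductive descent along the Stern--Brocot chain from $\tfrac{4}{1}$ to $\tfrac{3}{1}$ using the non-$3$-colourability of $M(C_{2k+1})$ as the base case.
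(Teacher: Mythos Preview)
The paper does not prove this theorem; it is quoted as a known result from \cite{Mycielskigraphs} (Chang--Huang--Zhu) and used only as a source of examples. There is therefore no in-paper proof to compare against.

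On the substance of your sketch: the $4$-criticality part is the standard argument and is fine. The circular-chromatic-number part, however, has a genuine gap that your own final paragraph does not close. Your first invocation of Corollary~\ref{lowerparents} is stated too strongly: a non-surjective $(p,q)$-colouring descends to a $(p',q')$-colouring with $(p',q')$ the lower parents, not directly to a $(3,1)$-colouring (e.g.\ the lower parents of $(11,3)$ are $(7,2)$). You later correct this by proposing a descent along the lower-parent chain to $(3,1)$, and that chain is indeed finite for each fixed $(p,q)$. But the inductive step at every level still requires ruling out a \emph{surjective} $(p,q)$-colouring, so the descent does not let you bypass the hard part --- it only reduces the problem to exactly the winding/compatibility analysis you left unfinished. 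In other words, the Stern--Brocot fallback is circular: to carry out the induction you must already have solved the case you were trying to avoid.

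For the winding-number step itself, the outline is plausible but not yet an argument. The constraints you list (all $f(v'_i)$ lie in the arc $I=[q,p-q]$ of length $p-2q+1<2q$, the $\delta_i$ sum to a multiple of $p$, and each $v_i$ must see a nonempty intersection $N(f(v_{i-1}))\cap N(f(v_{i+1}))\cap I$) do not by themselves force $p\geq 4q$ without a further structural lemma; the published proofs extract from these constraints a $2$-colouring of the underlying odd cycle (or an equivalent parity obstruction), and that extraction is the actual content of the theorem. As written, your proposal identifies the right ingredients but does not supply the combinatorial lemma that converts them into a contradiction.
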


Observe that the Gallai Tree of the Mycielski of an odd cycle is a collection of isolated vertices. We do not define the family here as they do not contain vertices of degree $3$, but for completeness we note that in \cite{large4crit}, an infinite family of $4$-regular $4$-critical graphs with $\chi_{c}(G) = \chi(G)$ was found. 

Now we describe an operation which preserves $4$-criticality and not having a $(7,2)$-colouring. The operation given here generalizes the operation called the ``iterated Mycielski" in  \cite{iteratedMycielski} when restricted to $4$-critical graphs.  

\begin{definition}
\label{c6expansion}
Let $G$ be a graph. Let $v \in V(G)$ such that $N(v) = \{x,y,z\}$. The \textit{$C_{6}$-expansion of $G$ with respect to $v$} is the graph $G'$ obtained by deleting $v$ from $G$, and adding four new vertices $x',y',z',w$ with edges $x'y,x'z,x'w, y'x, y'z, y'w, z'x,z'y$ and $z'w$. 
\end{definition}

\begin{lemma}
\label{clawexpansion}
Let $G$ be the $C_{6}$-expansion of a graph $H$ at a vertex $v$, where $H$ is $4$-critical and has no $(7,2)$-colouring. Then $G$ is $4$-critical and has no $(7,2)$-colouring. 
\end{lemma}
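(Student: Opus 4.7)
The plan is to prove the lemma in two parts: first, that $G$ has no $(7,2)$-colouring, and second, that $G$ is $4$-critical. A useful preliminary is to identify the gadget structure on $\{x,y,z,x',y',z',w\}$: the six vertices $\{x,y,z,x',y',z'\}$ induce the hexagon $x\,y'\,z\,x'\,y\,z'\,x$ (equivalently $K_{3,3}$ with a perfect matching removed), and $w$ is an apex joined to the independent triple $\{x',y',z'\}$.

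For the non-existence part I would argue contrapositively. Suppose $G$ has a $(7,2)$-colouring $f$. The plan is to extend $f|_{V(H)\setminus\{v\}}$ to a $(7,2)$-colouring of $H$ by colouring $v$ with any common $(7,2)$-neighbour of $f(x), f(y), f(z)$, contradicting the hypothesis on $H$. Thus it suffices to show that these three colours always have a common $(7,2)$-neighbour. Since the non-neighbourhood of each vertex $c$ in $G_{7,2}$ is exactly $\{c-1,c,c+1\}$, three colours fail to have a common neighbour iff their non-neighbourhoods cover $\mathbb{Z}_{7}$; a short arc-covering argument shows this is equivalent to the cyclic gaps between the three colours on $\mathbb{Z}_{7}$ being each at most $3$. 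Since the gaps sum to $7$, only the gap-multisets $\{2,2,3\}$ (the $G_{7,2}$-triangle case) and $\{1,3,3\}$ arise. In each case I would verify directly that the gadget admits no $(7,2)$-colouring with these colours on $\{x,y,z\}$: pairs of colours at circular distance $3$ in $G_{7,2}$ have a unique common neighbour, forcing $f(x'), f(y'), f(z')$ into specific values, and then the constraints routed through $f(w)$ yield a contradiction. By the symmetries of $G_{7,2}$ (rotations and reflections) combined with the $S_{3}$-symmetry of the gadget on $\{x,y,z\}$, one representative per gap-multiset suffices.

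For $4$-criticality, the bound $\chi(G) \geq 4$ is immediate from Part 1 since any $3$-colouring would itself be a $(7,2)$-colouring. For $\chi(G) \leq 4$, any $4$-colouring of $H$ extends to $G$ by setting $x' = y' = z' = f(v)$ (using independence of $\{x',y',z'\}$) and $w$ to any colour different from $f(v)$. To establish criticality I must produce a $3$-colouring of $G - e$ for every edge $e$. If $e \in E(H - v)$, the $3$-colouring of $H - e$ (guaranteed by criticality of $H$) extends by the same rule. If $e$ is one of the six new non-$w$ edges, say $e = x'y$, I would start from a $3$-colouring $f$ of $H - vx$ and use the crucial fact that $f(v) = f(x)$ must hold — for otherwise $f$ would $3$-colour $H$ itself — and this forced equality provides just enough slack to colour $x', y', z', w$ while respecting the reduced adjacency at $x'$. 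The remaining edges of the form $e = x'w$ are handled by the same type of extension from a $3$-colouring of $H - vx$.

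The chief technical burden is the $\{1,3,3\}$ case analysis in Part 1, which is not forced by the $H$-hypothesis alone; it relies on pairs of colours at circular distance $3$ in $G_{7,2}$ having a unique common neighbour, and on the alternating-hub incidence of $w$ on the $C_{6}$, which together propagate the constraints into a conflict at $w$.
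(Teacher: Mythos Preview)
Your proposal is correct. Part 2 (4-criticality) is essentially the paper's argument, with the minor variation that for the gadget edges you start from a $3$-colouring of $H-v\alpha$ and exploit the forced equality $f(v)=f(\alpha)$, whereas the paper starts from a $3$-colouring of $H-v$; both extensions go through. Part 1 (no $(7,2)$-colouring) is where you genuinely diverge. The paper works on the primed side: fixing $f(x')=0$, it runs a short recolouring case analysis showing that $f(x'),f(y'),f(z')$ can always be made equal, after which identifying $x',y',z'$ and deleting $w$ yields a $(7,2)$-colouring of $H$. You work on the unprimed side: you classify the two gap-multisets $\{2,2,3\}$ and $\{1,3,3\}$ on $\mathbb{Z}_7$ for which $f(x),f(y),f(z)$ have no common $G_{7,2}$-neighbour, and rule each out by forcing $f(x'),f(y'),f(z')$ (via the unique-common-neighbour property for distance-$3$ pairs) and propagating to a contradiction at $w$. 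The two approaches are equivalent in spirit, since a common colour on $x',y',z'$ is precisely a common $G_{7,2}$-neighbour of $f(x),f(y),f(z)$; yours is a clean classify-then-verify argument, while the paper's ad hoc recolouring chain is marginally shorter but less systematic.
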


\begin{proof}
Let the neighbours of $v$ in $H$ be $x,y,z$, with the new vertices in $G$ being $x',y',z',w$ with adjacencies as in Definition \ref{c6expansion}. 
First we observe that $\chi(G) \leq 4$. Let $f$ be a $4$-colouring of $H$. Let $f'$ be a colouring of $G$ where for all $t \in V(G) \setminus \{x',y',z',w\}$ let $f'(t) = f(t)$, for $\{x',y',z'\}$ let  $f'(x') = f'(y')='f(z') = f(v)$ and finally give $w$ any colour that is not $f(v)$. This is a $4$-colouring of $G$, and hence $\chi(G) \leq 4$.

Now we prove that $\chi_{c}(G) > \frac{7}{2}$. Suppose not and let $f$ be a $(7,2)$-colouring of $G$. Observe that if $f(x') = f(y') = f(z')$, then  by identifying $x'$, $y'$ and $z'$ into one vertex and deleting $w$ we obtain a $(7,2)$-colouring of $H$, a contradiction. 

 Suppose that $f(x') = 0$. Further suppose that $f(y') = 0$. Then the image of the neighbourhood of $z'$ is contained in the neighbourhood of $0$ in $G_{7,2}$, and so we can recolour $z'$ to $0$. But then there exists a $(7,2)$-colouring of $G$ where $x',y'$ and $z'$ receive the same colour, a contradiction. Thus we can assume that $f(y') \neq 0$, and by symmetry $f(z') \neq 0$. If $f(y') = 1$ and $f(z') = 6$, then again the image of the neighbourhood of both $y'$ and $z'$ is contained in the neighbourhood of $0$ in $G_{7,2}$, and so we can recolour $y'$ and $z'$ to $0$, and again obtain a $(7,2)$-colouring of $H$, a contradiction. Now assume that $f(y') =2$. Then $f(z') \in \{0,1,2,3\}$ or else there is no available colour for $w$. By the previous cases, it follows that $f(z') = 3$. But this implies that we can recolour $z'$ and $x'$ to $2$, and again contradict that $H$ has no $(7,2)$-colouring. All other cases follow similarly, and thus $G$ has no $(7,2)$-colouring. Observe this also implies that $\chi(G) \geq 4$, and hence $\chi(G) = 4$. 

Therefore to finish the proof, we just need to show that for every edge $e \in E(G)$, $G-e$ is $3$-colourable. First let $e$ be incident to $w$. Let $f$ be a $3$-colouring of $H-v$. We extend $f$ to a $3$-colouring of $G$ by letting $f(x) = f(x')$, $f(y) = f(y')$, $f(z) = f(z')$. As  we deleted an edge incident to $w$, there is an available colour left for $w$ and thus we have a $3$-colouring of $G-e$. 

Now suppose that $e$ is incident to $x'$ but not $w$. Without loss of generality, let $e = zx'$. Then take a $3$-colouring $f$ of $H-v$, and extend $f$ by letting $f(x') = f(z') = f(z)$ and $f(y') = f(y)$, and giving $w$ a colour that is left over. By symmetry we can assume that $e$ is not incident to any of $x',y',z'$. 

Now let $f$ be a $3$-colouring of $H-e$. Then extend $f$ to a $3$-colouring of $G$ by colouring $x',y',z'$ the same colour as $v$, and then giving $w$ any colour left over. Thus it follows that $G-e$ has a $3$-colouring for every edge $e$, and thus $G$ is $4$-critical with no $(7,2)$-colouring. 
\end{proof}

\begin{cor}
There is a $4$-critical graph with no $(7,2)$-colouring whose Gallai Tree is isomorphic to a claw. 
\end{cor}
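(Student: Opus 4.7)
The plan is to apply Lemma \ref{clawexpansion} to the smallest possible seed graph, namely $H = K_{4}$. First I would verify the two hypotheses of the lemma: $K_{4}$ is $4$-critical (this is the base case), and $K_{4}$ has no $(7,2)$-colouring because $\chi(K_{4}) = 4 > \tfrac{7}{2}$, and any graph admitting a $(7,2)$-colouring has circular chromatic number at most $\tfrac{7}{2}$. Moreover, every vertex of $K_{4}$ has exactly three neighbours, so the $C_{6}$-expansion of Definition \ref{c6expansion} is well defined at any vertex $v$; fix such a $v$ and write $N(v) = \{x, y, z\}$, and let $G$ denote the resulting graph.

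By Lemma \ref{clawexpansion}, $G$ is $4$-critical and has no $(7,2)$-colouring, so the only remaining task is to confirm that $D_{3}(G)$ is isomorphic to a claw. A short degree count accomplishes this. In $K_{4}$ each of $x, y, z$ had degree $3$; the expansion removes their edges to $v$ and replaces them with two new edges into $\{x', y', z'\}$, so their degrees become $4$ in $G$ and they drop out of the Gallai tree. Reading the edges off from Definition \ref{c6expansion}, each of $x', y', z'$ is adjacent to exactly two vertices of $\{x, y, z\}$ together with $w$ (giving degree $3$), and $w$ is adjacent only to $x', y', z'$ (giving degree $3$). The only edges in the induced subgraph on $\{x', y', z', w\}$ are $x'w$, $y'w$ and $z'w$, so $D_{3}(G)$ is precisely a claw with centre $w$.

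There is no genuine obstacle. The work has already been done in Lemma \ref{clawexpansion}; the verification that $K_{4}$ has no $(7,2)$-colouring is immediate from $\chi(K_{4}) = 4$, and the degree count is entirely mechanical once one follows the edges prescribed by Definition \ref{c6expansion}. The only subtlety worth flagging is that the expansion must be applied at a degree-$3$ vertex whose neighbours will then be bumped to degree $4$, thereby erasing all of the original Gallai tree of $K_{4}$ and leaving only the new claw $\{x', y', z', w\}$ behind.
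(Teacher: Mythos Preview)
Your proposal is correct and follows exactly the same approach as the paper: apply Lemma~\ref{clawexpansion} to $H=K_{4}$ and then observe that the resulting graph has Gallai Tree a claw. Your degree count spelling out why $x,y,z$ move to degree~$4$ while $x',y',z',w$ form the claw is more detailed than the paper's ``it is easily seen,'' and is accurate. One small wording point: to conclude $K_{4}$ has no $(7,2)$-colouring you need $\chi_{c}(K_{4})>\tfrac{7}{2}$, not merely $\chi(K_{4})>\tfrac{7}{2}$; this is of course true (indeed $\chi_{c}(K_{4})=4$, e.g.\ because $G_{7,2}$ contains no $K_{4}$), but the sentence as written conflates the two invariants.
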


\begin{proof}
Let $H = K_{4}$, and $G$ a $C_{6}$-expansion of any vertex in $H$. Then by Lemma \ref{clawexpansion}, $G$ is $4$-critical, has no $(7,2)$-colouring, and it is easily seen that the Gallai Tree of $G$ is just the claw. 
\end{proof}

\begin{cor}
For any odd positive integer $t$, there is a $4$-critical graph with no $(7,2)$-colouring whose Gallai Tree contains a component isomorphic to a path of length $t$. 
\end{cor}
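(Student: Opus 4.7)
The plan is to realise the required path as what remains of the rim of a sufficiently large odd wheel after performing a single $C_{6}$-expansion at a rim vertex. The four new gadget vertices will form a separate claw component (exactly as in the previous corollary), while the untouched portion of the rim becomes the desired path component.

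Given an odd positive integer $t$, I would set $k = (t+5)/2$, so that $k \geq 3$, and start from the odd wheel $W_{2k}$, whose rim is $C_{2k-1}$ and whose hub $h$ has degree $2k-1$. By the earlier observation that every odd wheel has circular chromatic number four, $W_{2k}$ is $4$-critical with no $(7,2)$-colouring, and $D_{3}(W_{2k})$ is exactly the rim $C_{2k-1}$. I would then apply the $C_{6}$-expansion at an arbitrary rim vertex, labelled $r_{1}$, and call the resulting graph $G$; Lemma~\ref{clawexpansion} immediately gives that $G$ is $4$-critical and admits no $(7,2)$-colouring.

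The key computation is to track degrees. Removing $r_{1}$ and inserting the gadget raises the degrees of its three neighbours $r_{2}, r_{2k-1}, h$ each by one, so the two rim-neighbours move from degree $3$ to degree $4$ (leaving $D_{3}$) while the hub moves to degree $2k$. The rim vertices $r_{3}, r_{4}, \ldots, r_{2k-2}$ are untouched and retain degree $3$, and their induced subgraph in $G$ is exactly the path $r_{3} - r_{4} - \cdots - r_{2k-2}$ of length $2k - 5 = t$. As $k$ ranges over integers at least $3$, the quantity $2k - 5$ takes every odd positive integer value.

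The main thing to verify, and hence the step that requires care, is that this rim path is actually a connected component of $D_{3}(G)$, disjoint both from any other surviving old vertex and from the newly added claw. This follows from the degree bookkeeping: the only non-rim neighbour of any remaining rim vertex is the hub, which has degree $2k$ in $G$; and each gadget leaf has its old-graph neighbours among $\{r_{2}, r_{2k-1}, h\}$, all of which have degree at least $4$ in $G$. Once this is confirmed, $D_{3}(G)$ splits as the disjoint union of the claw contributed by the $C_{6}$-expansion and the rim path of length $t$, completing the construction.
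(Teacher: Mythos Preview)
Your proof is correct and follows exactly the same construction as the paper: the paper's one-line proof applies the $C_{6}$-expansion to a degree-$3$ (rim) vertex of $W_{t+5}$, which is precisely your $W_{2k}$ with $k=(t+5)/2$. You have simply supplied the degree bookkeeping that the paper leaves implicit, confirming that the two rim neighbours of the expanded vertex rise to degree $4$ and hence separate the remaining $t+1$ rim vertices into a path component of $D_{3}(G)$ of length $t$, while the four gadget vertices form the claw component.
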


\begin{proof}
Fix an odd positive integer $t$. Let $G$ be a $C_{6}$-expansion of any of the degree $3$ vertices in $W_{t+5}$. Then the Gallai Tree of $G$ contains a component isomorphic to the claw, and a path of length $t$. 
\end{proof}

To the best of my knowledge, the graphs that can be obtained via $C_{6}$-expansions starting from odd wheels and Mycielski construction and the family in \cite{large4crit} are the only known $4$-critical graphs with no $(7,2)$-colouring. More examples would be helpful in trying to understand the structure of this class of graphs.

\section{Hell-Ne{\v s}et{\v r}il indicator constructions}
\label{indicators}
In this section we prove that the $t=3$ case of Question \ref{bigquestion} implies that $4$-critical graphs with no $(7,2)$-colouring satisfy $e(G) \geq \frac{27v(G) -20}{15}$. 

We first review the basics of the indicator construction. Let $I$ be a graph with distinguished vertices $x$ and $y$, and suppose that there is an automorphism which sends $x$ to $y$. Let $G$ be a graph. We will say that $G \ast I$ is the graph obtained by taking every edge $uv \in E(G)$, deleting the edge, and adding the graph $I$ to $G$ where we identify $u$ with $x$ and $v$ with $y$. Observe this is well defined because there is an automorphism of $I$ which sends $x$ to $y$.

\begin{definition}
Let $G$, $H$, and $I$ be graphs, where $V(H) = V(G)$, and $x$ and $y$ are distinguished vertices of $I$, where there is an automorphism of $I$ which sends $x$ to $y$. Suppose that for any edge $uv \in E(G)$, there exists a homomorphism $f: I \to H$ such that $f(x) = u$ and $f(y) = v$. Then we say that $(I,x,y)$ is an \textit{indicator} for $G$ and $H$.
\end{definition}

The following is easily verified from the definition. 

\begin{lemma}[\cite{HomBook}, Lemma $5.5$]
\label{indicatorconstruction}
Suppose $(I,x,y)$ is an indicator for $G$ and $H$. Then for any graph $K$, $K \to G$ if and only if $K \ast I \to H$. 
\end{lemma}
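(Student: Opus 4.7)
The plan is to prove each direction of the biconditional by directly constructing the required homomorphism, using that $V(H) = V(G)$ and writing $I_e \subseteq K \ast I$ for the copy of $I$ attached to an edge $e = uv \in E(K)$ (whose distinguished vertices $x$ and $y$ are identified with $u$ and $v$). The automorphism of $I$ swapping $x$ and $y$ is precisely what makes $K \ast I$ independent of a chosen orientation of the edges of $K$, so this setup is unambiguous. Every edge of $K \ast I$ lies inside some single $I_e$, so to produce a homomorphism $K \ast I \to H$ it suffices to define it on $V(K)$ and then extend copy-by-copy through $I$.

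For the ``only if'' direction, suppose $\phi : K \to G$ is a homomorphism. I would define $\psi : K \ast I \to H$ by setting $\psi(v) = \phi(v)$ for each $v \in V(K)$, and for each edge $e = uv \in E(K)$ invoking the indicator property at $\phi(u)\phi(v) \in E(G)$ to obtain a homomorphism $f_e : I \to H$ with $f_e(x) = \phi(u)$ and $f_e(y) = \phi(v)$; then map the internal vertices of $I_e$ via $f_e$. Well-definedness at the identified vertices $u$ and $v$ holds because $f_e(x) = \psi(u)$ and $f_e(y) = \psi(v)$, and all edges of $I_e$ are preserved since each $f_e$ is a homomorphism, giving $\psi : K \ast I \to H$.

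For the ``if'' direction, suppose $\psi : K \ast I \to H$ is a homomorphism. I would take $\phi : V(K) \to V(G) = V(H)$ to be the restriction of $\psi$ to $V(K) \subseteq V(K \ast I)$. For each edge $uv \in E(K)$, the restriction $\psi|_{I_{uv}}$ is a homomorphism $I \to H$ sending $x \mapsto \phi(u)$ and $y \mapsto \phi(v)$, and the converse of the indicator property then yields $\phi(u)\phi(v) \in E(G)$, so $\phi$ is a homomorphism $K \to G$.

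The only real content is the reverse direction: to conclude $\phi(u)\phi(v) \in E(G)$ one needs that the existence of a homomorphism $I \to H$ sending $x,y$ to $u,v$ implies $uv \in E(G)$. The definition quoted in the paper states only the forward half of this equivalence; I would interpret ``indicator'' in the standard Hell--Ne\v{s}et\v{r}il sense, where $G$ is by construction the graph on $V(H)$ whose edge set consists of exactly those pairs $uv$ that admit such a homomorphism. Under this reading (or, equivalently, as an additional hypothesis that should be included in the definition), both implications of the lemma are immediate from the constructions above, and this biconditional characterization of edges of $G$ is the crux of the entire argument.
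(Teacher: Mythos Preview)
The paper does not actually prove this lemma; it simply states that it ``is easily verified from the definition'' and cites Lemma~5.5 of the Hell--Ne\v{s}et\v{r}il book. So there is no paper proof to compare against, only your proposal to assess on its own merits.

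Your argument is correct, and you have also correctly identified a genuine sloppiness in the paper's presentation. The definition of indicator given in the paper only asserts the forward implication (edges of $G$ yield homomorphisms $I\to H$ with the right endpoint images); it omits the converse, which is exactly what you need to conclude $\phi(u)\phi(v)\in E(G)$ in the ``if'' direction. Your resolution---reading $G$ as the graph on $V(H)$ whose edges are \emph{precisely} the pairs admitting such a homomorphism, which is the standard Hell--Ne\v{s}et\v{r}il definition---is the right fix, and indeed is how the cited reference states it. With that reading, both constructions you give are straightforward and complete. The only minor point worth making explicit is that in the reverse direction you should note that $\phi(u)\neq\phi(v)$ (so that you are talking about an edge of $G$ rather than a loop); this follows because a homomorphism $I\to H$ sending $x$ and $y$ to the same vertex would, composed with the identification, force adjacent vertices in $I$ along any $x$--$y$ path to collapse improperly---but in practice one simply observes that the standard indicator definition already handles this by requiring $I$ itself to have no such ``loop'' homomorphism.
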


We can use Lemma \ref{indicatorconstruction} to deduce the non-existence of homomorphisms in some instances. 

\begin{cor}
\label{nohomlemma}
Suppose that $(I,x,y)$ is an indicator for $G$ and $H$. If $K$ is $G$-critical, then $K \ast I  \not \to H$. 
\end{cor}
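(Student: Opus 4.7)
The plan is to observe that this corollary is essentially a one-line consequence of Lemma \ref{indicatorconstruction}. First I would unpack the hypothesis: by definition, $K$ being $G$-critical means in particular that $K \not\to G$ (the criticality tells us more, namely that every proper subgraph does admit a homomorphism to $G$, but for this corollary only the non-existence at $K$ is needed).

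Next I would apply Lemma \ref{indicatorconstruction} in its contrapositive form. The lemma asserts the equivalence $K \to G \iff K \ast I \to H$ under the assumption that $(I,x,y)$ is an indicator for $G$ and $H$. Negating both sides, $K \not\to G \iff K \ast I \not\to H$. Combined with the observation in the previous paragraph, we conclude $K \ast I \not\to H$, which is what was to be shown.

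There is no real obstacle here: the statement is packaged as a corollary precisely because all the work has been done in setting up the indicator framework and proving Lemma \ref{indicatorconstruction}. The only thing worth being careful about is making sure the hypotheses of the lemma are inherited cleanly—that is, stating explicitly that $(I,x,y)$ being an indicator for $G$ and $H$ carries over verbatim, and that no additional hypothesis on $K$ (beyond non-homomorphism to $G$) is required. Accordingly the write-up is essentially: ``Since $K$ is $G$-critical, $K \not\to G$. By Lemma \ref{indicatorconstruction}, $K \ast I \not\to H$.''
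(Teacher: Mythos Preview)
Your proposal is correct and matches the paper's proof essentially verbatim: the paper writes ``$K$ is $G$-critical, so $K \not \to G$. By Lemma \ref{indicatorconstruction}, this implies that $K \ast I \not \to H$.'' There is nothing to add.
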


\begin{proof}
$K$ is $G$-critical, so $K \not \to G$. By Lemma \ref{indicatorconstruction}, this implies that $K \ast I \not \to H$. 
\end{proof}

This is of course not useful unless there exists indicator constructions. Here is a particularly useful class of indicators.

\begin{cor}[\cite{HomBook}, proof of Corollary $5.6$]
Let $I$ be the path of length $k-2$ with endpoints $x,y$. Then $(I,x,y)$ is an indicator for $K_{k}$ and $C_{2k+1}$. 
\end{cor}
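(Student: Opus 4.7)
The plan is to verify the two conditions in the definition of an indicator.

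The first, the automorphism condition, is immediate: a path $I$ of length $k-2$ admits the reflective involution that reverses the vertex sequence, and this automorphism exchanges the endpoints $x$ and $y$.

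For the second, the edge condition, I would first identify $V(K_k)$ with a $k$-element subset $S\subseteq V(C_{2k+1})=\{0,1,\ldots,2k\}$, as required by the definition's stipulation $V(G)=V(H)$. Then for each edge $uv$ of $K_k$ (equivalently each pair of distinct elements of $S$), I would exhibit a homomorphism $f:I\to C_{2k+1}$ with $f(x)=u$ and $f(y)=v$. Because $I$ is a path of length $k-2$, giving such an $f$ is the same as giving a walk of length $k-2$ in $C_{2k+1}$ from $u$ to $v$. A standard count shows that the vertices reachable from $u$ in $C_{2k+1}$ by a walk of length $\ell$ are precisely the vertices $u+s\pmod{2k+1}$, where $s\in\{-\ell,-\ell+2,\ldots,\ell-2,\ell\}$ is a signed sum of $\ell$ copies of $\pm 1$.

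The verification therefore reduces to choosing the embedding $S$ so that every non-zero difference $v-u$ (for distinct $u,v\in S$) lies in the length-$(k-2)$ displacement set $\{-(k-2),-(k-4),\ldots,k-4,k-2\}\pmod{2k+1}$; once this is arranged, for each such displacement I would write down an explicit sequence of $\pm 1$ steps of length $k-2$ summing to it, yielding the required walk. I expect the choice of embedding to be the main obstacle: it is a modular-arithmetic check depending on the parity of $k$ and on how $k-2$ interacts with $2k+1$. Once the correct $S$ is identified, the rest of the verification follows mechanically from the $\pm 1$-sum decompositions of the needed displacements.
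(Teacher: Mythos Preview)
The paper does not prove this statement; it is cited from \cite{HomBook} without argument, so there is no proof in the paper to compare against.

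Your reduction to walk-counting is sound: the reflective automorphism of the path is clear, and you correctly observe that a homomorphism from a path of length $\ell$ into $C_{2k+1}$ with prescribed endpoints exists if and only if the endpoint displacement lies in $\{-\ell,-\ell+2,\ldots,\ell\}\pmod{2k+1}$. The gap is that you never exhibit the embedding $S\subseteq V(C_{2k+1})$, deferring it as a ``modular-arithmetic check.''

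This gap is fatal as the statement stands. Take $k=3$: the path has length $1$, so your condition asks for three vertices of $C_7$ that are pairwise adjacent---a triangle in $C_7$, which does not exist. More generally, for $\ell=k-2$ the nonzero displacements modulo $2k+1$ number only $k-1$ (for $k$ odd) or $k-2$ (for $k$ even), and one checks directly that the associated circulant on $2k+1$ vertices contains no $K_k$ for small $k$. So no choice of $S$ can succeed.

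The statement as printed is almost certainly a misprint: what is actually used in the paper (compare the $6$-critical/$C_5$-critical reduction via $P_4$ in the introduction) is that a path of length $k-2$ is an indicator for $K_k$ and $C_k$ when $k$ is odd---equivalently, a path of length $2t-1$ for $K_{2t+1}$ and $C_{2t+1}$. With those parameters your framework goes through immediately: the displacement set becomes $\{\pm 1,\pm 3,\ldots,\pm(k-2)\}\pmod{k}$, which is all of $\{1,\ldots,k-1\}$, so one may take $S=V(C_k)$ and the result graph is literally the complete graph on $V(C_k)$.
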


This so far is not useful for critical graphs. However, one can observe that path indicators with the endpoints as the distinguished vertices preserve criticality. 

\begin{prop}
Let $(I,x,y)$ be an indicator for $G$ and $H$, where $I$ is a  path with at least one edge and $x$ and $y$ are the two endpoints for the path. Let $K$ be a $G$-critical graph. Then $K \ast I$ is $H$-critical. 
\end{prop}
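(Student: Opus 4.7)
The plan is to verify the two defining conditions of $H$-criticality for $K \ast I$: namely that $K \ast I \not\to H$, and that $(K \ast I) - e \to H$ for every edge $e \in E(K \ast I)$. The first condition is essentially free: since $K$ is $G$-critical we have $K \not\to G$, so Lemma \ref{indicatorconstruction} applied with $K$ as the test graph gives $K \ast I \not\to H$ (this is exactly Corollary \ref{nohomlemma}).

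The work is in the second condition. Fix $e \in E(K \ast I)$. Because $I$ is a path, any two distinct $I$-copies in $K \ast I$ meet only at their identification endpoints, so $e$ lies in a unique $I$-copy, corresponding to a unique edge $uv \in E(K)$. By $G$-criticality, $K - uv \to G$, so by Lemma \ref{indicatorconstruction} we obtain a homomorphism $\Phi : (K - uv) \ast I \to H$. Since $(K - uv) \ast I$ is obtained from $K \ast I$ by deleting the internal vertices of the $uv$-path together with all of its edges, $\Phi$ already colours every vertex of $(K \ast I) - e$ apart from the internal vertices of the $uv$-path. So it suffices to extend $\Phi$ by colouring these internal vertices in such a way that every edge of the $uv$-path except $e$ is mapped to an edge of $H$.

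Removing $e$ splits the $uv$-path into two subpaths $P_u$ and $P_v$, one ending at $u$ (coloured $\Phi(u)$), the other ending at $v$ (coloured $\Phi(v)$), with the two endpoints incident to $e$ free. Extending is therefore equivalent to producing, for each subpath, a walk in $H$ of the appropriate length starting at the fixed colour. Such walks exist provided $\Phi(u)$ and $\Phi(v)$ each have at least one neighbour in $H$, because one can then simply alternate between that vertex and the chosen neighbour down the remainder of the subpath.

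The main obstacle, then, is verifying that $\Phi(u)$ and $\Phi(v)$ admit $H$-neighbours. The natural argument is as follows: if $u$ has a neighbour $u' \neq v$ in $K$, then $\Phi(u)\Phi(u') \in E(G)$, and the indicator hypothesis supplies a homomorphism $f : I \to H$ with $f(x) = \Phi(u)$; because $I$ has an edge incident to $x$, the vertex $\Phi(u)$ has a neighbour in $H$. The (degenerate) case in which $u$ is a pendant of $K$ can be handled separately by noting that a pendant vertex is isolated in $K - uv$, so its colour under $\Phi$ is unconstrained and may be reassigned to any vertex of $V(H) = V(G)$ known to have an $H$-neighbour; at least one such vertex exists because $I \to H$ and $I$ has an edge. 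The same reasoning applies to $v$, and assembling $\Phi$ with the two extension walks yields $(K \ast I) - e \to H$, completing the argument.
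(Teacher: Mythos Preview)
Your proof is correct and takes essentially the same approach as the paper: both use Lemma~\ref{indicatorconstruction} to obtain a homomorphism $(K-uv)\ast I \to H$ and then deal with the two dangling subpaths of the broken $uv$-copy of $I$ --- you by extending $\Phi$ along walks in $H$, the paper by repeatedly folding the degree-one vertices back into $(K-uv)\ast I$ and composing. One minor point: the detour through ``$\Phi(u)\Phi(u') \in E(G)$'' is unnecessary and not directly justified by the paper's (one-sided) definition of indicator; you can see that $\Phi(u)$ has an $H$-neighbour immediately, since when $u$ has a $K$-neighbour $u' \neq v$ the vertex $u$ already has a neighbour in $(K-uv)\ast I$ (the first internal vertex of the $uu'$-path), and $\Phi$ carries that edge into $H$.
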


\begin{proof}
From Corollary \ref{nohomlemma}, we have that $K \ast I \not \to H$. Now consider any edge $e \in K \ast I$. Then $e$ is contained in a copy of $I$, where this copy of $I$ replaced an edge $e'$ in $K$. By $G$-criticality, $K-e' \to G$. Hence $(K-e') \ast I \to H$ by Lemma \ref{indicatorconstruction}. However, as $I$ is a path, $K \ast I - e$ has vertices of degree $1$ (or is $(K-e') \ast I$, in which case we are done). Now we can map $K \ast I -e \to (K-e') \ast I$ by repeatedly mapping the degree one vertices in the copy of $I$ containing $e$ onto some vertex adjacent to their neighbour. But then as homomorphisms compose, $K \ast I -e \to H$, and hence $K \ast I$ is $H$-critical. 
\end{proof}

As notation let $P_{n}$ denote the path on $n$ vertices. Now to finish the intended goal of the section, we prove that that $P_{4}$ with the endpoints as distinguished vertices is an indicator for $G_{7,2}$ and $C_{7}$. 

\begin{obs}
Let $P_{4}$ be a path with endpoints $x$ and $y$. Then $(P_{4},x,y)$ is an indicator for $G_{7,2}$ and $C_{7}$.
\end{obs}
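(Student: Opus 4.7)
The plan is to verify the two conditions in the definition of indicator directly. The automorphism requirement is immediate: reversing $P_4$ swaps its two endpoints $x$ and $y$. For the substantive condition I observe that a homomorphism $P_4 \to C_7$ sending $x, y$ to prescribed vertices $u, v$ is exactly a walk of length three in $C_7$ from $u$ to $v$, so it suffices to prove that for every edge $uv \in E(G_{7,2})$ there is such a walk.

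To keep the vertex sets of $G$ and $H$ aligned as the indicator definition requires, I would use the identification $C_7 \cong G_{7,3}$, so that both $G_{7,2}$ and $C_7$ share the vertex set $\{0, 1, \ldots, 6\}$ and are invariant under the rotation $\rho \colon i \mapsto i+1 \pmod 7$. By $\rho$-symmetry it suffices to handle the single source $u = 0$. The $G_{7,2}$-neighbourhood of $0$ is $\{2, 3, 4, 5\}$, while the $C_7$-neighbourhood of $0$ is $\{3, 4\}$, and a finite check exhibits walks of length three realising each required endpoint:
\[0-3-6-2, \qquad 0-3-0-3, \qquad 0-3-0-4, \qquad 0-4-1-5.\]
These hit the targets $2, 3, 4, 5$ in turn, completing the verification.

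There is essentially no obstacle here beyond this finite case check; the statement is a short computational identity. The one subtle point is the choice of labeling for $C_7$: with the naive cyclic labeling, the length-three walks from $0$ land in $\{1, 3, 4, 6\}$ rather than the $G_{7,2}$-neighbourhood $\{2, 3, 4, 5\}$, so picking the $G_{7,3}$ labeling is precisely what makes the indicator correspondence manifest and lets the rotation $\rho$ reduce the problem to a single source.
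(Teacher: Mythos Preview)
Your proof is correct with respect to the paper's stated definition of an indicator, and it follows essentially the same route as the paper: exploit rotational symmetry (you make explicit the identification $C_7 \cong G_{7,3}$ that the paper uses tacitly) to reduce to the single source $u = 0$, and then exhibit explicit length-$3$ walks in $C_7$ from $0$ to each of $2,3,4,5$. Your walks are in fact almost identical to the paper's.

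One difference worth flagging: the paper's proof goes on to verify the \emph{non-adjacencies}, namely that there is no length-$3$ walk in $C_7$ from $0$ to any of $0,1,6$. Strictly speaking this is not required by the definition of indicator as the paper states it, so your argument is complete as a proof of the observation. However, that non-adjacency check is what supplies the backward direction of the key biconditional ``$K \to G_{7,2}$ if and only if $K \ast P_4 \to C_7$,'' without which one cannot conclude that $K \ast P_4$ is $C_7$-critical from $K$ being $G_{7,2}$-critical. In effect the paper is proving a slightly stronger statement than the one written; if you intend to use the observation for the same downstream application you will want to add that short extra case check.
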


\begin{proof}
We just need to check the possible homomorphisms of $P_{4}$. Suppose $V(P_{4}) = \{x,x_{1},x_{2},y\}$, with edges $xx_{1},x_{1}x_{2},x_{2}y$. As $G_{7,2}$ is vertex transitive, it suffices to consider the case when we colour $x$ with $0$. The following are $C_{7}$-colourings of $P_{4}$ which give the necessary adjacencies. The colouring where we colour $x$ with $0$, $x_{1}$ with $3$, $x_{2}$ with $6$ and $y$ with $2$. The colouring where we colour $x$ with $0$, $x_{1}$ with $3$, $x_{2}$ with $0$ and $y$ with $3$. The colouring where we colour $x$ with $0$, $x_{1}$ with $4$, $x_{2}$ with $0$, and $y$ with $4$. The colouring  where we colour $x$ with $0$, $x_{1}$ with $4$, $x_{2}$ with $1$ and $y$ with $5$. 

Now we just need to show the non-adjacencies. Suppose that both $x$ and $y$ are coloured $0$. Then both of $x_{1}$ and $x_{2}$ would need to get a colour from $\{3,4\}$, but that is impossible. 

Suppose that $y$ is coloured $1$, then $x_{1}$ must be coloured $4$, as if it is coloured $3$ we cannot colour $x_{2}$ in a way that will be compatible with $y$ being coloured $1$. But if $x_{1}$ is coloured $4$, none of the neighbours of $4$ in $C_{7}$ are adjacent to $1$, and $y$ cannot be coloured $1$. The analysis is the same if $y$ is coloured $6$.
\end{proof}

Therefore we have the following corollary

\begin{cor}
\label{indicatorcriticality}
For any $G_{7,2}$-critical graph $G$, the graph $G \ast P_{4}$ is $C_{7}$-critical. 
\end{cor}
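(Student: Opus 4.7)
The plan is to observe that this corollary is an immediate consequence of the two preceding results in the excerpt, so there is essentially nothing to prove beyond assembling them. First I would invoke the preceding observation, which establishes that $(P_{4},x,y)$, with $x$ and $y$ the two endpoints of $P_{4}$, is an indicator for $G_{7,2}$ and $C_{7}$. This verifies the indicator hypothesis needed below.

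Next I would apply the preceding proposition, which states that whenever $(I,x,y)$ is an indicator for $G$ and $H$, with $I$ a path with at least one edge and $x,y$ its two endpoints, then for any $G$-critical graph $K$ the graph $K \ast I$ is $H$-critical. Taking $G := G_{7,2}$, $H := C_{7}$, $I := P_{4}$ (with its endpoints as the distinguished pair), and $K$ the given $G_{7,2}$-critical graph, all hypotheses are satisfied: $P_{4}$ is a path with at least one edge, and the indicator condition is the content of the previous observation. The conclusion is precisely that $G \ast P_{4}$ is $C_{7}$-critical.

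There is no substantive obstacle here; the only thing to double-check is that the proposition genuinely applies, and the only subtlety is making sure the distinguished vertices of $P_{4}$ are taken to be its two endpoints (so that the proposition's degree-one reduction argument goes through), which matches exactly the setup in the preceding observation. Thus the proof is a single sentence combining the observation and the proposition.
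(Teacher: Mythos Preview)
Your proposal is correct and matches the paper's approach exactly: the paper states this corollary immediately after the observation that $(P_4,x,y)$ is an indicator for $G_{7,2}$ and $C_7$, and it follows directly by plugging this into the preceding proposition about path indicators preserving criticality. There is nothing further to add.
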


Now we can prove the observation.

\begin{obs}
If the $t=3$ case is true in Question \ref{bigquestion}, then for every $G_{7,2}$-critical graph $G$,
\[e(G) \geq \frac{27v(G) - 20}{15}.\]
\end{obs}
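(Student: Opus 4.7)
The plan is a direct application of the indicator construction established just before the statement. Given a $G_{7,2}$-critical graph $G$, I would first invoke Corollary \ref{indicatorcriticality} to replace $G$ with $G \ast P_4$, which is $C_7$-critical. Then I would apply the hypothesized $t=3$ case of Question \ref{bigquestion} to $G \ast P_4$, which gives the lower bound $e(G \ast P_4) \geq \frac{27\, v(G \ast P_4) - 20}{23}$, and finally translate this bound back to $G$ by computing $v(G \ast P_4)$ and $e(G \ast P_4)$ in terms of $v(G)$ and $e(G)$.

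The bookkeeping is the only real content. Replacing an edge $uv \in E(G)$ by a copy of $P_4$ with endpoints identified to $u$ and $v$ introduces two new internal vertices and three edges while deleting one edge. Summing over $E(G)$ yields
\[
v(G \ast P_4) = v(G) + 2\,e(G), \qquad e(G \ast P_4) = 3\,e(G).
\]
Plugging these into the $t=3$ bound gives
\[
3\,e(G) \;\geq\; \frac{27\bigl(v(G) + 2\,e(G)\bigr) - 20}{23},
\]
which after clearing the denominator and collecting the $e(G)$ terms becomes $15\,e(G) \geq 27\,v(G) - 20$, exactly the desired inequality.

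There is really no obstacle to overcome here: once Corollary \ref{indicatorcriticality} is in hand, the proof is a one-line counting argument followed by elementary algebra. The only minor thing worth checking carefully is that the identification of endpoints in the $\ast$-operation is accounted for correctly, so that each copy of $P_4$ contributes exactly two new vertices and a net gain of two edges over the deleted edge; after that, the computation $69\,e(G) - 54\,e(G) \geq 27\,v(G) - 20$ is mechanical.
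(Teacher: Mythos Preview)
Your proposal is correct and follows essentially the same route as the paper: invoke Corollary \ref{indicatorcriticality}, compute $v(G \ast P_4) = v(G) + 2e(G)$ and $e(G \ast P_4) = 3e(G)$, plug into the $t=3$ bound $\frac{27\,v - 20}{23}$, and rearrange. The only difference is presentational; your algebra and the paper's are line-for-line the same.
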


\begin{proof}
Let $G$ be a $G_{7,2}$-critical graph. By Corollary \ref{indicatorcriticality}, the graph $G \ast P_{4}$ is $C_{7}$-critical. Observe that $e(G \ast P_{4}) = 3e(G)$ and $v(G \ast P_{4}) = 2e(G) + v(G)$. Appealing the $t=3$ case of Question \ref{bigquestion}, we have
\[e(G \ast P_{4}) \geq \frac{27v(G \ast P_{4}) -20}{23}.\]
Thus
\[3e(G) \geq \frac{27(2e(G) + v(G)) -20}{23}.\]
Rearranging we have
\[69e(G) \geq 54e(G) +27v(G) -20.\]
Now simplifying we have
\[e(G) \geq \frac{27v(G) -20}{15}\]
as desired. 
\end{proof}

If we apply the same analysis using the bound on the density of $C_{7}$-critical graphs in Theorem \ref{C7critical}, we get a bound on $G_{7,2}$-critical graphs which to the best of my knowledge is the best known (however, this bound does not even beat the Kostochka-Yancey bound for $4$-critical graphs - which suggests many improvements should be possible). 

\begin{cor}[\cite{EvelyneMasters}]
If $G$ is a $G_{7,2}$-critical graph, then 
\[e(G) \geq \frac{17v(G)-2}{11}.\]
\end{cor}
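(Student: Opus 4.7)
The plan is to apply exactly the same indicator-composition strategy used in the conditional observation just above, but now substitute the unconditional Theorem \ref{C7critical} for the conjectural $t=3$ case of Question \ref{bigquestion}. Given a $G_{7,2}$-critical graph $G$, Corollary \ref{indicatorcriticality} hands us a $C_7$-critical graph $G \ast P_4$ for free, so the genuinely colouring-theoretic content is already packaged in that corollary; what remains is counting plus a short check that the exceptional cases in Theorem \ref{C7critical} do not arise.

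First I would record the parameters of the composed graph: replacing each edge $uv \in E(G)$ by an internally disjoint copy of $P_4$ introduces two new internal vertices and three new edges while removing one old edge, so
\[v(G \ast P_4) = v(G) + 2e(G), \qquad e(G \ast P_4) = 3e(G).\]
Since every $G_{7,2}$-critical graph has at least one edge (any edgeless graph admits a $(7,2)$-colouring), $G \ast P_4$ has at least four vertices and thus is not $C_3$. Moreover $e(G \ast P_4) = 3e(G)$ is a multiple of $3$ and hence cannot equal $5$, so $G \ast P_4 \not\cong C_5$ either. Consequently Theorem \ref{C7critical} applies unconditionally to $G \ast P_4$.

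Plugging the bound into the parameter formulas gives
\[3e(G) \;\geq\; \frac{17\bigl(v(G) + 2e(G)\bigr) - 2}{15},\]
which after clearing denominators becomes $45e(G) \geq 34e(G) + 17v(G) - 2$, and rearranging yields the claimed inequality $e(G) \geq \frac{17v(G) - 2}{11}$. I do not foresee any real obstacle here: the only step that needs even minimal attention is verifying that neither of the two small-cycle exceptions in Theorem \ref{C7critical} can occur, and both are immediately ruled out by the trivial vertex-count and divisibility observations above. Everything else is the same mechanical algebra carried out in the previous proof.
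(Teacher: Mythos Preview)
Your proposal is correct and follows exactly the approach the paper intends: it says ``apply the same analysis using the bound on the density of $C_{7}$-critical graphs in Theorem \ref{C7critical}'' and leaves the details implicit, which are precisely the substitution $v(G\ast P_4)=v(G)+2e(G)$, $e(G\ast P_4)=3e(G)$ into Theorem \ref{C7critical} followed by rearrangement. Your explicit verification that $G\ast P_4$ avoids the exceptional graphs $C_3$ and $C_5$ is a small but necessary detail the paper glosses over.
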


\section{Odd cycles in the Gallai-Tree}
\label{Oddcycle}
The point of this section is to prove that the class of $4$-critical graphs with no $(7,2)$-colouring and whose Gallai Tree contains an odd cycle is exactly the class of odd wheels. 

The set up is to first prove a series of list colouring claims, which will allow us to assert that the neighbours of an odd cycle in the Gallai Tree form an independent set. If the independent set has size $1$, then the graph is an odd wheel, and otherwise using a reconfiguration argument we will be able to find a $(7,2)$-colouring. We remark that this set up is similar to the notion of collapsible and cocollapsible sets given in \citep{Oredegree7}, however the additional reconfiguration argument allows us to assert that we only obtain odd wheels (they obtain this for a vertex minimal counterexample).

We start off with some definitions. A \textit{$k$-list-assignment} $L$ is a function which assigns a set of at least $k$ colours to each vertex (however, without loss of generality we will always assume that each list is size exactly $k$). For a vertex $v$, we will denote $L(v)$ as the \textit{list of $v$}. A \textit{$4$-interval-list assignment} $L$ is a $4$-list-assignment where for all $v \in V(G)$, $L(v) \subseteq \{0,1,2,3,4,5,6\}$, and each list has size at least four, and contains an interval of size at least $4$. A list assignment is \textit{uniform} if all vertices receive the same list. An \textit{$L$-colouring} is a proper colouring where each vertex $v$ gets a colour from $L(v)$. An \textit{$L$-$(7,2)$-colouring} is an $L$-colouring which is also a $(7,2)$-colouring. Given a graph $G$ equipped with a list assignment $L$, and a subgraph $H$ of $G$, the list assignment induced by $H$ is simply the list assignment $L$ on the vertices of $H$. A vertex is \textit{precoloured} if $|L(v)| = 1$.

\subsection{$4$-interval-list-colouring paths}

The main point of this subsection is to characterize when we can list colour paths under the assumption that the endpoints have constrained lists. In particular we will characterize when we can colour $P_{n}$ when both of the endpoints have a list of size $2$ that forms and interval, and the internal vertices have lists of size $4$ that form intervals. 

We start with an easy observation. 

\begin{prop}
\label{pathclaim}
Let $P_{n}$ be a path with endpoints $x$ and $y$ (possibly not distinct if $n=1)$. Let $L$ be a list assignment where $x$ is precoloured from $\{0,\ldots,6\}$, and the list assignment induced on $P_{n} -x$ is a $4$-interval list assignment. Then $P_{n}$ is $L$-$(7,2)$-colourable. 
\end{prop}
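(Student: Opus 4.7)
I will proceed by induction on $n$, the number of vertices of the path. The base case $n=1$ is trivial: the single vertex $x=y$ is precoloured, so the map is already an $L$-$(7,2)$-colouring.

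For the inductive step, write the path as $x = v_0, v_1, \ldots, v_{n-1} = y$ and let $c_0$ denote the precolour of $v_0$. The key combinatorial observation is this: in $G_{7,2}$, for any colour $c \in \{0,\ldots,6\}$ the neighbourhood $N_{G_{7,2}}(c) = \{c+2, c+3, c+4, c+5\}$ (indices mod $7$) is a cyclic interval of size exactly $4$. Since $L(v_1)$ is also a cyclic interval of size at least $4$ in $\{0,\ldots,6\}$, and any two subsets of a $7$-element set of size at least $4$ must intersect (as $4+4 > 7$), the intersection
\[
L(v_1) \cap N_{G_{7,2}}(c_0)
\]
is nonempty. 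Pick any colour $c_1$ from this intersection and assign it to $v_1$.

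Now consider the subpath $P' = v_1 v_2 \cdots v_{n-1}$ equipped with the list assignment $L'$ defined by $L'(v_1) = \{c_1\}$ and $L'(v_i) = L(v_i)$ for $i \geq 2$. Then $v_1$ is precoloured from $\{0,\ldots,6\}$ and the list assignment induced on $P' - v_1$ is the same $4$-interval list assignment inherited from $L$. By the inductive hypothesis applied to $P'$ (which has strictly fewer vertices), $P'$ admits an $L'$-$(7,2)$-colouring; combined with the colour $c_0$ on $v_0$ this gives an $L$-$(7,2)$-colouring of $P_n$, as the only new constraint is the edge $v_0 v_1$, which is satisfied by construction.

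\textbf{Main obstacle.} There is essentially none: the proof hinges solely on the elementary pigeonhole fact that two cyclic intervals of size $\geq 4$ inside a $7$-element cycle always intersect, combined with the observation that neighbourhoods in $G_{7,2}$ are themselves intervals of size $4$. The only thing to be careful about is the degenerate case $n = 1$, which is handled directly by the base case.
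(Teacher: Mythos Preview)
Your proof is correct and follows essentially the same approach as the paper: induction on $n$, with the inductive step using the pigeonhole principle (two size-$4$ subsets of a $7$-set must intersect) to find an admissible colour for the neighbour of the precoloured endpoint, then recursing on the shorter path. Your write-up is slightly more explicit about why the intersection is nonempty, but the argument is identical in substance.
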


\begin{proof}
We proceed by induction on $n$. If $n=1$, then the precolouring is an $L$-$(7,2)$-colouring. So $n \geq 2$.
Let $x'$ be the neighbour of $x$. By the pigeon hole principle, $|L(x') \cap N_{G_{7,2}}(L(x))| \geq 1$. Now colour $x'$ with some colour from $L(x') \cap N_{G_{7,2}}(L(x))$ and delete $x$. If $n=2$ then we are done, and otherwise the result follows by induction. 
\end{proof}

Note that it is possible to satisfy the hypothesis of the above claim and have exactly one $L$-$(7,2)$-colouring.
Now instead of precolouring one end of the path, we will restrict both endpoints of the path but not as severely.

\begin{lemma}
\label{sizetwoclaimsimplified}
Let $P_{n}$ be a path with endpoints $x$ and $y$. Suppose $L$ is a list assignment such that $P_{n} - x -y$ induces a $4$-interval-list assignment, $L(x)$ and $L(y)$ both forming intervals modulo $7$,  $|L(x)| \geq 2$ and $|L(y)| \geq 3$. Then there exists an $L$-$(7,2)$-colouring. 
\end{lemma}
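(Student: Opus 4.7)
The plan is to proceed by induction on $n$, after first reducing to the case $|L(x)| = 2$ and $|L(y)| = 3$ by restricting each list to a sub-interval of the required size (any coloring under the smaller lists still works for the original). Write $L(x) = \{a, a+1\}$ and $L(y) = \{c_0, c_0+1, c_0+2\}$, with all arithmetic modulo $7$. Throughout I use the key fact that the non-neighbors of a color $b$ in $G_{7,2}$ are exactly the $3$-interval $\{b-1, b, b+1\}$; it follows that the set of $b \in L(y)$ with $N_{G_{7,2}}(b) \cap L(x) = \emptyset$ is $\{a, a+1\}$, and the set of $b$ with $N_{G_{7,2}}(b) \cap L(y) = \emptyset$ is the singleton $\{c_0 + 1\}$.

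For the base case $n = 2$, I need an edge of $G_{7,2}$ between $L(x)$ and $L(y)$; at most $|\{a, a+1\}| = 2$ of the three colors of $L(y)$ are bad, so a good $b \in L(y)$ exists. For $n = 3$, the middle vertex has a $4$-interval list, with at most $2 + 1 = 3$ bad colors (from the $L(x)$ and $L(y)$ sides respectively), so at least one good choice remains and extends to valid colors on both ends.

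For the inductive step $n \geq 4$, let $v$ denote the neighbor of $y$ and let $L'(v) = L(v) \setminus \{c_0 + 1\}$ be the set of colors at $v$ admitting a valid extension to $y$. I case-split on the position of $c_0 + 1$ within the $4$-interval $L(v) = [v_0, v_0 + 3]$. In Case A ($c_0 + 1 \notin L(v)$), $L'(v) = L(v)$ is a $4$-interval and induction applies to $P_{n-1}$ with $v$ as the new $y$-endpoint (list size $4 \geq 3$). In Case B.1 ($c_0 + 1 \in \{v_0, v_0 + 3\}$), $L'(v)$ is a $3$-interval, and induction again applies. In Case B.2 ($c_0 + 1 \in \{v_0 + 1, v_0 + 2\}$), $L'(v)$ is a $3$-element subset of $L(v)$ whose smallest containing interval still has size $4$; I then perform a two-vertex reduction by absorbing both $y$ and $v$ into $u = v_{n-3}$, setting $L''(u) = \{d \in L(u) : L'(v) \cap [d+2, d+5] \neq \emptyset\}$. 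Since the complement of $[d+2, d+5]$ modulo $7$ equals $\{d-1, d, d+1\}$ (a $3$-interval of size $3$), it cannot contain the $3$-element set $L'(v)$, whose smallest containing interval has size $4$; hence $L''(u) = L(u)$ is again a $4$-interval, and induction on $P_{n-2}$ closes the case.

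The main obstacle is Case B.2: the natural one-step reduction produces a list that is no longer an interval, so the inductive hypothesis does not apply directly. The resolution is the two-step span argument above---a $4$-interval minus one interior point cannot fit into any $3$-interval, so looking two vertices ahead recovers a full $4$-interval at the new endpoint and the induction closes.
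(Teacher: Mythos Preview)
Your proof is correct. Both arguments proceed by induction on $n$, but they attack the path from opposite ends and in different styles.

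The paper works from the $x$-endpoint (the size-$2$ list). It tries to colour $x$ with some $c$ so that the neighbour $u$ retains an interval of size $\geq 2$; when this fails, $L(u)$ is pinned down to a single specific interval (namely $\{a-1,a,a+1,a+2\}$), and the paper then looks one vertex further and does an explicit case analysis on $L(v)$ to find a colouring of $x,u$ that leaves $v$ with a size-$\geq 2$ interval. The argument is concrete and short but relies on naming particular intervals.

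Your argument works from the $y$-endpoint (the size-$3$ list) and is more structural: you identify exactly which colours at the next vertex fail to extend back to $y$ (a single colour $c_0+1$), then case-split on whether removing that colour from the $4$-interval $L(v)$ still leaves an interval. The key novelty is Case~B.2: when the deletion breaks the interval, you observe that the surviving $3$-set still spans a window of length $4$ and therefore cannot be contained in any non-neighbour set $\{d-1,d,d+1\}$, so passing to $u$ loses \emph{no} colours and the induction closes on $P_{n-2}$. This span argument replaces the paper's explicit enumeration and would generalise more readily.

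One minor point: you should state the trivial $n=1$ base case (the paper does), and in Case~B.2 it is worth remarking explicitly that for $n=4$ the vertex $u$ is still an internal vertex (hence carries a $4$-interval list), so the reduction to $P_{n-2}=P_2$ is legitimate.
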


\begin{proof}
We proceed by induction on $n$. If $n=1$, the claim is trivial. If $n = 2$, then let $c \in L(x)$. Unless $L(y) = \{c-1,c,c+1\}$, then we can colour $x$ with $c$ and extend the colouring. If $L(y) = \{c-1,c,c+1\}$, colour $x$ with a colour in $L(x) -c$, and extend the colouring in any fashion. Now we can assume that $n \geq 3$. Let $u$ be the neighbour of $x$. Observe that if there is a $c \in L(x)$ such that not all of $\{c-1,c,c+1\}$ are contained in $L(u)$, we can colour $x$ with $c$, delete $x$ and apply induction. Without loss of generality suppose that $L(x)= \{0,1\}$. Then by the above observation, unless $L(u) = \{6,0,1,2\}$, we can colour $x$ with either $0$ or $1$ and apply induction. Thus we may assume $L(u) = \{6,0,1,2\}$. Now let $v$ be the neighbour of $u$ which is not $x$. If $L(v)$ does not contain all of $\{1,2,3\}$, then we can colour $x$ with $0$, $u$ with $2$, and the set of available colours for $v$ has size at least $2$, so we can apply induction (or simply colour $v$ and finish the colouring if $n = 3$). Thus $L(v)$ contains all of $\{1,2,3\}$. Therefore $L(v)$ is one of three possible lists, $\{0,1,2,3\}$, $\{1,2,3,4\}$, or $\{1,2,3\}$. Regardless of which list $L(v)$ is, colour $x$ with $1$ and $u$ with $6$. In all cases, either we can finish the colouring of the path, or the set of available colours for $v$ is at least $2$, and we can apply induction.  
\end{proof}

The above lemma is best possible in the sense that we cannot make both endpoints have list size $2$, even if they form an interval. To see this, consider the following assignment of $P_{3}$ with vertices $x,y,z$ where we have edges $xy$ and $yz$. Let $L(x) = \{0,1\}$, $L(y) = \{5,6,0,1\}, L(z) = \{5,6\}$. It is easy to see there is no $L$-$(7,2)$-colouring.

\subsection{$4$-interval list colouring cycles}

Now we will turn our focus onto proving list colouring claims of odd cycles (or in some cases cycles). We now give a definition which is cooked up just to be able to apply Proposition \ref{pathclaim}. 

\begin{definition}
A $4$-interval-list-assignment $L$ of $C_{n}$ is \textit{safe} if for some edge $uv \in E(C_{n})$, there is a colour $c \in L(u)$ such that none of $\{c-1,c,c+1\}$ reduced modulo $7$ are in $L(v)$.
\end{definition}

\begin{obs}
\label{safeclaim}
Every safe $4$-interval-list-assignment of $C_{n}$ admits an $L$-$(7,2)$-colouring. 
\end{obs}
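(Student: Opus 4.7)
The plan is to use the ``escape'' colour supplied by the safety hypothesis to reduce the problem on $C_n$ to a precoloured path, and then invoke Proposition \ref{pathclaim}. By safety, fix an edge $uv \in E(C_n)$ and a colour $c \in L(u)$ such that $\{c-1, c, c+1\}$ (taken modulo $7$) is disjoint from $L(v)$. The crucial observation is that in $G_{7,2}$ the non-neighbours of $c$ are exactly $\{c-1, c, c+1\}$, so the $G_{7,2}$-neighbourhood of $c$ contains all of $L(v)$. Consequently, if we commit to giving $u$ the colour $c$, then no matter which colour from $L(v)$ eventually lands on $v$, the edge $uv$ will be automatically satisfied in $G_{7,2}$.

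With $u$ precoloured by $c$, consider the path $P := C_n - uv$ with endpoints $u$ and $v$. The induced list assignment on $V(P) \setminus \{u\}$ is still a $4$-interval-list assignment, as this property is inherited from $L$ on $C_n$. Thus $P$ satisfies the hypotheses of Proposition \ref{pathclaim} with $x = u$, and we obtain an $L$-$(7,2)$-colouring of $P$ extending the precolouring of $u$. Putting back the edge $uv$ then produces the desired $L$-$(7,2)$-colouring of $C_n$.

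There is essentially no obstacle here; the definition of \emph{safe} was engineered precisely so that fixing $u := c$ imposes no constraint on $v$, whereupon the result follows immediately from Proposition \ref{pathclaim}. The only point one needs to verify is that the non-neighbours of $c$ in $G_{7,2}$ are exactly $\{c-1, c, c+1\} \pmod 7$, which is immediate from the definition of $G_{p,q}$ with $p=7$ and $q=2$.
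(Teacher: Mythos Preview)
Your proof is correct and follows essentially the same approach as the paper: precolour $u$ with the safe colour $c$, delete the edge $uv$, apply Proposition~\ref{pathclaim} to the resulting path, and observe that the edge $uv$ is automatically satisfied since $L(v)$ avoids $\{c-1,c,c+1\}$. Your write-up is slightly more explicit in identifying the non-neighbours of $c$ in $G_{7,2}$, but the argument is otherwise identical.
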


\begin{proof}
Pick an edge $uv \in E(C_{n})$ such that there is a colour $c \in L(u)$ where none of $\{c-1,c,c+1\}$ reduced modulo $7$ are in $L(v)$. Now consider $C_{n}-uv$. Colour $u$ with $c$. Then we satisfy the conditions of Proposition \ref{pathclaim}, so consider any $L$-$(7,2)$-colouring ensured by the claim. By design, $u$ gets colour $c$, and $v$ gets some colour that is not $c-1,c$ or $c+1$, and hence we have a $L$-$(7,2)$-colouring of $C_{n}$.
\end{proof}

There is a harder case we can deal with. 

\begin{definition}
A $4$-interval-list-assignment $L$ of $C_{n}$ is nearly safe if there exists a vertex $v$ with neighbours $v_{1},v_{2}$ where $L(v_{1}) = L(v_{2})$ and $L(v)$ shares at most $3$ colours with $L(v_{1})$. 
\end{definition}

\begin{prop}
Let $L$ be a $4$-interval-list-assignment of $C_{n}$ which is nearly safe. Then we can find a $L$-$(7,2)$-colouring.
\end{prop}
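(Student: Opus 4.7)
The plan is to use the vertex $v$ guaranteed by the near-safety hypothesis as a pivot: assign $v$ a carefully chosen colour, and then extend via Lemma \ref{sizetwoclaimsimplified} to the rest of the cycle, which becomes a path from $v_1$ to $v_2$. Since $L(v_1) = L(v_2)$ are both intervals of size $4$ in $\{0,\dots,6\}$, and the near-safety hypothesis $|L(v) \cap L(v_1)| \leq 3$ forces $L(v) \neq L(v_1)$, we can pick a colour $c \in L(v) \setminus L(v_1)$. Normalize so that $L(v_1) = L(v_2) = \{a, a+1, a+2, a+3\}$ (all indices modulo $7$); any such $c$ then lies in $\{a+4, a+5, a+6\}$.

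The key step is a small case check: colour $v$ with $c$ and inspect the surviving lists $L(v_i) \cap N_{G_{7,2}}(c)$ at $v_1$ and $v_2$, using $N_{G_{7,2}}(c) = \{c-3, c-2, c+2, c+3\}$. A direct computation shows that the surviving list at each of $v_1, v_2$ is $\{a, a+1, a+2\}$ when $c = a+4$, is all of $\{a, a+1, a+2, a+3\}$ when $c = a+5$, and is $\{a+1, a+2, a+3\}$ when $c = a+6$. In every case the surviving list is an interval of size at least $3$.

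To finish, observe that $C_n - v$ is a path $P$ from $v_1$ to $v_2$. Its interior (possibly empty, in which case we are in the $n = 3$ situation where $v_1 v_2$ is an edge) inherits the original $4$-interval list assignment $L$, while the endpoints $v_1$ and $v_2$ now carry intervals of size at least $3$. Hence the hypotheses of Lemma \ref{sizetwoclaimsimplified} are satisfied (with either endpoint in the role of $x$), and $P$ admits an $L$-$(7,2)$-colouring. Combined with the assignment $v \mapsto c$, this yields an $L$-$(7,2)$-colouring of $C_n$.

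The only obstacle is the three-case verification above; there is no subtlety in passing from $n = 3$ (where $P$ has no interior) to $n \geq 4$, since Lemma \ref{sizetwoclaimsimplified} already handles $P_2$, and the surviving interval lists of size $\geq 3$ comfortably beat the ``size $\geq 2$ and size $\geq 3$'' thresholds in its hypothesis.
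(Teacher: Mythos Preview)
Your proof is correct and in fact streamlines the paper's argument. The paper splits into three cases according to $|L(v)\cap L(v_1)|\in\{1,2,3\}$: when the overlap is $1$ or $2$ it shows the assignment is actually safe and invokes Observation~\ref{safeclaim}, and only in the overlap-$3$ case does it colour $v$ (with the unique element of $L(v)\setminus L(v_1)$) and apply Lemma~\ref{sizetwoclaimsimplified} to the residual path. Your argument handles all three cases uniformly: you observe that \emph{any} $c\in L(v)\setminus L(v_1)$ leaves an interval of size $\geq 3$ at both $v_1$ and $v_2$, so Lemma~\ref{sizetwoclaimsimplified} applies directly without detouring through the safe case. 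This is a genuine simplification---the safe/nearly-safe split becomes unnecessary for this proposition---though the paper's route has the minor expository benefit of reusing Observation~\ref{safeclaim}.
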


\begin{proof}
Let $v$ be a vertex with neighbours $v_{1}$ and $v_{2}$ where $L(v_{1}) = L(v_{2})$ and $L(v)$ shares at most $3$ colours with $L(v_{1})$. Without loss of generality suppose that $L(v) = \{0,1,2,3\}$. We consider cases.

 If $L(v)$ shares exactly $1$ colour with $L(v_{1})$, then either $L(v_{1}) = \{3,4,5,6\}$, or $L(v_{1}) = \{1,6,5,0\}$, which implies that $L$ is a safe list-assignment. This case follows from Observation \ref{safeclaim}. 

If $L(v)$ shares exactly $2$-colours with $L(v_{1})$, then $L(v_{1}) = \{2,3,4,5\}$ or $L(v_{1}) = \{1,0,6,5\}$. Again, either of these lists imply that $L$ is safe, and thus the result follows from Observation \ref{safeclaim}.

 Therefore we can assume that $L(v_{1}) = \{1,2,3,4\}$ or $L(v_{1}) = \{6,0,1,2\}$. Without loss of generality assume that $L(v_{1}) = \{1,2,3,4\}$. Now colour $v$ with $0$, and remove $1$ from the lists of $v_{1}$ and $v_{2}$, and delete $v$. The remaining path satisfies the conditions of Proposition \ref{sizetwoclaimsimplified}. Therefore we can find a $L$-$(7,2)$-colouring of the path which extends to an $L$-$(7,2)$-colouring of the entire graph, as desired.
\end{proof}

Finally we can cover the remaining non-uniform cases.

\begin{prop}
Let $L$ be a $4$-interval-list-assignment of $C_{2k+1}$ which is not uniform, not safe and not nearly safe. Then there is an $L$-$(7,2)$-colouring of $C_{2k+1}$.
\end{prop}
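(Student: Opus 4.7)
The plan is to perform a structural analysis of $L$ via a lift, then split into two cases based on whether the cycle contains a "flat" edge, handling each case by either explicit construction or reduction to a previously established path-colouring lemma.

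First, using the not-safe hypothesis, I lift the interval starts $a_1, \dots, a_n \in \mathbb{Z}/7$ to integers $\tilde{a}_1, \dots, \tilde{a}_n$ so that each consecutive difference $d_i := \tilde{a}_{i+1} - \tilde{a}_i$ lies in $\{-1, 0, 1\}$; this is possible since not-safe forces $a_{i+1} \in \{a_i-1, a_i, a_i+1\} \pmod 7$ at every edge. The not-nearly-safe hypothesis is then equivalent to the statement that $(d_i, d_{i+1}) \neq (\pm 1, \mp 1)$ for every $i$: any two adjacent non-zero differences must share their sign. Since $L$ is non-uniform, at least one $d_i$ is non-zero.

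Case 1 is when every $d_i$ is non-zero. By the no-immediate-reversal constraint all $d_i$ share sign, and WLOG $d_i = +1$ for all $i$. Then $n = \sum d_i$ is a positive multiple of $7$, so $n = 7m$ for some odd $m \geq 1$. For $n = 7$ I exhibit an explicit $(7,2)$-colouring: with $\tilde{a}_i = i - 1$, the choice $f(v_i) = (i - 1 + b_i) \bmod 7$ with $(b_1, \dots, b_7) = (0, 2, 3, 0, 1, 2, 3)$ produces values $(0, 3, 5, 3, 5, 0, 2)$ whose successive differences modulo $7$ all lie in $\{2, 3, 4, 5\}$, so it is a valid $L$-$(7,2)$-colouring. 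For general $n = 7m$, the same cyclic pattern of jumps $b_{i+1} - b_i \in \{-3, 1, 2, 3\}$, summing to $0$ over each run of $7$ steps, repeats $m$ times.

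Case 2 is when some $d_j = 0$, so there is a flat edge $v_j v_{j+1}$ with $L(v_j) = L(v_{j+1}) = [a, a+3]$. I precolour the two endpoints with a compatible pair $(c_1, c_2) \in [a,a+3]^2$ having $|c_1 - c_2| \in [2, 5]$; the six such pairs are $(a, a+2), (a, a+3), (a+1, a+3)$ and their reverses. I then select the pair so that the restricted lists $L(v_{j-1}) \cap N_{G_{7,2}}(c_1)$ and $L(v_{j+2}) \cap N_{G_{7,2}}(c_2)$ remain intervals of size at least $2$ and $3$, respectively (possibly after swapping the roles of the two endpoints), and apply Lemma \ref{sizetwoclaimsimplified} to the path $v_{j+1}, v_{j+2}, \dots, v_{j-1}, v_j$ from the precoloured ends inward. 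The not-safe hypothesis constrains $a_{j-1}, a_{j+2} \in \{a-1, a, a+1\}$, so there are only finitely many configurations (indexed by $d_{j-1}$ and $d_{j+1}$) to verify.

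The main obstacle is the sub-case analysis in Case 2: in certain configurations — most delicately when a flanking edge is itself flat ($d_{j-1} = 0$ or $d_{j+1} = 0$) and has an identical list — every admissible pair $(c_1, c_2)$ leaves a singleton on one of the flanking vertices, so I must propagate this forced colour one step further along the cycle before the hypotheses of Lemma \ref{sizetwoclaimsimplified} become available. Since the cycle is finite and non-uniformity together with the no-immediate-reversal condition guarantees that propagation eventually reaches a vertex where $d_i \neq 0$ flanks $d_{i+1} \neq 0$ of the same sign (giving the required size-$2$ interval after restriction), this propagation terminates and yields the desired colouring.
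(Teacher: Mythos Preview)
Your overall strategy differs from the paper's. The paper works locally: it picks an edge $vv_1$ with $L(v)\neq L(v_1)$, uses not-nearly-safe to pin down $L(v_2)$, colours $v$ and $v_1$ by hand (after possibly looking one vertex further to $v_{1,1}$), and applies Lemma~\ref{sizetwoclaimsimplified} directly. There is no lift to $\mathbb{Z}$, no global Case~1/Case~2 split, and no explicit cyclic construction. Your Case~1 construction for $n=7m$ is correct and is a genuinely different (and pleasingly concrete) argument that the paper avoids entirely.

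Your Case~2, however, contains a false assertion. You claim that propagation must reach a position where two consecutive $d_i$'s are nonzero of the same sign. This is not true: for $n=21$ take the periodic sequence $d=(1,0,0,1,0,0,\dots,1,0,0)$. Then every $d_i\in\{0,1\}$, $\sum d_i=7\equiv0\pmod 7$, the assignment is not uniform, not safe, and not nearly safe, yet no two consecutive $d_i$ are both nonzero. The no-immediate-reversal condition $(d_i,d_{i+1})\neq(\pm1,\mp1)$ simply does not force any run of nonzero differences. Propagation \emph{does} in fact terminate after at most two forced steps, but for a reason you have not identified: a forced colour shifts by $\pm2$ per step while the list window shifts by at most $\pm1$, so the offset of the forced colour inside its list drifts until the next restriction opens up to size $\ge 2$. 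You would need to verify this explicitly for each starting pair $(c_1,c_2)$ and each local $d$-pattern. You must also ensure that one endpoint retains an interval of size $\ge 3$ (not merely $\ge 2$) so that Lemma~\ref{sizetwoclaimsimplified} applies; when $(d_{j-1},d_{j+1})=(0,0)$ every admissible $(c_1,c_2)$ leaves only sizes $(2,2)$, so you must either choose the flat edge to be adjacent to a nonzero difference (possible by non-uniformity, but you do not say this) or treat this sub-case separately.
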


\begin{proof}
As $L$ is not uniform, let $vv_{1} \in E(C_{2k+1})$ such that $L(v) \neq L(v_{1})$. As $L$ is not safe, $L(v)$ and $L(v_{1})$ share three colours. Without loss of generality, assume that $L(v) = \{0,1,2,3\}$. Let $v_{2}$ be the other neighbour of $v$ that is not $v_{1}$. As $L$ is not nearly-safe, we can assume without loss of generality that $L(v_{1}) = \{6,0,1,2\}$ and $L(v_{2}) = \{1,2,3,4\}$. First suppose that $v,v_{1},v_{2}$ form a triangle. Then colour $v$ with $0$, $v_{1}$ with $2$ and $v_{2}$ with $4$.

 So we can assume we have at least five vertices. Let $v_{1,1}$ be the neighbour of $v_{1}$ that is not $v$. Initially suppose that $L(v_{1,1}) \neq \{1,2,3,4\}$ or $\{0,1,2,3\}$. Then colour $v$ with $0$ and $v_{1}$ with $2$. Then the set of available colours at $v_{2}$ has size $3$, and the set of available colours at $v_{1,1}$ is at least two. Therefore by Proposition \ref{sizetwoclaimsimplified} there is an $L$-$(7,2)$-colouring. 
  
  So we can assume that $L(v_{1,1}) = \{1,2,3,4\}$ or $\{0,1,2,3\}$. Regardless of these two lists, colour $v$ with $1$ and colour $v_{1}$ with $6$. Then the set of available colours for $v_{1,1}$ is at least $3$, and the set of available colours for $v_{2}$ is exactly $2$. Thus by Proposition \ref{sizetwoclaimsimplified} we have a $L$-$(7,2)$-colouring. 
\end{proof}

Now we observe that uniform lists do not admit an $L$-$(7,2)$-colouring of $C_{2k+1}$. 

\begin{obs}
\label{uniformlistsdonotwork}
Any uniform $4$-interval-list assignment $L$ of $C_{2k+1}$ does not admit a $L$-$(7,2)$-list colouring. 
\end{obs}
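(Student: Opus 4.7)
The plan is to convert the statement into a question about graph homomorphisms to a small fixed graph. First I would use the vertex-transitivity of $G_{7,2}$ under the cyclic shift $x \mapsto x+1 \pmod{7}$ to assume, without loss of generality, that $L(v) = \{0,1,2,3\}$ for every vertex $v$ of $C_{2k+1}$. Under this reduction, an $L$-$(7,2)$-colouring of $C_{2k+1}$ is exactly a homomorphism from $C_{2k+1}$ to the subgraph $H$ of $G_{7,2}$ induced on $\{0,1,2,3\}$.

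The second step is to compute $H$ explicitly. A pair $\{i,j\} \subseteq \{0,1,2,3\}$ is an edge of $G_{7,2}$ iff $2 \leq |i-j| \leq 5$, so the edges of $H$ are exactly $\{0,2\}$, $\{0,3\}$, and $\{1,3\}$. Thus $H$ is the path $2{-}0{-}3{-}1$ on four vertices; in particular, $H$ is bipartite.

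The conclusion is then immediate: since $C_{2k+1}$ is an odd cycle it is not bipartite, so it admits no homomorphism to the bipartite graph $H$, and hence no $L$-$(7,2)$-colouring exists. There is really no obstacle here beyond the small finite check; the key observation is simply that any four consecutive colours of $G_{7,2}$ induce a bipartite subgraph (a $P_{4}$), which is incompatible with mapping an odd cycle in.
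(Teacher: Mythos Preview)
Your proof is correct and follows essentially the same approach as the paper: reduce by symmetry to the list $\{0,1,2,3\}$, observe that the subgraph of $G_{7,2}$ induced on these four vertices is bipartite, and conclude that an odd cycle cannot map into it. Your version is slightly more explicit in identifying the induced subgraph as a $P_{4}$, but the argument is the same.
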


\begin{proof}
Without loss of generality we can assume that $L$ assigns the colours $0,1,2,$ and $3$ to each vertex. Suppose $f$ is an $L$-$(7,2)$-colouring of $C_{2k+1}$. Then the image of $f$ is a subgraph of the graph in $G_{7,2}$ induced on the vertices $0,1,2$ and $3$. However, this is bipartite, which by composing homomorphisms, would imply that $C_{2k+1}$ is bipartite, a contradiction. 
\end{proof}

Putting it all together, we have

\begin{lemma}
\label{oddcyclelists}
A $4$-interval-list-assignment $L$ of $C_{2k+1}$ admits an $L$-$(7,2)$-colouring if and only if $L$ is not uniform. 
\end{lemma}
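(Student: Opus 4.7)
The plan is to combine Observation \ref{uniformlistsdonotwork} with the three propositions established earlier in the subsection, since together they essentially cover every possible case of a $4$-interval-list-assignment on $C_{2k+1}$.

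First I would handle the ``only if'' direction, which is immediate from Observation \ref{uniformlistsdonotwork}: any uniform $4$-interval list assignment fails to have an $L$-$(7,2)$-colouring because the four available colours induce a bipartite subgraph of $G_{7,2}$, and $C_{2k+1}$ is not bipartite.

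For the ``if'' direction, assume $L$ is not uniform. The strategy is a trichotomy on the structure of $L$. Either (i) $L$ is safe, in which case Observation \ref{safeclaim} directly yields an $L$-$(7,2)$-colouring by precolouring the appropriate endpoint of the edge witnessing safety and applying the path extension in Proposition \ref{pathclaim}; or (ii) $L$ is not safe but is nearly safe, in which case the second proposition in the subsection produces an $L$-$(7,2)$-colouring via a careful choice of colour at the vertex $v$ whose two neighbours share a list, reducing to Proposition \ref{sizetwoclaimsimplified} on the remaining path; or (iii) $L$ is neither safe nor nearly safe, in which case the third proposition applies, again reducing to a two-end-constrained path and invoking Proposition \ref{sizetwoclaimsimplified}. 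Since these three cases are exhaustive under the assumption that $L$ is non-uniform, the proof is complete.

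The main obstacle is really just verifying that the trichotomy is exhaustive — there is no further case analysis to do, since the heavy combinatorial work has already been carried out in the three preceding propositions. I would state this exhaustiveness explicitly: if $L$ is not uniform then some edge $uv$ of $C_{2k+1}$ has $L(u)\ne L(v)$, and depending on whether the two lists overlap in $\leq 2$ or exactly $3$ colours, and on whether any vertex has two neighbours with identical lists differing from its own, we fall into exactly one of the safe/nearly safe/remaining categories. Thus the lemma reduces to assembling the three cited results, and no new argument is required beyond recording this case split.
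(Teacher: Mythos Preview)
Your proposal is correct and matches the paper's approach exactly: the paper simply writes ``Putting it all together, we have'' before stating the lemma, since Observation \ref{uniformlistsdonotwork} gives the ``only if'' direction and the three preceding propositions (safe, nearly safe, and the remaining non-uniform case) exhaust the ``if'' direction by logical necessity. One minor note: your final paragraph's justification of exhaustiveness via overlap sizes is unnecessary, since the three cases partition non-uniform assignments tautologically (safe; not safe but nearly safe; neither safe nor nearly safe).
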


Now we can prove the main result of this section.

\begin{thm}
Let $G$ be a $4$-critical graph with no $(7,2)$-colouring, and whose Gallai Tree contains an odd cycle. Then $G$ is isomorphic to an odd wheel. 
\end{thm}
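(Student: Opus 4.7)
The plan is to analyze the interaction between the odd cycle $C = v_1 v_2 \cdots v_{2k+1}$ of $D_3(G)$ and the rest of $G$. Because $C$ is a block of the Gallai tree and each $v_i$ has total degree three in $G$, each $v_i$ has exactly one neighbor outside $V(C)$, which I will denote $u_i$. A chord of $C$ would be an edge of $D_3(G)$ violating the block structure, so $u_i \in V(G) \setminus V(C)$ for every $i$; write $U := \{u_1,\ldots,u_{2k+1}\}$.

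First I would delete $V(C)$ and work with $G' := G - V(C)$, which is 3-colorable because it is a proper subgraph of the 4-critical $G$. Any 3-coloring $f \colon V(G') \to \{0,2,4\}$ induces a 4-interval list assignment on $C$ via $L(v_i) := N_{G_{7,2}}(f(u_i))$. By Lemma~\ref{oddcyclelists}, $L$ extends to a $(7,2)$-coloring of $C$ compatible with $f$ unless $L$ is uniform, which happens precisely when all $u_i$ receive the same color under $f$. Since $G$ admits no $(7,2)$-coloring, every 3-coloring of $G'$ must assign the same color to all of $U$; in particular, $U$ is an independent set of $G$.

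The argument then splits on $|U|$. If $|U|=1$, write $u$ for the unique element; then $u$ is adjacent to every $v_i$, so the subgraph on $V(C)\cup\{u\}$ contains a copy of $W_{2k+2}$. Since $W_{2k+2}$ is 4-chromatic and $G$ is 4-critical, we conclude $G = W_{2k+2}$, which is an odd wheel by the paper's convention because $2k+2$ is even.

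The hard case is $|U|\geq 2$, which I aim to rule out by producing a 3-coloring of $G'$ in which two elements of $U$ receive different colors, contradicting the uniformity conclusion. Starting from the coloring $f$ with $f(u_i)=0$ for all $i$, the first attempt is a Kempe swap: if some $(0,c)$-Kempe component in $G'$ (for $c \in \{2,4\}$) contains an element of $U$ but not all of $U$, swapping it yields the desired non-uniform coloring. The central obstacle is to exclude the scenario in which every such Kempe component contains the entirety of $U$. My plan here is to form the quotient $\hat G := G/U$ obtained by identifying all of $U$ to a single vertex $u^\ast$: since $W_{2k+2}$ embeds into $\hat G$ we have $\chi(\hat G)\geq 4$, so $\hat G$ contains a 4-critical subgraph $\hat H$. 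If $\hat H$ avoids $u^\ast$, it embeds into $G'$ and contradicts $\chi(G')\leq 3$. If $\hat H$ contains $u^\ast$, lifting $\hat H$ back to $G$ splits $u^\ast$ into the independent set $U$, and I expect this lift to be 3-chromatic in a way that separates some $u_i$ from $u_j$; combining such a 3-coloring with $f$ on the remaining vertices of $G'$ should yield a 3-coloring of $G'$ that contradicts the standing conclusion. Verifying that the lift is always 3-colorable and that the partial colorings merge consistently into a global 3-coloring of $G'$ is the step I expect to require the most care.
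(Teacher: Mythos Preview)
Your setup through the uniformity conclusion and the $|U|=1$ case is correct and matches the paper. The gap is in the $|U|\ge 2$ case, where you insist on producing a \emph{3-colouring} of $G'$ that is non-monochromatic on $U$. Two concrete problems: First, in your quotient argument it can happen that $\hat H=\hat G$ (namely when $\hat G$ is itself $4$-critical, which forces $\hat G=W_{2k+2}$); then the lift $H$ equals $G$ and is not $3$-colourable, so your ``lift is $3$-chromatic'' expectation fails as stated. Second, even when $H\subsetneq G$ and you obtain a $3$-colouring $g$ of $H$ that separates $U$, there is no reason the restriction of $g$ to $V(H)\cap V(G')$ and the original $f$ on $V(G')\setminus V(H)$ are compatible across edges of $G'$ joining the two parts; ``merging'' them into a global proper $3$-colouring of $G'$ is exactly the unjustified step you flagged, and it is genuinely nontrivial.

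The paper avoids all of this by not staying in the $3$-colouring world. Starting from the $3$-colouring $f$ of $G'$ with every $u_i$ coloured $0$, it simply recolours a single $u_i$ from $0$ to $6$. Since every $G'$-neighbour of $u_i$ lies in $\{2,4\}$ and $6$ is adjacent to both $2$ and $4$ in $G_{7,2}$, this is still a valid $(7,2)$-colouring of $G'$. Now the induced list on the corresponding cycle vertex becomes $N_{G_{7,2}}(6)=\{1,2,3,4\}$ while all others remain $N_{G_{7,2}}(0)=\{2,3,4,5\}$, so the list assignment on $C$ is non-uniform and Lemma~\ref{oddcyclelists} gives the extension. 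This single-vertex nudge into the $(7,2)$ palette replaces your entire Kempe/quotient machinery.
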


\begin{proof}
Suppose we have an odd cycle $C$ in the Gallai Tree. By $4$-criticality, $G-C$ has a $3$-colouring, say $f$. As each vertex in $C$ has degree $3$ in $G$, each vertex in $C$ has exactly one neighbour not in $C$.
For every vertex $v \in V(C)$, let $v'$ be the neighbour of $v$ not in $C$. Assign to $v$ the list $N_{G_{7,2}}(f(v'))$. 

 This list assignment is $4$-interval, and hence by Theorem \ref{oddcyclelists}, we may assume this list assignment is uniform. Then for all $v \in V(C)$, we may assume that $f(v') =0$. If for every $u,v \in V(C)$, we have $u' = v'$, then $G$ is isomorphic to an odd wheel. Thus for some $u,v \in V(C)$, we have $u' \neq v'$. Change the colour of $u'$ to $6$. Observe that this is still a $(7,2)$-colouring. But now if we update the lists on $C$, we do not have a uniform list, and hence we can apply Theorem \ref{oddcyclelists} to extend the $(7,2)$-colouring of $G-C$ to a $(7,2)$-colouring of $G$, a contradiction. 
\end{proof}

We finish this section by observing that odd wheels are not counterexamples to Theorem \ref{maintheorem}.

\begin{obs}
When $k \geq 3$, the graph $W_{2k+2}$ has
\[e(W_{2k+2}) \geq \frac{17v(W_{2k+2})}{10}.\]
\end{obs}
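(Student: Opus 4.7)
The plan is a direct calculation. First I would record the relevant parameters of the wheel: by definition $W_{2k+2}$ is obtained from the cycle $C_{2k+1}$ by adding a hub adjacent to all of its vertices, so
\[
v(W_{2k+2}) = 2k+2 \qquad \text{and} \qquad e(W_{2k+2}) = (2k+1) + (2k+1) = 2(2k+1),
\]
the two summands being the cycle edges and the spokes.

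Next I would substitute these into the desired inequality and clear denominators. We want
\[
2(2k+1) \;\geq\; \frac{17(2k+2)}{10},
\]
and multiplying both sides by $10$ reduces this to $20(2k+1) \geq 17(2k+2)$, i.e.\ $40k + 20 \geq 34k + 34$, which simplifies to $6k \geq 14$, or equivalently $3k \geq 7$. This inequality is strict for every integer $k \geq 3$, so the claim holds.

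There is essentially no obstacle: the statement is an elementary numerical check about the edge count of a specific graph, and the only mild subtlety is remembering that $W_n$ in this paper has $n$ vertices (hub plus a $C_{n-1}$), so one must take $n = 2k+2$ rather than $n = 2k+1$ when parsing ``odd wheel.'' Once the correct vertex and edge counts are in hand, the proof is a one-line inequality.
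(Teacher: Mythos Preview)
Your proof is correct and essentially identical to the paper's: both compute $v(W_{2k+2})=2k+2$ and $e(W_{2k+2})=4k+2$, clear the denominator, and reduce the inequality to $6k\geq 14$, which holds for $k\geq 3$. The only difference is cosmetic --- you explicitly split the edge count into cycle edges plus spokes, which the paper omits.
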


\begin{proof}
Observe that $W_{2k+2}$ has $e(W_{2k+2}) = 4k+2$, and $v(W_{2k+2}) = 2k+2$. Then 
\[4k+2 \geq \frac{17(2k+2)}{10}\] 
which is equivalent to 
\[40k + 20 \geq 34k +34,\]
which simplifies to 
\[6k \geq 14\]
which is true if $k \geq 3$. 
\end{proof}

\section{A detour to $k$-critical graphs}
\label{detourtokcrit}
In this section we observe that we can extend the ideas of the previous section to prove that every $k$-critical graph with no $(2k-1,2)$-colouring has no block in the Gallai Tree isomorphic to a $K_{k-1}$.

\begin{lemma}
\label{colouringlemma}
Let $k \geq 2$. Let $L$ be a $k$-list-assignment of $K_{k+1}$. Then $K_{k+1}$ is $L$-colourable unless $L$ is uniform.
\end{lemma}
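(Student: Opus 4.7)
The plan is to recognize that a proper $L$-colouring of $K_{k+1}$ amounts to selecting $k+1$ pairwise distinct colours, one from each vertex's list, since every pair of vertices is adjacent. This is exactly a system of distinct representatives (SDR) for the family $\{L(v) : v \in V(K_{k+1})\}$, so the natural tool is Hall's marriage theorem.

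So I would proceed by contrapositive: assume there is no $L$-colouring and show that $L$ must be uniform. By Hall's theorem, the non-existence of an SDR yields a set $T \subseteq V(K_{k+1})$ violating Hall's condition, that is, $\bigl|\bigcup_{v \in T} L(v)\bigr| < |T|$.

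The key observation is that this $T$ must be all of $V(K_{k+1})$. Indeed, since each list has size exactly $k$, any non-empty $T$ satisfies $\bigl|\bigcup_{v \in T} L(v)\bigr| \geq k$, forcing $|T| \geq k+1$, and so $T = V(K_{k+1})$. Hence $\bigl|\bigcup_{v \in V(K_{k+1})} L(v)\bigr| \leq k$. Combined with the lower bound $\bigl|\bigcup_{v} L(v)\bigr| \geq |L(v_0)| = k$ (for any single vertex $v_0$), we get equality $\bigl|\bigcup_{v} L(v)\bigr| = k$. Since each $L(v)$ is a $k$-element subset of this common $k$-element union, every list equals the union, i.e.\ $L$ is uniform, as desired.

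There is no real obstacle here; the argument is a direct Hall's theorem computation. The only thing to be careful about is the trivial direction, namely that a uniform $k$-list assignment on $K_{k+1}$ genuinely fails to admit an $L$-colouring, which follows immediately from the pigeonhole principle (we need $k+1$ distinct colours but only $k$ are available). This confirms that the ``unless $L$ is uniform'' clause is sharp.
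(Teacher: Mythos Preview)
Your proof is correct and takes a genuinely different route from the paper. The paper argues by induction on $k$: the case $k=2$ is handled greedily, and for $k \geq 3$ one picks adjacent $u,v$ with $L(u)\neq L(v)$, colours $u$ with some $c\in L(u)\setminus L(v)$, deletes $c$ from the remaining lists, and applies the inductive hypothesis to $K_{k+1}-u\cong K_{k}$ (the point of $c\notin L(v)$ being that $L(v)$ keeps size $k$, so the reduced assignment is still a $(k-1)$-list-assignment and the induction goes through). Your argument instead recasts the problem as finding a system of distinct representatives and reads off uniformity in one stroke from a Hall violator. The inductive approach is self-contained and matches the greedy, extend-a-partial-colouring style used throughout the paper; your approach is shorter, non-inductive, and makes it structurally transparent why a uniform list is the \emph{only} obstruction (the Hall deficiency forces the union of all lists to collapse to a single $k$-set).
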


\begin{proof}
We proceed by induction on $k$. The greedy algorithm gives the result when $k = 2$, so assume that $k \geq 3$. Now suppose $L$ is not uniform. Let $uv \in E(K_{k+1})$ such that $L(u) \neq L(v)$. Let $c \in L(u) \setminus L(v)$. Now colour $u$ with $c$ and remove $c$ from the lists of the remaining vertices. As $c \not \in L(v)$, we may apply induction, so there is an $L$-colouring of $K_{k+1}-u$, and hence an $L$-colouring of $K_{k+1}$. 
\end{proof}
 
We will say a list assignment $L$ is \textit{$(2k-1,2)$-near-uniform} if all vertices receive one of two possible lists, and these lists correspond to the neighbourhoods of two non-adjacent vertices in $G_{2k-1,2}$.  Further we will assume a near-uniform list assignment is not uniform. 
\begin{lemma}
\label{listcolouring}
Let $k \geq 4$. Let $L$ be a $(2k-1,2)$-near-uniform list assignment of $K_{k-1}$. Then there is an $L$-$(2k-1,2)$-colouring of $K_{k-1}$.
\end{lemma}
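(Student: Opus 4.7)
The plan is to exhibit a single colouring built from one carefully chosen maximum clique of $G_{2k-1,2}$. Since two distinct vertices of $G_{2k-1,2}$ are non-adjacent if and only if they are consecutive modulo $2k-1$, and since $G_{2k-1,2}$ is vertex-transitive, I may rotate and assume the two lists used are
$$A := N_{G_{2k-1,2}}(0) = \{2,3,\ldots,2k-3\}, \qquad B := N_{G_{2k-1,2}}(1) = \{3,4,\ldots,2k-2\},$$
so that $A\setminus B = \{2\}$, $B\setminus A = \{2k-2\}$, and $A\cap B = \{3,4,\ldots,2k-3\}$. Because $L$ is not uniform, both lists actually occur; write $a\geq 1$ for the number of vertices of $K_{k-1}$ with list $A$ and $b\geq 1$ for those with list $B$, so $a+b = k-1$.

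The key step is to consider the set $C := \{2,4,6,\ldots,2k-2\}$, which has $k-1$ elements. I claim $C$ is a clique of $G_{2k-1,2}$. Indeed, the cyclic gaps between consecutive elements of $C$ in $\mathbb{Z}/(2k-1)$ are all $2$ except for the wraparound gap from $2k-2$ back to $2$, which equals $3$; consequently every pair of elements of $C$ is at circular distance between $2$ and $2k-3$, confirming pairwise adjacency in $G_{2k-1,2}$. Crucially, $C$ contains both defect colours $A \setminus B = \{2\}$ and $B \setminus A = \{2k-2\}$, while its remaining $k-3$ elements $\{4,6,\ldots,2k-4\}$ all lie in $A\cap B$.

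The colouring is then forced upon us and immediate to verify: assign colour $2$ to one of the $a$ vertices with list $A$, assign colour $2k-2$ to one of the $b$ vertices with list $B$, and distribute the $k-3$ colours $\{4,6,\ldots,2k-4\}\subseteq A\cap B$ bijectively among the remaining $(a-1)+(b-1)=k-3$ vertices. Every vertex receives a colour from its own list (the two extreme colours by construction, the interior colours because they lie in $A\cap B$), all colours are pairwise distinct, and the image is the clique $C\subseteq G_{2k-1,2}$. Thus we obtain a valid $L$-$(2k-1,2)$-colouring of $K_{k-1}$.

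There is no serious technical obstacle once $C$ is identified; the entire content of the proof lies in spotting that the unique $(k-1)$-clique of $G_{2k-1,2}$ whose size-$3$ gap sits exactly on the non-adjacent pair $\{0,1\}$ defining the near-uniformity of $L$ is also the unique clique of that size containing both defect colours in $A\triangle B$. This is a structural coincidence specific to the circular clique $G_{2k-1,2}$, and it is precisely what makes the near-uniform case manageable while the uniform case (addressed separately) can fail.
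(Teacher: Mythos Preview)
Your proof is correct and takes a genuinely different route from the paper. The paper argues by induction on $k$: it handles $k=4$ by appeal to the earlier odd-cycle lemma (since $K_3 = C_3$), and for the inductive step colours one vertex with list $[2,2k-3]$ by $2$, observes that the residual lists $[4,2k-3]$ and $[4,2k-2]$ are again the neighbourhoods of two consecutive vertices in a smaller circular clique, and invokes the induction hypothesis. Your argument instead produces the colouring in one shot by identifying the maximum clique $C=\{2,4,\ldots,2k-2\}$ of $G_{2k-1,2}$ and noting that its two extreme elements are exactly the ``defect'' colours $A\setminus B=\{2\}$ and $B\setminus A=\{2k-2\}$, while the remaining $k-3$ elements lie in $A\cap B$. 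This is more elementary and self-contained: it avoids both the induction and the dependence on the odd-cycle list-colouring lemma for the base case, and it makes explicit why near-uniformity (as opposed to uniformity) is exactly what is needed---the two lists together cover a $(k-1)$-clique of $G_{2k-1,2}$, whereas a single neighbourhood only contains $(k-2)$-cliques. The paper's inductive framing, on the other hand, hints at how one might try to push the argument to more general list configurations, though that is not exploited here.
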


\begin{proof}
If $k = 4$ this follows from Lemma \ref{oddcyclelists}. Therefore we proceed by induction and assume that $k \geq 5$. Without loss of generality, we may assume that the lists are the intervals $[2,2k-3]$ and $[3,2k-2]$. Colour some vertex with $2$. Then the new lists are $[4,2k-3]$, and $[4,2k-2]$. Viewing this new list assignment as a near uniform $(2(k-1)-1,2)$-list assignment of $K_{k-2}$, we see that they correspond to the neighbourhoods of the vertices $2$ and $3$, and hence by induction there is an $L$-$(2k-1,2)$-colouring of $K_{k-1}$. 
\end{proof}

\begin{cor}
Suppose $G$ is a $k$-critical graph which contains a $K_{k-1}$ where for every $v \in V(K_{k-1})$, $\deg(v) = k-1$. Then either $G$ is isomorphic to $K_{k}$, or $G$ has a $(2k-1,2)$-colouring.
\end{cor}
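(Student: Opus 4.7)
The plan is to adapt the strategy from the odd-wheel analysis in Section~\ref{Oddcycle}, with Lemma~\ref{listcolouring} playing the role of Lemma~\ref{oddcyclelists}, split into a non-uniform and a uniform sub-case for the induced list-assignment on $C$.

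Let $C \cong K_{k-1}$ denote the given clique, every vertex of which has degree $k-1$ in $G$. Each $v \in V(C)$ has $k-2$ neighbours inside $C$ and hence a unique external neighbour $v' \in V(G) \setminus V(C)$. If all the $v'$ coincide with a common vertex $u$, then $V(C) \cup \{u\}$ induces a $K_k$ in $G$ and the $k$-criticality of $G$ forces $G = K_k$; so we may assume at least two distinct external vertices occur. By $k$-criticality the proper subgraph $G - V(C)$ admits a proper $(k-1)$-colouring $f$; I would realise $f$ using the colour set $P := \{0, 2, 4, \ldots, 2k-4\} \subset \mathbb{Z}_{2k-1}$. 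Any two elements of $P$ lie at cyclic distance at least $2$ in $\mathbb{Z}_{2k-1}$, so $P$ is a clique in $G_{2k-1,2}$ and $f$ is already a $(2k-1,2)$-colouring of $G - V(C)$.

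For the non-uniform sub-case, suppose the $f(v')$'s are not all equal. For each $v \in V(C)$ define $L_P(v) := P \setminus \{f(v')\}$. For any $S \subseteq V(C)$, the union $\bigcup_{v \in S} L_P(v) = P \setminus \bigcap_{v \in S}\{f(v')\}$ has cardinality $k-1$ unless every $f(v')$ with $v \in S$ agrees, in which case it has cardinality $k-2$; the former case satisfies Hall's condition trivially, while in the latter case the non-uniformity hypothesis forces $S \neq V(C)$, so $|S| \leq k-2 = |\bigcup_{v \in S} L_P(v)|$. Hall's theorem then supplies a bijection $g \colon V(C) \to P$ with $g(v) \in L_P(v)$ for every $v$. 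The combined map $f \cup g$ is a $(2k-1,2)$-colouring of $G$: its image on $V(C)$ is the clique $P$ in $G_{2k-1,2}$, validating edges inside $C$, and for each edge $vv'$ the colours $g(v), f(v')$ are distinct elements of $P$ and hence at cyclic distance at least $2$.

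For the uniform sub-case, $f(v') = c$ is constant; without loss of generality $c = 0$. Pick $u \in V(C)$ whose external neighbour $u'$ differs from some other $v'$, and recolour $u'$ from $0$ to $2k-2$. The remaining neighbours of $u'$ inside $G - V(C)$ were coloured in $P \setminus \{0\} = \{2, 4, \ldots, 2k-4\}$; a direct check shows each such colour lies at cyclic distance at least $2$ from $2k-2$ in $\mathbb{Z}_{2k-1}$ for $k \geq 4$, so the recoloured $f$ is still a valid $(2k-1,2)$-colouring of $G - V(C)$. The induced list-assignment on $C$ now uses only the two lists $N_{G_{2k-1,2}}(0)$ and $N_{G_{2k-1,2}}(2k-2)$ — the neighbourhoods of the vertices $0$ and $2k-2$, which are at cyclic distance $1$ in $\mathbb{Z}_{2k-1}$ and hence non-adjacent in $G_{2k-1,2}$ — so it is $(2k-1,2)$-near-uniform, and Lemma~\ref{listcolouring} extends $f$ to a $(2k-1,2)$-colouring of all of $G$. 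The main obstacle is precisely this uniform sub-case, where Hall's condition for the lists $L_P$ fails (only $k-2$ elements of $P$ would remain available to $k-1$ vertices of $C$); the resolution leverages the existence of two distinct external vertices to break uniformity via the recolouring of $u'$ to the off-$P$ colour $2k-2$, at which point Lemma~\ref{listcolouring} applies.
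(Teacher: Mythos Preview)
Your proof is correct and follows essentially the same route as the paper: split into non-uniform and uniform sub-cases on the colours of the external neighbours, and in the uniform case recolour one external neighbour to $2k-2$ and invoke Lemma~\ref{listcolouring}. The only cosmetic difference is that for the non-uniform sub-case you use Hall's theorem to produce the list-colouring of $C$, whereas the paper appeals to Lemma~\ref{colouringlemma} (applied with parameter $k-2$); these are equivalent ways of obtaining the same proper $(k-1)$-colouring extension.
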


\begin{proof}
Suppose that $G$ contains a $K_{k-1}$ where every vertex in the $K_{k-1}$ has degree $k-1$. Let $f$ be a $k-1$ colouring of $G-K_{k-1}$, where we may assume that the $k-1$ colouring uses the colours $\{0,2,4,\ldots,2(k-2)\}$. By Lemma \ref{colouringlemma}, we can extend $f$ to a $k-1$-colouring of $G$ unless all vertices adjacent to the $K_{k-1}$ receive the same colour, which without loss of generality we may assume to be $0$. If there is only one such vertex, then $G$ is isomorphic to $K_{k}$. Thus there is at least two vertices. Change the colour of one of these vertices from $0$ to $2k-2$. This remains a $(2k-1,2)$-colouring, and now we can apply Lemma \ref{listcolouring} to extend the colouring, completing the claim. 
\end{proof}

\section{Acyclic Gallai Trees- A reduction to paths}
\label{reductiontopathssection}
 In this section we prove that if the Gallai Tree is acyclic, then every component of the Gallai Tree is isomorphic to a path or a claw.  We also show that a vertex minimal counterexample to Theorem \ref{maintheorem} has no claw component. For the rest of the paper, we will always assume that the Gallai Tree does not contain an odd cycle. The next lemma is the most important lemma in the entire paper. 

\begin{lemma}
\label{reconfiguringpathsoflength3}
Let $x,y,z \in V(G)$ such that $x,y$ and $z$ have degree $3$, and $xy,yz \in E(G)$. Let $x',x'',y',z',z''$ be the other neighbours of $x,y,z$ respectively. Then up to relabelling the vertex labels, $x' = z'$, $y'x'' \in E(G)$, and $y'z'' \in E(G)$. Additionally, $x'' \neq z''$. 

Further, if $x' \neq y'$, and there are two distinct vertices $x_{x',x''}$, $x_{x',z''}$ not in $\{x,y,z\}$ where $x_{x',x''}$ is adjacent to $x'$ and $x''$, and $x_{x',z''}$ is adjacent to $x'$ and $z''$.  
\end{lemma}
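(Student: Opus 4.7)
The plan is to argue by contradiction: assuming the conclusion fails for some triple $(x,y,z)$, I will construct a $(7,2)$-coloring of $G$, contradicting the hypothesis that $G$ has no $(7,2)$-coloring. Since $G$ is $4$-critical, $G - \{x,y,z\}$ admits a $3$-coloring $f$ with values in $\{0,2,4\}$. This induces interval lists $L(v) = \bigcap_{u \in N_G(v) \setminus \{x,y,z\}} N_{G_{7,2}}(f(u))$ on $v \in \{x,y,z\}$ by Lemma \ref{rickjon}. One always has $|L(y)| = 4$ (only one outside neighbor), while $|L(x)|,|L(z)| \in \{1,2,4\}$: size $4$ when the two outside colors agree, size $2$ when they form $\{0,2\}$ or $\{2,4\}$, and size $1$ precisely for the pair $\{0,4\}$. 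Extending $f$ to $x,y,z$ is an $L$-$(7,2)$-coloring of the path $x$-$y$-$z$.

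By Proposition \ref{pathclaim} and Lemma \ref{sizetwoclaimsimplified}, this extension succeeds except in a short list of ``bad'' color patterns where both $|L(x)|,|L(z)|\le 2$ and neither endpoint admits the precolor-and-extend route (which would need a size-$4$ list at the opposite endpoint). For each bad pattern I will try to modify $f$ by recoloring some subset of $\{x',x'',y',z',z''\}$ to a new $3$-coloring whose induced lists do admit an extension. The decisive dichotomy is whether $\{x',x''\}$ and $\{z',z''\}$ share a vertex: if they are disjoint, then the $x$-side and $z$-side recolorings act independently, and a direct check confirms that in every bad pattern at least one side can be enlarged to list size $\geq 3$, securing an extension via Lemma \ref{sizetwoclaimsimplified}. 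This forces $\{x',x''\} \cap \{z',z''\} \neq \emptyset$, and after relabeling we obtain $x' = z'$. A parallel analysis focused on $y'$ shows that unless $y'$ is adjacent to both $x''$ and $z''$, recoloring $x''$ (respectively $z''$) still enlarges the relevant list; this yields $y'x'',\,y'z'' \in E(G)$. If instead $x'' = z''$, then $x$ and $z$ have identical outside neighborhoods and hence identical lists, and coloring $y$ first from $L(y)$ and then $x,z$ from their common list always succeeds, so $x'' \neq z''$.

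For the second part, assume $x' \neq y'$. Under a bad pattern, each attempted recoloring of $x'$ aimed at equalizing $f(x')$ with $f(x'')$ (which would enlarge $L(x)$ to size $4$ and complete the extension) is blocked by a neighbor of $x'$ in $V(G) \setminus \{x,y,z\}$ already colored $f(x'')$; tracking the constraints required to preclude a cascade of further corrective recolorings, this blocker is itself forced to be adjacent to $x''$, yielding the common neighbor $x_{x',x''}$. The symmetric analysis using $x' = z'$ and aiming at $f(x') = f(z'')$ produces $x_{x',z''}$ as a common neighbor of $x'$ and $z''$; distinctness from $x_{x',x''}$ follows because the required colors at the two blockers are the distinct values $f(x'')$ and $f(z'')$. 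The principal obstacle of the proof is the case analysis over the short list of bad color patterns and their associated modification attempts; it is made tractable by the $x \leftrightarrow z$ symmetry of the statement and by the rigid degree-$4$ structure of $G_{7,2}$, which tightly restricts the possible obstructions in each case.
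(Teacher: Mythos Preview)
Your outline has the right overall shape (take a $3$-colouring of $G-\{x,y,z\}$, reduce to a single bad colour pattern, then argue structurally), but the core mechanism you propose is too weak to close the argument.

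The gap is in the step ``modify $f$ by recoloring some subset of $\{x',x'',y',z',z''\}$ to a new $3$-colouring''. In the unique bad pattern (up to symmetry $f(y')=0$, $f(x')=0$, $f(x'')=2$, $f(z')=0$, $f(z'')=4$) there is no reason any of these five vertices can be recoloured within $\{0,2,4\}$: each may have neighbours of both other colours elsewhere in $G-\{x,y,z\}$. So your ``direct check'' that one side can always be enlarged to list size $\ge 3$ cannot be carried out with the tools you have declared; disjointness of $\{x',x''\}$ and $\{z',z''\}$ buys independence, but not the existence of a legal $3$-recolouring. The same objection applies to your argument for $y'x''\in E(G)$: simply recolouring $x''$ within $\{0,2,4\}$ need not be possible.

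What the paper actually does is abandon $3$-colourings at this point and exploit the extra room in $G_{7,2}$. Since every neighbour of a vertex coloured $0$ lies in $\{2,4\}$, one may always shift such a vertex to colour $6$ (or $1$) and remain a valid $(7,2)$-colouring. Thus if $x'\neq z'$ one recolours $z'$ to $6$ and extends by $x\mapsto 5$, $y\mapsto 3$, $z\mapsto 1$. For the adjacencies $y'x''$, $y'z''$ and for the common neighbours $x_{x',x''}$, $x_{x',z''}$, the paper uses a two-level cascade: send $N_4(N_2(v))\to 5$, then $N_2(v)\to 3$, then $v\to 1$. This recolours an \emph{unbounded} set of vertices, not just the five you name, and the obstruction to the cascade reaching its goal is precisely the existence of the required edge or common neighbour. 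Your proposal gestures at ``a cascade of further corrective recolourings'' only in the second part; you need this idea (in its $(7,2)$ form) already for $x'=z'$ and for $y'x''\in E(G)$, and it must be allowed to touch vertices outside $\{x',x'',y',z',z''\}$.
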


\begin{proof}

Let $f$ be a $3$-colouring of $G - \{x,y,z\}$. Without loss of generality we may assume that $f(y') =0$.
If $f(x') = f(x'')$, then simply give $z$ a colour from its available colours, then give $y$ an available colour, and finally as $f(x') = f(x'')$, $x$ has an available colour and we can extend the colouring. Hence $f(x') \neq f(x'')$, and similarly $f(z') \neq f(z'')$. If $\{f(x'),f(x'')\} = \{f(z'),f(z'')\}$, then give $x$ and $z$ the same colour, and we can extend this colouring to $y$. Finally, if $\{f(x'),f(x'')\} = \{2,4\}$, then colour $x$ with $0$, colour $z$ with any available colour, and we can extend the colouring to $y$. A similar argument works when $\{f(z'),f(z'')\} = \{2,4\}$.  

Thus without loss of generality we can assume that $f(x') = 0$, $f(x'') = 2$, $f(z') = 0$ and $f(z'') = 4$.  Observe this implies that $x'' \neq z''$ as they have different colours. 
 
\begin{claim}
$z' =x'$.
\end{claim}

\begin{proof}
If not, change the colour of $z'$ to $6$ and extend the colouring by colouring $x$ with $5$, $y$ with $3$, and $z$ with $1$. 
\end{proof}

\begin{claim}
$y'x'' \in E(G)$.
\end{claim}

\begin{proof}
Suppose not. Then change the colour of all vertices in $N_{4}(N_{2}(y'))$ to $5$, and change the colour of all vertices in $N_{2}(y')$ to $3$ and change the colour of $y'$ to $1$.
 If $z'' \in N_{4}(N_{2}(y'))$, then colour $z$ with $3$, $y$ with $6$ and $x$ with $4$.
Therefore $z'' \not \in N_{4}(N_{2}(y'))$. If $x' \neq y'$  colour $z$ with $2$, $y$ with $6$ and $x$ with $4$. If $x' = y'$, colour $z$ with $6$, $y$ with $3$ and $x$ with $6$. 
\end{proof}

\begin{claim}
$y'z'' \in E(G)$.
\end{claim}

\begin{proof}
Suppose not. Then change the colour of $z''$ to $5$ and the colour of $y'$ to $6$. Then colour $x$ with $4$, $y$ with $1$, and $z$ with $3$.  
\end{proof}

To finish the proof, suppose that $x' \neq y'$. If $z'' \not \in N_{4}(N_{2}(x'))$, then simply change the colour of all vertices in $N_{4}(N_{2}(x'))$ to $5$, change the colour of all vertices in $N_{2}(x')$ to $3$, and change the colour of $x'$ to $1$. Then colour $z$ with $6$, $y$ with $2$, and $x$ with $5$. Thus there is a vertex $x_{x',z''}$ which is adjacent to both $x'$ and $z''$. To see there is also a vertex $x_{x',x''}$ which is adjacent to $x'$ and $x''$, simply exchange the colours $2$ and $4$ on all vertices, and then repeat the above argument. Distinctness follows from the fact that their colours are different. 
\end{proof}

Observe that the vertices $x_{x',x''}$ and $x_{x',z''}$ may just be $x''$ and $z''$ if there are edges $x'x''$ and $x''z''$.  Now we can prove that the Gallai Tree of a $4$-critical graph with no $(7,2)$-colouring has every component isomorphic to either an odd cycle, a path, or a claw. 

\begin{cor}
\label{reductiontopaths}
Let $G$ be a $4$-critical graph with no $(7,2)$-colouring. If the Gallai Tree of $G$ is acyclic, then every component is either isomorphic to a path, or contains at most four vertices.
\end{cor}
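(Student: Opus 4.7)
My plan is to prove the contrapositive: assuming the Gallai Tree $D_3(G)$ is acyclic, I will show via Lemma~\ref{reconfiguringpathsoflength3} that no component $C$ can have at least five vertices while failing to be a path. Because every vertex of $C$ has degree three in $G$, $C$ is a subcubic tree; a subcubic tree is a path if and only if it has no vertex of degree three, so if $C$ is not a path it contains a vertex $v$ of degree exactly three in $C$ with three neighbours $a, b, c \in V(C)$. Since $|V(C)| \geq 5$ and the set $\{v, a, b, c\}$ only accounts for four vertices, connectedness of $C$ forces some neighbour of $v$, say $a$, to have an additional neighbour $a' \in V(C) \setminus \{v\}$.

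Next I apply Lemma~\ref{reconfiguringpathsoflength3} to the triple $(x, y, z) := (a', a, v)$ of consecutive degree-three vertices joined by the edges $a'a$ and $av$. The conclusion gives, up to relabelling, that $x' = z'$: some neighbour of $a'$ other than $a$ coincides with one of the two neighbours of $v$ other than $a$. The key observation is that because $v$ has degree three in $C$, these two outside neighbours of $v$ are precisely the degree-three vertices $b$ and $c$. Hence $a'$ is adjacent in $G$ to $b$ or $c$; without loss of generality $a'b \in E(G)$. Since $a'$ and $b$ both have degree three in $G$, the edge $a'b$ belongs to $D_3(G)$, and hence to $C$. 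This produces the $4$-cycle $a' - a - v - b - a'$ in $C$, contradicting the acyclicity of the Gallai Tree. Thus every component of $D_3(G)$ with five or more vertices must be a path, which is the desired conclusion.

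The main obstacle is spotting the right instance of the lemma to invoke. The key point is to apply it ``outward from a branch vertex'': whenever $v$ has degree three in $C$, both of $v$'s outside neighbours automatically lie in the Gallai Tree, which is exactly what promotes the forced coincidence $x' = z'$ into a new edge within the tree $C$ and yields the contradictory cycle. One should also briefly verify that $a' \notin \{b, c\}$, since otherwise one already obtains a triangle in $C$, but this is immediate: $a' = b$ or $a' = c$ would itself violate acyclicity of $C$, so the short cycle above really uses four distinct vertices.
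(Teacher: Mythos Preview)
Your proof is correct and uses essentially the same approach as the paper: apply Lemma~\ref{reconfiguringpathsoflength3} to three consecutive degree-$3$ vertices so that the forced coincidence $x'=z'$ produces an edge inside $D_3(G)$ and hence a cycle, contradicting acyclicity. The only cosmetic difference is the choice of triple: the paper takes the branch vertex as the middle vertex $y$ (with two of its tree-neighbours as $x,z$) and uses all three conclusions of the lemma to pin the component down as a claw, whereas you place the branch vertex $v$ at an end (with $a$ in the middle and the fifth vertex $a'$ at the other end) and need only the $x'=z'$ conclusion to obtain the $4$-cycle.
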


\begin{proof}
Let $T$ be the Gallai Tree for $G$. If $T$ has no vertex of degree $3$ then every component of $T$ is a path and we are done. Let $T$ be a component which contains a vertex of degree $3$. Let $y$ be such a vertex, and let $x,z,y'$ be the three vertices adjacent to $y$ with degree $3$. 

Apply Lemma \ref{reconfiguringpathsoflength3} to $x,y,z$. Then $y'$ is adjacent to a neighbour of $x$, $x''$ and $y'$ is adjacent to a vertex of $z$, $z''$. Further $x$ and $z$ share a neighbour $x'$ that is not $y$. If $x'$ has degree $3$, then $x,y,z,x'$ is a cycle, contradicting that the Gallai Tree is acyclic. If $x''$ has degree $3$, then $y',x'',x,y$ is a cycle in the Gallai Tree - a contradiction. If $z''$ has degree $3$, then $y',z'',z,y$ is a cycle in the Gallai Tree - a contradiction. Hence $T$ is isomorphic to a claw, and the claim follows. 
\end{proof}

 We claim that in a vertex minimal counterexample, the Gallai Tree contains no component isomorphic to a claw.

\begin{lemma}
\label{noclaw}
Let $G$ be a $4$-critical graph with no $(7,2)$-colouring. Let $C$ be a claw component of the Gallai Tree, where $V(C) = \{x,y,z,y'\}$, where $y$ is adjacent to all of $x,z$ and $y'$. Let $G'$ be the graph obtained by identifying all of the vertices in $C$ into a single vertex and removing multiple edges and loops. Then $G'$ is $4$-critical and has no $(7,2)$-colouring. Further $e(G') = e(G) -6$ and $v(G') = v(G) -3$.
\end{lemma}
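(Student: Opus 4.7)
The plan is to apply Lemma \ref{reconfiguringpathsoflength3} to the path $x$--$y$--$z$ in $G$, with $y'$ playing the role of the third neighbour of $y$, in order to pin down the neighbourhood structure of the claw. The conclusion (after relabelling) gives that $x$ and $z$ share an external neighbour $x' = z'$, that $y'$ is adjacent to the remaining external neighbours $x''$ of $x$ and $z''$ of $z$, and that $x'' \neq z''$. A short degree check (each claw vertex has degree $3$, with exactly one neighbour outside the claw per leaf and none for $y$) ensures $\{x', x'', z''\}$ are pairwise distinct. Consequently $v^*$ has exactly three neighbours in $G'$; the three internal claw edges $xy, yz, yy'$ become loops and are discarded, while the six external claw edges collapse pairwise onto the three edges $v^*x'$, $v^*x''$, $v^*z''$ of $G'$. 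This gives $v(G') = v(G) - 3$ and $e(G') = e(G) - 6$ immediately.

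To show $G'$ has no $(7,2)$-colouring I will argue by contradiction: given a hypothetical $(7,2)$-colouring $f'$ of $G'$, I extend it to $G$. By vertex-transitivity of $G_{7,2}$ I can normalise so that $f'(v^*) = 0$, whence $f'(x'), f'(x''), f'(z'') \in N_{G_{7,2}}(0) = \{2,3,4,5\}$. The central observation is that $0 \in N_{G_{7,2}}(c)$ for every $c \in \{2,3,4,5\}$, so the extension $f(x) = f(z) = f(y') = 0$ together with $f(y) = 2$, combined with $f'$ on $V(G) - C$, satisfies every edge constraint in $G$, both internal to the claw and across all six external edges. This contradicts the hypothesis that $G$ has no $(7,2)$-colouring.

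For $4$-criticality, $\chi(G') \geq 4$ is immediate, since any $3$-colouring of $G'$ relabelled to $\{0,2,4\}$ would be a $(7,2)$-colouring, ruled out above. For $\chi(G') \leq 4$, $G - C$ is a proper subgraph of $G$ hence $3$-colourable, and extending by giving $v^*$ a fourth colour yields a $4$-colouring of $G'$. For edge-criticality, I split on whether $e \in E(G')$ is incident to $v^*$. If not, then $e$ is an edge of $G$ not touching the claw, so $G - e$ has a $3$-colouring $f$ that must be proper on the entire claw. A short case analysis of when a $3$-colouring of $\{x', x'', z''\}$ extends to $\{x,y,z,y'\}$ shows such an extension exists exactly when $|\{f(x'), f(x''), f(z'')\}| \leq 2$; hence assigning $v^*$ the unused third colour yields a $3$-colouring of $G' - e$. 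If $e$ is incident to $v^*$, then $v^*$ has at most two remaining neighbours in $G' - e$; take a $3$-colouring of the proper subgraph $G - C$ and assign $v^*$ a colour avoiding the colours of those neighbours.

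The main obstacle is the no-$(7,2)$-colouring step, because it requires concretely extending an arbitrary $(7,2)$-colouring of $G'$ back to $G$. The whole argument hinges on the observation that $0 \in N_{G_{7,2}}(c)$ for every $c \in N_{G_{7,2}}(0) = \{2,3,4,5\}$, which makes the uniform assignment of colour $0$ to all three leaves succeed regardless of the colours $f'$ chose for $x', x'', z''$. Given this, the structural setup coming from Lemma \ref{reconfiguringpathsoflength3} and the case analyses for edge-criticality are routine.
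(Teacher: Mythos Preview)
Your proof is correct and follows essentially the same architecture as the paper's: apply Lemma~\ref{reconfiguringpathsoflength3} to pin down the external neighbourhood $\{x',x'',z''\}$ of the claw, then verify the edge/vertex count, the no-$(7,2)$-colouring by pulling back the colour of $v^*$ to all three leaves, and $4$-criticality. One small slip: your parenthetical ``exactly one neighbour outside the claw per leaf'' is wrong --- each leaf $x,z,y'$ has \emph{two} external neighbours (only $y$ among its three neighbours lies in $C$), which is why there are six external edges as you correctly use afterwards. The pairwise distinctness of $x',x'',z''$ follows from $x''\neq z''$ (Lemma~\ref{reconfiguringpathsoflength3}) together with $x',x''$ being distinct neighbours of $x$ and $x'=z',z''$ being distinct neighbours of $z$ in a simple graph.

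For the edge-criticality case $e$ not incident to $v^*$, your argument diverges from the paper's and is a bit cleaner. The paper takes a $3$-colouring of $G-e$, shows that at most two colours appear on $\{x,z,y'\}$ (else $y$ has no colour), and if exactly two appear it recolours $z$ so that all three leaves become monochromatic, then assigns that common colour to $w$. You instead observe directly that if $f(x'),f(x''),f(z'')$ were all distinct then the leaves $x,z,y'$ would be forced onto three distinct colours, leaving none for $y$; hence $|\{f(x'),f(x''),f(z'')\}|\le 2$ and the missing colour works for $v^*$. Your route avoids the recolouring step entirely, at the cost of the (easy) forcing check on the claw; the paper's route makes the identification $f'(w)=f(x)=f(z)=f(y')$ more explicit. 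Both are short and valid.
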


\begin{proof}

Let $x',x''$ and $z',z''$ be the neighbours of $x$ and $z$ not in $C$ respectively. Apply Lemma \ref{reconfiguringpathsoflength3} to $x,y,z$. Then we can assume up to relabelling the vertices that $x' = z'$, $y'$ is adjacent to $x''$ and $z''$. In the graph $G'$, let $w$ denote the vertex obtained after identifying $x,y,z,x'$. 

\begin{claim}
\label{is4colourable}
The graph $G'$ has a $4$-colouring.
\end{claim}

\begin{proof}
Take any $3$-colouring $f$ of $G - \{x,y,z,x'\}$. Then we can extend $f$ to a $4$-colouring of $G'$ by giving $w$ any available colour (there is an available colour as $w$ has degree $3$). 
\end{proof}

\begin{claim}
\label{no72}
The graph $G'$ has no $(7,2)$-colouring. 
\end{claim}

\begin{proof}
Suppose not, and let $f$ be a $(7,2)$-colouring of $G'$. Then consider the map $f'$ where for all vertices $v \in V(G) - \{x,y,z,x'\}$, $f'(v) = f(v)$, for all $t \in \{y',x,z\}$, $f'(t) = f(w)$, and let $f(y)$ be any colour in the set $N_{G_{7,2}}(f(w))$. This is a $(7,2)$-colouring of $G$, a contradiction.  
\end{proof}

\begin{claim}
The graph $G'$ is $4$-critical. 
\end{claim}

\begin{proof}
By Claim \ref{no72}, $G'$ does not have a $3$-colouring (as every $3$-colouring can be turned into a $(7,2)$-colouring), and by Claim \ref{is4colourable}, $G'$ is $4$-colourable, so $\chi(G') =4$. Therefore it suffices to show that $G'-e$ is $3$-colourable for all edges $e$. 

First consider deleting an edge incident to $w$ say $e$. To see that $G'-e$ has a $3$-colouring, simply take any $3$-colouring of $G- \{x,y,z,x'\}$, and there will be a colour left over for $w$, so we can extend the colouring (as $w$ has degree $3$).

Now consider an edge $e \in E(G')$ not incident to $w$. Then $G-e$ is $3$-colourable as $G$ is $4$-critical. Let $f$ be any $3$-colouring of $G-e$. If $f(y') = f(x)= f(z)$, then the colouring $f'$ where $f'(w) = f(y')$ and for all $v \in V(G') -w$, $f'(v) = f(v)$ is a $3$-colouring of $G'$. Thus at least two colours appear on $y',x,z$. Note that at most $2$ colours appear on $y',x,z$ as if all three colours appeared, then $y$ would not have a colour. Without loss of generality, suppose that $f(y') = f(x) = 0$, and $f(z) = 2$. Then $f(y) = 4$. Additionally $f(z') = 4$, and $f(x') =4$. But then $z$ is not adjacent to a vertex coloured $0$, and so we can change the colour of $z$ to $0$, and apply the case where $f(y') = f(x) =f(z)$. Hence $G'$ is $4$-critical. 
\end{proof}

Finally, one simply observes that $e(G') = e(G)-6$ and $v(G') = v(G) -3$. 
\end{proof}

\begin{cor}
In a vertex minimal counterexample to Theorem \ref{maintheorem}, all components of the Gallai Tree are paths.
\end{cor}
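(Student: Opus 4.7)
The plan is a standard minimal-counterexample reduction using Lemma \ref{noclaw}. Let $G$ be a vertex-minimal counterexample to Theorem \ref{maintheorem}, so $G$ is $4$-critical, has no $(7,2)$-colouring, is not isomorphic to $K_{4}$ or $W_{6}$, and satisfies $e(G) < \frac{17v(G)}{10}$. We have already shown (in the section on odd cycles in the Gallai Tree) that if the Gallai Tree of $G$ contains an odd cycle then $G$ is an odd wheel, and the observation at the end of that section verifies that odd wheels are not counterexamples; hence the Gallai Tree of $G$ is acyclic. By Corollary \ref{reductiontopaths} every component of the Gallai Tree is then either a path or a claw, and it remains to rule out the claw case.

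Suppose, for a contradiction, that the Gallai Tree of $G$ has a claw component $C$. Apply Lemma \ref{noclaw} to $C$: we obtain a graph $G'$ which is $4$-critical, has no $(7,2)$-colouring, and satisfies $e(G') = e(G) - 6$ and $v(G') = v(G) - 3$. In particular $v(G') < v(G)$, so by vertex-minimality of $G$, either $G'$ is isomorphic to $K_{4}$ or to $W_{6}$, or else
\[e(G') \geq \frac{17\,v(G')}{10}.\]
In the last case, substituting gives
\[e(G) - 6 \geq \frac{17(v(G)-3)}{10} = \frac{17\,v(G)}{10} - \frac{51}{10},\]
which rearranges to
\[e(G) \geq \frac{17\,v(G)}{10} + \frac{9}{10} > \frac{17\,v(G)}{10},\]
contradicting the fact that $G$ is a counterexample.

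It remains to dispose of the two exceptional possibilities for $G'$. If $G' \cong K_{4}$, then $v(G) = v(G')+3 = 7$ and $e(G) = e(G')+6 = 12$; but $\frac{17 \cdot 7}{10} = 11.9 \le 12$, contradicting $e(G) < \frac{17v(G)}{10}$. If $G' \cong W_{6}$, then $v(G) = 9$ and $e(G) = 16$; but $\frac{17 \cdot 9}{10} = 15.3 \le 16$, again a contradiction. Thus no claw component can occur, and every component of the Gallai Tree of $G$ is a path.

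The only slightly delicate point is the bookkeeping in the exceptional cases, since we need Lemma \ref{noclaw} to produce a graph that is still $4$-critical with no $(7,2)$-colouring even when the resulting graph happens to be one of the two allowed exceptions in Theorem \ref{maintheorem}; this is exactly what Lemma \ref{noclaw} gives, so the numerical verification above closes the argument. The main work of this corollary has already been done in Lemma \ref{noclaw}, and here we are only running the density arithmetic to convert the identification into a forbidden structure in the minimal counterexample.
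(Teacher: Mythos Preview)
Your proof is correct and follows essentially the same approach as the paper: reduce to the acyclic case via the odd-wheel result, invoke Corollary \ref{reductiontopaths} to get paths or claws, then use Lemma \ref{noclaw} and the density arithmetic (including the explicit checks for $G'\cong K_4$ and $G'\cong W_6$) to rule out claws. The only cosmetic difference is that you spell out the invocation of Corollary \ref{reductiontopaths} and the odd-wheel observation more explicitly than the paper does.
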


\begin{proof}
Let $G$ be a vertex minimal counterexample to Theorem \ref{maintheorem}. Then $G$ is not isomorphic to an odd wheel, so all components of the Gallai Tree of $G$ are either paths or claws. Suppose $G$ contains a claw component.  By Lemma \ref{noclaw}, there exists a graph $G'$ which is $4$-critical and has no $(7,2)$-colouring such that $e(G') = e(G) -6$, and $v(G') = v(G)-3$. First suppose that $G'$ is not $K_{4}$ or $W_{6}$. Then by minimality, we have that
\[e(G') \geq \frac{17v(G)}{10}.\]
Thus
\[e(G) -6 \geq \frac{17(v(G)-3)}{10}.\]
Rearranging we have
\[e(G) \geq \frac{17v(G) +9}{10}\]
contradicting that $G$ is a counterexample. 

Now assume that $G'$ is isomorphic to $K_{4}$. Then $v(G) =7$ and $e(G) = 12$, and clearly $12 \geq \frac{119}{10}$. Now assume that $G'$ is isomorphic to $W_{5}$. Then $v(G) = 9$ and $e(G) = 16$. Clearly $16 \geq \frac{153}{10}$. As these are all possibilities, the claim holds. 
\end{proof}

We will assume from here on, that all components of the Gallai Tree are paths.

\section{Structure around path components}
\label{remainingarguments}

The purpose of this section is to argue that the vertices near path components must have large degree. We start off with some easy observations. For any vertex $v$, let $N[v]$ denote the \textit{closed neighbourhood of $v$}, that is the set of vertices $N(v) \cup \{v\}$.

\begin{prop}
\label{closedneighbourhoods}
There does not exist two vertices $u$ and $v$ where $\deg(v) = \deg(u) = 3$,  and $N[v] = N[u]$. 
\end{prop}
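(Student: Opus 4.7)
The plan is to assume for contradiction that such $u, v$ exist and construct a $(7,2)$-colouring of $G$, contradicting the standing hypothesis. Since $N[u] = N[v]$ and $u \neq v$, $u$ and $v$ must be adjacent and share their remaining two neighbours; call these common neighbours $a$ and $b$, so $N(u) = \{v, a, b\}$ and $N(v) = \{u, a, b\}$. Two preliminary observations frame the argument. If either $a$ or $b$ had degree $3$, it would lie in the same component of $D_3(G)$ as $u, v$ and form a triangle with them, violating the standing assumption that every component of $D_3(G)$ is a path; hence $\deg(a), \deg(b) \geq 4$. Moreover $ab \notin E(G)$, since otherwise $\{u, v, a, b\}$ would induce a $K_4$, and a $4$-critical graph properly containing $K_4$ cannot exist (any vertex $w$ outside such a $K_4$ would give $G - w \supseteq K_4$, contradicting the $3$-colourability of $G - w$ guaranteed by criticality).

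Next, let $f$ be a $3$-colouring of $G - \{u, v\}$, which exists by $4$-criticality; use colours $\{0, 2, 4\}$. If $f(a) = f(b)$, then $u$ and $v$ can be assigned the two remaining colours, extending $f$ to a $3$-colouring of $G$ and contradicting $\chi(G) = 4$. Hence $f(a) \neq f(b)$, and without loss of generality $f(a) = 2$ and $f(b) = 4$. To extend $f$ to a $(7,2)$-colouring $f'$ of $G$, one needs colours $f'(u), f'(v) \in N_{G_{7,2}}(f'(a)) \cap N_{G_{7,2}}(f'(b))$ with $f'(u) f'(v) \in E(G_{7,2})$; direct inspection of the pairwise intersections in $G_{7,2}$ shows such a pair exists precisely when $f'(a)$ and $f'(b)$ are at circular distance at most $1$. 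The task thus reduces to finding a modification of $f$ in which $f'(a)$ and $f'(b)$ can be placed at circular distance $\leq 1$ while the rest of the colouring remains valid.

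The final step is a case analysis. Set $T_a = \bigcap_{w \in N(a) \setminus \{u,v\}} N_{G_{7,2}}(f(w))$ and define $T_b$ analogously; each is an interval by the standard interval property of available colours in $(p,q)$-circular colourings with $p/q < 4$. Because $a$'s external neighbours are coloured from $\{0, 4\}$ and $b$'s from $\{0, 2\}$, there are nine possible combinations $(T_a, T_b)$. In six of them, either $T_a \cap T_b \neq \emptyset$ (so $f'(a) = f'(b)$ suffices) or one finds $p \in T_a$ and $q \in T_b$ at circular distance $1$, and a $(7,2)$-colouring is constructed directly. The three hard cases are precisely those in which $T_a = \{2\}$, i.e.\ $a$ has both $0$- and $4$-coloured external neighbours. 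In these cases I would modify $f$ by a Kempe swap on a suitable $(0, 4)$- or $(0, 2)$-chain: the swap shifts the external-colour pattern around $a$ or $b$ into one of the six good cases, and $\deg(a), \deg(b) \geq 4$ supplies enough external neighbours to ensure a useful chain exists. The main obstacle is the bookkeeping in this Kempe step --- verifying that the chosen chain exists, that the swap remains proper, and that the resulting $3$-colouring lands in a good case rather than another bad one --- but the flexibility granted by the degree lower bounds on $a$ and $b$ should make this go through.
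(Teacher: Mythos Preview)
Your setup through the second paragraph is fine and matches the paper, but the proof stalls at exactly the point where it matters. You reduce to the ``hard cases'' where $T_a=\{2\}$ and then propose to finish with a Kempe chain swap, appealing to $\deg(a),\deg(b)\ge 4$ for ``flexibility.'' This step is not carried out, and I do not see why the degree bound helps: in the hard case $a$ may have exactly two external neighbours, one coloured $0$ and one coloured $4$, so there is no slack to exploit. A $(0,4)$-swap may move $b$ as well (since $f(b)=4$), and you have not argued that the resulting configuration lands in one of your six good cases rather than another bad one. As written this is a genuine gap.

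The paper's proof avoids all of this by using the extra colours of $G_{7,2}$ directly rather than trying to stay inside a $3$-colouring and reconfigure. With $f(a)=0$ and $f(b)=2$ (your WLOG choice of $f(a)=2$, $f(b)=4$ is equivalent up to relabelling), every $0$-coloured neighbour of $b$ in $G-\{u,v\}$ has all its neighbours coloured $2$ or $4$, so it can be recoloured to $6$; after that $b$ sees only colours $4$ and $6$ and can be recoloured to $1$. Now $a$ and $b$ carry colours $0$ and $1$, and one sets $f'(u)=3$, $f'(v)=5$ to finish. This is a single deterministic recolouring step with no case analysis on $T_a$, $T_b$ and no Kempe chains. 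In particular the observation $\deg(a),\deg(b)\ge 4$ is never used, so the appeal to the standing ``components are paths'' hypothesis is unnecessary here.
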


\begin{proof}
Suppose not. Let $x$ and $y$ be the two neighbours of $u$ and $v$ which are not $u$ or $v$. Note $xy \not \in E(G)$, as otherwise $G$ contains a $K_{4}$, and hence is isomorphic to $K_{4}$.  By $4$-criticality, $G-u-v$ has a $3$-colouring, say $f$.  We consider cases. 

If $f(x) =f(y)$, then we can extend not only to a $(7,2)$-colouring, but a $3$-colouring, a contradiction. So we may assume that $f(x) \neq f(y)$. Without loss of generality, let $f(x) =0$.  Suppose $f(y) = 2$. Now consider $N_{0}(y)$. We change the colour of every vertex in $N_{0}(y)$ to $6$, and then change the colour of $y$ to $1$. Let $f'$ be the resulting $(7,2)$-colouring. Now we can extend $f'$ to a $(7,2)$-colouring of $G$ by letting $f'(u) = 3$ and $f'(v) = 5$. The rest of the cases follow by exchanging colours and applying one of the above arguments if necessary. 
\end{proof}

Now we prove the most important lemma in the section, which despite being very simple, enforces a large amount of local structure around path components.

\begin{lemma}
\label{doubleneighbourhoodreconfig}
 Let $v$ be a vertex where $N(v) = \{x,y,z\}$. For any pair $w,t \in \{x,y,z\}$, either $wt \in E(G)$, or there is a vertex $x_{w,t} \neq v$ where $wx_{w,t} \in E(G)$ and $x_{w,t}t \in E(G)$. If $wt \not \in E(G)$, then for any $w',t' \in \{x,y,z\}$, $x_{w,t} = x_{w',t'}$ if and only if $\{w,t\} = \{w',t'\}$. 
\end{lemma}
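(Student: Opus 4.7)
The plan is to derive a contradiction in the case where $wt \notin E(G)$ and $w, t$ share no common neighbour other than $v$, by exhibiting a $(7,2)$-colouring of $G$. By the symmetry of $\{x,y,z\}$, I may fix $w = x$ and $t = y$. Since $G$ is $4$-critical, take a $3$-colouring $f$ of $G-v$ with colours $\{0, 2, 4\}$. A case check of the intersection $N_{G_{7,2}}(f(x)) \cap N_{G_{7,2}}(f(y)) \cap N_{G_{7,2}}(f(z))$ shows this is non-empty whenever $\{f(x), f(y), f(z)\}$ uses at most two colours; so I may assume $\{f(x), f(y), f(z)\} = \{0,2,4\}$ and, after relabelling, that $f(x)=0$, $f(y)=2$, $f(z)=4$.

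Next I try to recolour $y$ using the full $(7,2)$-palette. The colours $c$ with $N_{G_{7,2}}(0) \cap N_{G_{7,2}}(c) \cap N_{G_{7,2}}(4) \neq \emptyset$ are exactly $\{0,4,5,6\}$, each giving $v = 2$ as the unique extension. Recolouring $y$ to $0$ or $6$ requires no $0$-coloured neighbour of $y$ in $G - v$, and recolouring $y$ to $4$ or $5$ requires no $4$-coloured neighbour. Hence if either $N_0(y) \cap V(G-v)$ or $N_4(y) \cap V(G-v)$ is empty, I can extend $f$ to a $(7,2)$-colouring of $G$ and contradict the hypothesis.

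The remaining case, where both $N_0(y)$ and $N_4(y)$ are non-empty in $G - v$, is the main obstacle. Pick $u \in N_0(y)$; the no-common-neighbour assumption yields $u \notin N(x) \cup \{x\}$. I will leverage this non-adjacency to modify $f$ so that one of $N_0(y)$ or $N_4(y)$ becomes empty without disturbing $x$'s colour. Following the reconfiguration style of Lemma \ref{reconfiguringpathsoflength3}, I will recolour members of $N(y) \setminus \{v\}$ to $(7,2)$-palette colours such as $5$ or $6$, which are compatible with both $2$ and $4$ (so that further neighbourhood conflicts can be handled in cascading fashion), and exploit that $u$ is at distance at least $2$ from $x$ to keep $x$'s colour equal to $0$ throughout the cascade. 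Once the appropriate neighbourhood of $y$ is cleared, $y$ itself is recoloured to one of $\{0,4,5,6\}$ and $v$ is set to $2$, producing the contradiction.

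For the distinctness claim, suppose $x_{w,t} = x_{w',t'} = c$ for two distinct pairs from $\{\{x,y\}, \{x,z\}, \{y,z\}\}$. Then $c$ is adjacent to at least three vertices of $\{x,y,z\}$, hence to all three. In any $3$-colouring $f$ of $G - v$, $f(c)$ is distinct from each of $f(x), f(y), f(z)$, so $\{f(x),f(y),f(z)\}$ uses at most two of the colours $\{0,2,4\}$. By the opening calculation, $v$ then admits a valid $(7,2)$-colour, extending $f$ to a $(7,2)$-colouring of $G$, a contradiction. Therefore $x_{w,t} \neq x_{w',t'}$ whenever $\{w,t\} \neq \{w',t'\}$.
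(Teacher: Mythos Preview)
Your opening reduction (forcing $\{f(x),f(y),f(z)\}=\{0,2,4\}$) and your distinctness argument are both correct; the latter is in fact cleaner than the paper's, which instead tracks the specific colour each $x_{w,t}$ must carry in the fixed $3$-colouring.

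The gap is in the main step. You propose to ``recolour members of $N(y)\setminus\{v\}$ to $(7,2)$-palette colours such as $5$ or $6$, which are compatible with both $2$ and $4$'', but $5$ is \emph{not} adjacent to $4$ in $G_{7,2}$, so already the stated plan is inaccurate. More seriously, no explicit cascade is given, and the natural attempts fail: recolouring all of $N_0(y)$ to $6$ leaves $y$'s neighbours in $\{4,6\}$, and then no colour in your target set $\{0,4,5,6\}$ is simultaneously adjacent to $4$ and $6$. Two-step cascades from $y$'s side hit the same wall, since $N_{G_{7,2}}(2)\cap N_{G_{7,2}}(6)=\{4\}$ and $N_{G_{7,2}}(2)\cap N_{G_{7,2}}(5)=\{0\}$ leave no room to move. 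Your sketch does not indicate how to get past this.

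The paper sidesteps the difficulty by cascading from $x$ rather than from $y$'s neighbourhood, and with the labels arranged as $f(x)=0$, $f(y)=4$, $f(z)=2$ (the swap of $y$ and $z$ relative to your choice matters). One recolours $N_4(N_2(x))\to 5$, then $N_2(x)\to 3$, then $x\to 1$, and finally sets $v\to 6$. The only way this cascade can touch $y$ is if $y\in N_4(N_2(x))$, i.e.\ if $y$ is adjacent to some $2$-coloured neighbour of $x$; the no-common-neighbour hypothesis rules this out directly. This is precisely the non-adjacency you isolated, but exploited one level deeper---at the second neighbourhood of $x$ rather than the first neighbourhood of $y$---which is what makes the arithmetic close.
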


\begin{proof}
Suppose without loss of generality that $xy \not \in E(G)$, and $xy$ does not lie in a $4$-cycle with $v$. Then by permuting colours if necessary, there is a $3$-colouring of $G-v$ such that $f(x) =0$, $f(y) =4$, and $f(z) =2$. Now change all vertices colours in $N_{4}(N_{2}(x))$ to $5$,  change all vertices colours in $N_{2}(x)$ to $3$ and change the colour of $x$ to $1$. Observe that the colour of $y$ did not change. Then colour $v$ with $6$ to obtain a $(7,2)$-colouring of $G$, a contradiction. Hence either $xy \in E(G)$, or there is a vertex in $N_{2}(x)$ which is adjacent to $y$, as desired. Uniqueness comes from the fact that we can assume the vertex in a $4$-cycle with $v,x,y$ is coloured $2$, and for any pair that we apply this argument to, we get a distinct colour, and hence the vertices are distinct. 
\end{proof}

We observe that if $x_{w,t}$ exists, it  may in fact be one of $\{x,y,z\}$. However if say $x_{x,z} =y$, then $xy \in E(G)$ and $yz \in E(G)$. 

\begin{obs}
\label{fancut}
Let $v$ be a vertex with $N(v) = \{x,y,z\}$.  Suppose that $G[\{x,y,z\}]$ contains at least two edges, with $t \in \{x,y,z\}$ having degree $2$ in $G[\{x,y,z\}]$. Then $G[\{x,y,z\}]$ contains exactly two edges, and for $w,r \in \{x,y,z\} - \{t\}$, $N(w) \cap N(r) = \{v,t\}$. 
\end{obs}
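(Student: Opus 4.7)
I will split the proof into two parts matching the two conclusions.

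For the first conclusion, the argument will be direct: if $G[\{x,y,z\}]$ contained a third edge $wr$, then $\{v,x,y,z\}$ would induce a copy of $K_{4}$ in $G$. Since $K_{4}$ has chromatic number four and every proper subgraph of a $4$-critical graph is $3$-colourable, this would force $G \cong K_{4}$. Under the standing assumption of this section that every component of the Gallai tree of $G$ is a path, $G$ is not $K_{4}$, so we obtain a contradiction; hence $G[\{x,y,z\}]$ has exactly the two edges $tw$ and $tr$, and in particular $wr \notin E(G)$.

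For the second conclusion, the containment $\{v,t\} \subseteq N(w) \cap N(r)$ is immediate from the edges $vw, vr, tw, tr$. For the reverse containment, I will suppose towards a contradiction that some vertex $s \in N(w) \cap N(r)$ lies outside $\{v, t\}$. The key step is to exploit $4$-criticality at the edge $ws$: the graph $G - ws$ admits a $3$-colouring $f$, and every such $f$ must satisfy $f(w) = f(s)$, since otherwise $f$ would be a proper $3$-colouring of $G$, contradicting $\chi(G) = 4$. After permuting colours so that $f(w) = f(s) = 0$, the edges $vw, wt, rs$ (all still present in $G - ws$) force $f(v), f(t), f(r) \in \{2,4\}$, while the edges $vr, vt, rt$ force these three values to be pairwise distinct. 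Three pairwise distinct colours cannot fit inside a two-element set, which is the desired contradiction.

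The only real decision in this plan is which edge to delete in order to invoke $4$-criticality. Deleting $v$ itself, or an edge inside the diamond $\{v,w,r,t\}$, still leaves an extension question to a $(7,2)$-colouring and does not immediately cut the palette down enough. Deleting the extra edge $ws$ instead is what makes the argument close: the standard collapse $f(w) = f(s)$ together with the $K_{4} - e$ structure on $\{v,w,r,t\}$ and the edge $rs$ traps three pairwise-adjacent uncoloured vertices inside a two-element palette, and the pigeonhole conclusion follows automatically. I do not anticipate any further obstacle beyond identifying this edge.
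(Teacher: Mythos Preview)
Your proposal is correct and follows essentially the same approach as the paper: for the second conclusion you delete the extra edge $ws$ (the paper's $xx_{x,z}$), force $f(w)=f(s)$ by $4$-criticality, and then observe that the remaining vertices $v,t,r$ form a triangle whose colours must all lie in $\{2,4\}$, which is impossible. The only cosmetic difference is in the first conclusion, where the paper phrases the contradiction as a clique cutset rather than a $K_{4}$ subgraph, but these are equivalent here.
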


\begin{proof}
Suppose not. Observe that $x,y$ and $z$ cannot induce a triangle as then we have a clique cutset in a $4$-critical graph. Thus without loss of generality, let $xy,yz \in E(G)$. Suppose there is a vertex $x_{x,z} \not \in \{v,y\}$ such that $xx_{x,z} \in E(G)$ and $zx_{x,z} \in E(G)$. 

Now let $f$ be a $3$-colouring of $G - \{xx_{x,z}\}$. Then $f(x) = f(x_{x,z})$ otherwise we have a $3$-colouring of $G$. Without loss of generality we may assume that $f(x) = 0$. Then $f(v) \neq 0$, so without loss of generality $f(v) =2$. But then $f(y) = f(z) = 4$, a contradiction as $yz \in E(G)$.  
\end{proof}

\subsection{The case where $x,y,z$ induces an edge}

For the subsection, we have a vertex $v$ with neighbours $x,y,z$ and we will assume that $xy \in E(G)$, $yz,xz \not \in E(G)$. Thus by Lemma \ref{doubleneighbourhoodreconfig} there are distinct vertices $x_{y,z}$ and $x_{x,z}$ where $x_{y,z}$ is adjacent to both $y$ and $z$, and $x_{x,z}$ is adjacent to both $x$ and $z$. Further $x_{y,z}$ and $x_{x,z}$ are not in $\{x,y,z\}$.  The goal of the subsection is to show that the neighbours of $v$ have large degree. We will prove stronger claims than what is necessary to deduce Theorem \ref{maintheorem}, but we believe the additional claims would be useful if trying to improve the bound on Theorem \ref{maintheorem}. We start by proving $\deg(x) \geq 4$ and $\deg(y) \geq 4$. The following observation is well known. 

\begin{obs}
\label{3colouringextension}
Let $w$ and $t$ be adjacent vertices of degree $3$. Suppose that $w$ and $t$ have a common neighbour $a$. Suppose that the other neighbour of $w$ is $b$, and the other neighbour  of $t$ is $c$. If there is a $3$-colouring $f$ of $G-w-t$ such that $f(b) \neq f(c)$, then $G$ has a $3$-colouring. In particular, $bc \not \in E(G)$.
\end{obs}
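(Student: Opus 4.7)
The plan is to extend the hypothesised $3$-colouring $f$ of $G - w - t$ directly to all of $G$, and then to deduce $bc \notin E(G)$ by contrapositive. Using the paper's convention that the three colours are $\{0,2,4\}$, any extension must assign $w$ a colour avoiding $f(a)$ and $f(b)$, assign $t$ a colour avoiding $f(a)$ and $f(c)$, and give $w$ and $t$ different colours.

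First I would split into cases according to whether $f(a)$ coincides with either of $f(b)$ or $f(c)$. If $f(a), f(b), f(c)$ are pairwise distinct, they exhaust $\{0,2,4\}$, so $w$ is forced to take $f(c)$ and $t$ is forced to take $f(b)$; these differ by hypothesis, so the extension is proper. If instead $f(a) = f(b)$, then $f(c) \neq f(a)$ (because $f(b) \neq f(c)$), $t$ is forced to the unique colour avoiding both $f(a)$ and $f(c)$, and $w$ can be given the remaining colour in $\{0,2,4\} \setminus \{f(a)\}$, which automatically differs from $f(t)$. The case $f(a) = f(c)$ is completely symmetric. In every case we obtain a proper $3$-colouring of $G$.

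For the ``In particular'' clause, suppose for contradiction that $bc \in E(G)$. By $4$-criticality, the proper subgraph $G - w - t$ admits a $3$-colouring $f$, and since $bc$ lies in $G - w - t$ we automatically have $f(b) \neq f(c)$. The previous paragraph then produces a $3$-colouring of $G$, contradicting $\chi(G) = 4$.

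The whole argument is routine case analysis followed by a contrapositive, and I do not expect a genuine obstacle; the only delicate point is confirming that in the three-distinct-values case the forced assignments $f(w) = f(c)$ and $f(t) = f(b)$ really do differ, which is precisely the hypothesis $f(b) \neq f(c)$.
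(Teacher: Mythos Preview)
Your proof is correct and follows essentially the same approach as the paper: a short case analysis on the value of $f(a)$ relative to $f(b)$ and $f(c)$, followed by the contrapositive for the $bc \notin E(G)$ clause. The paper normalises to $f(b)=0$, $f(c)=2$ and then checks the three possible values of $f(a)$ explicitly, whereas you organise the cases by whether $f(a)$ coincides with $f(b)$ or $f(c)$, but the content is identical.
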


\begin{proof}
Let $f$ be a $3$-colouring of $G-w-t$ so that $f(b) \neq f(c)$. Without loss of generality, suppose that $f(b) =0$ and $f(c) =2$. If $f(a) =2$, then colour $w$ with $4$ and $t$ with $0$. If $f(a) =0$, colour $w$ with $2$ and $t$ with $4$. If $f(a) =4$, then colour $w$ with $2$ and $t$ with $0$. In all cases, we get a $3$-colouring of $G$. 

To see that $bc \not \in E(G)$, if $bc \in E(G)$, then every $3$-colouring of $G-w-t$ has $f(b) \neq f(c)$, and extends to a $3$-colouring of $G$, contradicting $4$-criticality.
\end{proof}

\begin{lemma}
\label{keywheelcutlemma}
Let $w$ and $t$ be two vertices in $G$ both having degree $3$. Suppose that $w$ and $t$ share a common neighbour $a$. Suppose that $b$ is the other neighbour of $w$, and $c$ is the other neighbour of $t$. Then $ab \in E(G)$, and $ac \in E(G)$. 
\end{lemma}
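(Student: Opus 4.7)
The plan is to argue by contradiction: assume $ab\notin E(G)$ and build a $(7,2)$-coloring of $G$. The statement $ac\in E(G)$ then follows by the same argument with the roles of $(w,b)$ and $(t,c)$ swapped. Since $w,t$ are adjacent vertices of degree $3$ sharing the common neighbor $a$ (with respective third neighbors $b$ and $c$), Observation \ref{3colouringextension} applies, yielding $bc\notin E(G)$ together with the stronger fact that every $3$-coloring $f$ of $G-w-t$ satisfies $f(b)=f(c)$. Fix such a coloring using colors $\{0,2,4\}$, and normalize so that $f(b)=f(c)=0$. The argument then splits on the value $f(a)\in\{0,2,4\}$.

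If $f(a)=0$, I would extend directly by $f(w)=3$ and $f(t)=5$: around $w$ the neighbour colors are $(a,b,t)=(0,0,5)$ with circular differences $(3,3,2)$, and around $t$ they are $(a,c,w)=(0,0,3)$ with differences $(5,5,2)$, all in $[2,5]$, so this extends $f$ to a $(7,2)$-coloring of $G$, contradicting the hypothesis. If $f(a)=4$, a Kempe swap of colors $\{2,4\}$ on the component of $a$ in the $\{2,4\}$-subgraph of $G-w-t$ gives a new $3$-coloring with $f(a)=2$; because $b,c$ have color $0$ they are not in the swapped component, so $f(b)=f(c)=0$ persists. This reduces everything to the case $f(a)=2$.

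For $f(a)=2$, I apply the Kempe-style shift used repeatedly in the paper: recolor $b$ from $0$ to $1$, every vertex in $N_2(b)$ from $2$ to $3$, and every vertex in $N_4(N_2(b))$ from $4$ to $5$. Standard distance checks ($|1-3|=2$, $|3-5|=2$, and the remaining $0$- or $2$-colored neighbors of the shifted set sit at circular distance $\geq 2$ from the new colors) confirm this is a valid $(7,2)$-coloring of $G-w-t$. The crucial point is that the shift leaves $a$ and $c$ untouched: since $ab\notin E(G)$, $a\notin N_2(b)$, and since $f(a)=2$, $a\notin N_4(N_2(b))$; and $c$ has color $0$, so $c$ lies in neither set. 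Thus $a$ retains color $2$ and $c$ retains color $0$. Now extend by $f(w)=6$ and $f(t)=4$: around $w$ the neighbour colors $(a,b,t)=(2,1,4)$ give differences $(4,5,2)$, and around $t$ the colors $(a,c,w)=(2,0,6)$ give differences $(2,4,2)$, all in $[2,5]$. This $(7,2)$-coloring of $G$ contradicts the hypothesis, so $ab\in E(G)$; the symmetric argument yields $ac\in E(G)$.

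The main obstacle is ensuring the Kempe shift at $b$ in the case $f(a)=2$ leaves $a$ and $c$ undisturbed and remains a valid $(7,2)$-coloring; this is precisely where the contradiction assumption $ab\notin E(G)$ and its consequence $bc\notin E(G)$ are essential. Everything else reduces to arithmetic checks in $G_{7,2}$ of the type already performed several times earlier in the paper.
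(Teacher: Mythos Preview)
Your proof is correct and follows essentially the same approach as the paper's: reduce via Observation~\ref{3colouringextension} to $f(b)=f(c)=0$, handle $f(a)=0$ directly, reduce $f(a)=4$ to $f(a)=2$, and then use the $b\to 1$, $N_2(b)\to 3$, $N_4(N_2(b))\to 5$ shift followed by $f(w)=6$, $f(t)=4$. The only cosmetic differences are that in the $f(a)=0$ case the paper extends to an outright $3$-colouring (setting $f(w)=2$, $f(t)=4$) rather than a $(7,2)$-colouring, and the paper dispatches $f(a)=4$ by a global ``without loss of generality'' rather than your explicit Kempe swap.
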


\begin{proof}
Suppose not. Without loss of generality, we can assume that $ab \not \in E(G)$. Let $f$ be a $3$-colouring of $G-w-t$. Without loss of generality, we can assume that $f(b) =0$, which by Observation \ref{3colouringextension}, implies that $f(c) = 0$. If $f(a) =0$, then colouring $w$ with $2$ and $t$ with $4$ is a $3$-colouring of $G$.

 So without loss of generality assume that $f(a) = 2$. Now consider $N_{2}(b)$, and $N_{4}(N_{2}(b))$. Change the colour of all vertices in $N_{4}(N_{2}(b))$ to $5$, and change the colour of all the vertices in $N_{2}(b)$ to $3$, and finally change the colour of $b$ to $1$. Now as $ab \not \in E(G)$, we can now colour $w$ with $6$ and $t$ with $4$, contradicting that $G$ has no $(7,2)$-colouring. Thus $ab \in E(G)$, and by the same argument, we have that $ac \in E(G)$.   
\end{proof}

\begin{cor}
\label{degreefourclaim}
Both $\deg(x) \geq 4$ and $\deg(y) \geq 4$. 
\end{cor}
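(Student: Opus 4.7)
The plan is to prove both inequalities by direct appeal to Lemma \ref{keywheelcutlemma}, using $v$ itself as one of the two degree-$3$ vertices in the hypothesis of that lemma.

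First I would suppose for contradiction that $\deg(x)=3$. The neighborhood of $x$ must then consist of exactly three vertices. Two of them are forced: $v$ (since $x \in N(v)$) and $y$ (since $xy \in E(G)$ by assumption). The third neighbor must be $x_{x,z}$: by Lemma \ref{doubleneighbourhoodreconfig} the vertex $x_{x,z}$ is adjacent to $x$ and satisfies $x_{x,z} \neq v$; moreover $x_{x,z} \neq x$ trivially and $x_{x,z} \neq y$, because if $x_{x,z}=y$ then $y$ would be adjacent to $z$, contradicting our standing assumption that $yz \notin E(G)$. Hence $N(x) = \{v, y, x_{x,z}\}$.

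Now I would apply Lemma \ref{keywheelcutlemma} to the pair $(w,t) := (v,x)$. Both vertices have degree $3$, they are adjacent, and they share the common neighbor $a := y$. The remaining neighbor of $v$ is $b := z$, while the remaining neighbor of $x$ is $c := x_{x,z}$. The conclusion of Lemma \ref{keywheelcutlemma} forces $ab = yz \in E(G)$, which directly contradicts the hypothesis $yz \notin E(G)$ of the subsection. Therefore $\deg(x) \geq 4$.

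For $\deg(y) \geq 4$, the argument is completely symmetric: the roles of $x$ and $y$ are interchangeable since both are incident to the edge $xy$ and neither is adjacent to $z$. Assuming $\deg(y)=3$ forces $N(y) = \{v, x, x_{y,z}\}$ by the same reasoning as above (using $x_{y,z} \neq v$ and ruling out $x_{y,z} = x$ via $xz \notin E(G)$), and applying Lemma \ref{keywheelcutlemma} to $(v,y)$ with common neighbor $x$ yields $xz \in E(G)$, another contradiction. I do not anticipate any real obstacle; the whole argument is a two-line deduction from the previously established lemma, and the only thing to be careful about is verifying that the third neighbor of $x$ (respectively $y$) really is $x_{x,z}$ (respectively $x_{y,z}$), which follows from the non-edges $yz, xz \notin E(G)$.
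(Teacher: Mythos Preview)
Your proof is correct and follows essentially the same approach as the paper: assume one of $x,y$ has degree $3$, apply Lemma \ref{keywheelcutlemma} to that vertex together with $v$ (using their common neighbour in $\{x,y\}$), and read off the forbidden edge $yz$ or $xz$. The paper's proof is terser since the conclusion $ab\in E(G)$ of Lemma \ref{keywheelcutlemma} only requires knowing the ``other'' neighbour of $v$ (namely $z$), so explicitly identifying the third neighbour of $x$ as $x_{x,z}$ is unnecessary---but it is not wrong.
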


\begin{proof}
Suppose towards a contradiction that $y$ has degree $3$. Then by Lemma \ref{keywheelcutlemma}, $xz \in E(G)$.
But we assumed at the start of the section, that the vertices $x,y,z$ induce exactly one edge, and we now have edges $xy$ and $xz$, a contradiction. 
\end{proof}

Now we make a straightforward observation. 

\begin{obs}
Both $yx_{x,z} \not \in E(G)$, and $xx_{y,z} \not \in E(G)$.
\end{obs}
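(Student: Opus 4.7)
The plan is to derive a contradiction from the assumption that $yx_{x,z} \in E(G)$, by showing that any proper $3$-coloring of $G - v$ (which $4$-criticality guarantees to exist) would fail to color $x_{x,z}$.

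First, I will suppose for contradiction that $yx_{x,z} \in E(G)$. By Lemma \ref{doubleneighbourhoodreconfig}, the vertex $x_{x,z}$ is already adjacent to both $x$ and $z$ and is distinct from $v$. Under the contrary assumption, $x_{x,z}$ is therefore adjacent to each of $x$, $y$, and $z$. Next, since $G$ is $4$-critical, the graph $G - vx$ admits a proper $3$-coloring, whose restriction to $V(G) \setminus \{v\}$ is a proper $3$-coloring $f$ of $G - v$.

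Because $\chi(G) = 4$, no such $f$ extends to a proper $3$-coloring of $G$, which forces the three colors $f(x), f(y), f(z)$ to exhaust $\{0,2,4\}$. But $x_{x,z} \in V(G - v)$ is adjacent to each of $x$, $y$, and $z$, so no color of $\{0,2,4\}$ is available at $x_{x,z}$ under $f$, contradicting that $f$ is a proper coloring of $G - v$. Hence $yx_{x,z} \notin E(G)$. The second statement $xx_{y,z} \notin E(G)$ will then follow by the same argument after exchanging the roles of $x$ and $y$, which is permitted because the standing assumption of the subsection ($xy \in E(G)$ and $xz, yz \notin E(G)$) is symmetric in $x$ and $y$, and this exchange swaps $x_{x,z}$ with $x_{y,z}$.

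There is essentially no main obstacle in this proof; the conceptual point is simply that a common neighbor of all three of $x$, $y$, and $z$ would be forced to play the same chromatic role as $v$ in any $3$-coloring of $G - v$, and such a ``second copy'' of $v$ cannot coexist with $v$ when $\chi(G) = 4$. In particular, no appeal to any $(7,2)$-coloring reconfiguration argument is needed for this observation; the contradiction is already visible at the level of ordinary $3$-colorings.
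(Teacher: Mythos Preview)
Your proof is correct and follows essentially the same approach as the paper: the paper observes that $yx_{x,z}\in E(G)$ would give $N(v)\subseteq N(x_{x,z})$ and simply states that this cannot occur in a $4$-critical graph, whereas you unpack that same fact explicitly via a $3$-colouring of $G-v$. The arguments are logically identical; yours is just more detailed.
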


\begin{proof}
Suppose that $yx_{x,z} \in E(G)$. Then $N(v) \subseteq N(x_{x,z})$, which does not occur in a $4$-critical graph, a contradiction. An analogous argument works for $xx_{y,z}$. 
\end{proof}

\begin{obs}
If the Gallai Tree of $G$ has no claw component, then one of $z,x_{x,z}$ or $x_{y,z}$ has degree at least $4$.
\end{obs}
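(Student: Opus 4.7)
The plan is to argue by contradiction: suppose that all three of $z$, $x_{x,z}$, $x_{y,z}$ have degree exactly $3$ in $G$, and show this forces $z$ to be a vertex of degree $3$ in the Gallai tree, which contradicts the hypothesis of no claw component via Corollary \ref{reductiontopaths}.

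First I would collect what the setup already gives. By Corollary \ref{degreefourclaim} we have $\deg(x), \deg(y) \geq 4$, so among the three neighbours $x, y, z$ of $v$ only $z$ can possibly lie in the Gallai tree. The preamble to this subsection records that $x_{x,z}$ and $x_{y,z}$ are distinct and do not lie in $\{x, y, z\}$, and Lemma \ref{doubleneighbourhoodreconfig} guarantees that each of them is different from $v$. Hence $v$, $x_{x,z}$, $x_{y,z}$ are three distinct neighbours of $z$, and under our contradiction hypothesis $\deg(z) = 3$ they comprise all of $N(z)$.

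Next, by the contradiction hypothesis each of these three neighbours of $z$ has degree $3$ in $G$ (for $v$ this is the running assumption). Hence all three neighbours of $z$ belong to the Gallai tree, so $z$ itself has degree $3$ in the Gallai tree. By our standing assumption from Section \ref{Oddcycle} the Gallai tree is acyclic, so Corollary \ref{reductiontopaths} forces the component of the Gallai tree containing $z$ to be either a path or a claw. Since $z$ has degree $3$ in that component, it cannot be a path, and therefore must be a claw, contradicting the hypothesis of the observation.

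There is essentially no real obstacle here: the argument is bookkeeping on top of the reductions already in place. The only delicate point is the distinctness of $v$, $x_{x,z}$, $x_{y,z}$ as neighbours of $z$, and this is handed to us immediately by the uniqueness clause of Lemma \ref{doubleneighbourhoodreconfig} together with the observations made at the start of this subsection.
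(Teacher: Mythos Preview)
Your proof is correct and follows essentially the same route as the paper. The paper's proof is a single sentence (``If not, then $v,z,x_{x,z}$ and $x_{y,z}$ form a claw in the Gallai Tree''), and you have simply unpacked why those four vertices must constitute an entire claw component: $z$ has degree $3$ in the Gallai Tree, so by Corollary~\ref{reductiontopaths} its component cannot be a path and is therefore the claw on $\{v,z,x_{x,z},x_{y,z}\}$.
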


\begin{proof}
If not, then $v,z,x_{x,z}$ and $x_{y,z}$ form a claw in the Gallai Tree. 
\end{proof}

Now we want to understand what happens when $x$ and $y$ share a neighbour that is not $v$. 

\begin{lemma}
\label{commonneighbour}
Suppose $x$ and $y$ have a common neighbour $w$ that is not $v$. Then at least one of the following occurs:
\begin{itemize}
\item{There is at least one $t \in \{x,y\}$ such that $\deg(t) \geq 5$.}
\item{There is at least one $t \in \{w,z\}$ such that $\deg(t) \geq 4$.}
\end{itemize}
\end{lemma}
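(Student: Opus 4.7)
The plan is to argue by contradiction. Suppose $\deg(x) = \deg(y) = 4$ and $\deg(w) = \deg(z) = 3$; I will produce a $(7,2)$-colouring of $G$. Write $N(x) = \{v,y,w,x^*\}$, $N(y) = \{v,x,w,y^*\}$ and $N(w) = \{x,y,w^*\}$. Two preliminaries are immediate. First, $vw \notin E(G)$, since $w \notin N(v) = \{x,y,z\}$ (indeed $w \ne z$ as $xw \in E(G)$ while $xz \notin E(G)$). Second, $zw \notin E(G)$: otherwise $N(w) = \{x,y,z\} = N(v)$, so the $3$-colouring of $G - v$ guaranteed by $4$-criticality would extend to a $(7,2)$-colouring of $G$ via $f(v) := f(w)$, contradicting that $G$ has no $(7,2)$-colouring.

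Next, take any $3$-colouring $f$ of $G-v$. Because $G$ admits no $(7,2)$-colouring, the common $G_{7,2}$-neighbourhood of $\{f(x), f(y), f(z)\}$ must be empty; among triples of colours from $\{0,2,4\}$ this forces $\{f(x), f(y), f(z)\} = \{0,2,4\}$. Assume $f(x) = 0$, $f(y) = 2$, $f(z) = 4$. Since $\{x,y,w\}$ is a triangle, $f(w) = 4$, and the constraints at $x_{y,z}$ and $x_{x,z}$ force $f(x_{y,z}) = 0$ and $f(x_{x,z}) = 2$. The crux is to modify $f$ by a Kempe swap so that $\{f'(x), f'(y), f'(z)\}$ has a non-empty common $G_{7,2}$-neighbourhood. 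Let $K_z$ denote the $\{0,4\}$-Kempe chain of $G-v$ containing $z$. If $x \notin K_z$, swapping colours on $K_z$ yields a $3$-colouring $f'$ with $f'(x) = 0$, $f'(y) = 2$, $f'(z) = 0$; since $N_{G_{7,2}}(0) \cap N_{G_{7,2}}(2) = \{4,5\}$, setting $f'(v) := 4$ extends $f'$ to a $(7,2)$-colouring of $G$, a contradiction. The symmetric argument applied to the $\{2,4\}$-Kempe chain containing $z$ disposes of the case where $y$ lies outside that chain.

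The main obstacle is the remaining case where $x$ lies in the $\{0,4\}$-chain of $z$ and $y$ lies in the $\{2,4\}$-chain of $z$ simultaneously, so that both chains close back through the triangle $xyw$. Here the plan is to exploit the flexibility at the degree-$3$ vertex $w$: first recolour $w$ from $4$ to $5$, which is a valid $(7,2)$-recolouration because $N_{G_{7,2}}(5) = \{0,1,2,3\}$ contains $f(x) = 0$, $f(y) = 2$, and $f(w^*) \in \{0,2\}$. This breaks the rigid $\{0,2,4\}$-pattern on the triangle and, in particular, disrupts the alternating paths that previously linked $x$ and $y$ to $z$. I would then re-examine the $\{0,4\}$-Kempe chain of $z$ in the new $(7,2)$-colouring; whenever it no longer contains $x$, the earlier swap argument applies. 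If an obstruction persists, I would finish by a cascading reconfiguration in the style of Proposition \ref{closedneighbourhoods}: shift $f(x_{y,z})$ from $0$ to $4$ (valid when $x_{y,z}$'s other neighbours are coloured $2$, propagating through the bounded local structure when not) and then $f(z)$ from $4$ to $6$, producing the triple $\{0,2,6\}$ on $(x,y,z)$ whose common $G_{7,2}$-neighbour $\{4\}$ supplies a colour for $v$. The hard part will be checking that this cascade terminates cleanly; this leans on $\deg(z) = \deg(w) = 3$ to bound the affected vertices, on the triangle structure $\{x,y,w\}$ to pin colours along the cascade, and on Lemma \ref{doubleneighbourhoodreconfig} to keep the vertices $x_{y,z}, x_{x,z}$ distinct and under control.
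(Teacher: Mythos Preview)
Your approach is genuinely different from the paper's and is essentially correct, but you have made it harder than it needs to be. The ``main obstacle'' case you worry about is in fact vacuous. Recall that $N(x)=\{v,y,w,x_{x,z}\}$ and $N(y)=\{v,x,w,x_{y,z}\}$ (your $x^{*}$ and $y^{*}$ are forced to be $x_{x,z}$ and $x_{y,z}$). In $G-v$ the only $\{0,4\}$-neighbour of $x$ is $w$, and the only $\{0,4\}$-neighbour of $w$ besides $x$ is $w^{*}$; hence $x$ can lie in the $\{0,4\}$-Kempe component of $z$ only if $f(w^{*})=0$. Symmetrically, $y$ lies in the $\{2,4\}$-Kempe component of $z$ only if $f(w^{*})=2$. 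These two requirements are incompatible, so at least one of your two Kempe swaps always succeeds and yields a $3$-colouring of $G$, finishing the proof right there. The cascading reconfiguration you sketch for the ``hard part'' is unnecessary; as written it is also not safe, since a $\{0,4\}$-swap performed after recolouring $w$ to $5$ may place colour $4$ on a neighbour of $w$, and $4$ and $5$ are non-adjacent in $G_{7,2}$.

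For comparison, the paper deletes $\{v,x,y\}$ rather than just $v$, fixes a $3$-colouring of the rest, and runs a case analysis on the colours of $w$, $x_{x,z}$, $x_{y,z}$ and $z$, using local recolourings (shifting a neighbourhood $N_{4}(N_{2}(w))$ to $5$, etc.) to force the degree conclusions. That route yields some extra structural by-products but is longer; your single-vertex-deletion plus Kempe-chain argument is cleaner once you observe that the bottleneck case cannot occur.
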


\begin{proof}
Suppose none of the above conditions occur. This implies that  $\deg(x) = 4$ and $\deg(y)=4$. Let $f$ be a $3$-colouring of $G-\{v,x,y\}$. Without loss of generality suppose that $f(w) = 0$.

\textbf{Case 1: Either $f(x_{x,z}) = 0$ or $f(x_{y,z}) = 0$}

Without loss of generality suppose that $f(x_{x,z}) =0$. Then $f(z) \neq 0$. Colour $v$ with $0$. There exists at least one available colour for $y$, so colour $y$ with this colour, and then the neighbourhood of $x$ sees at most two colours, and so there is a colour available for $x$, thus we get a $3$-colouring of $G$. A similar argument works when $f(x_{y,z}) =0$. 

\textbf{Case 2: $f(x_{x,z}) = 2$ and $f(x_{y,z}) =4$}

Observe in this case that $f(z) = 0$. We claim that either $wx_{x,z} \in E(G)$, or there is a vertex $x_{w,x_{y,z}}$ coloured $2$ adjacent to both $w$ and $x_{y,z}$. If not, change the colour of all vertices in $N_{4}(N_{2}(w))$ to $5$, change the colour of all vertices in $N_{2}(w)$ to $3$, and change the colour of $w$ to $1$. Then colour $x$ with $4$, $y$ with $6$, and $v$ with $2$.

Now we claim that $w$ is adjacent to a vertex coloured $4$. If not, change the colour of $w$ to $4$. Then colour $x$ with $0$, $y$ with $2$, and $v$ with $4$. From this, we deduce that $w$ is adjacent to a vertex coloured $4$ and a vertex coloured $2$, and hence $\deg(w) \geq 4$. 

\textbf{Case 3: $f(x_{x,z})=2$ and $f(x_{y,z}) =2$}

Suppose $f(z) = 4$. In this case we claim that both $wx_{x,z} \in E(G)$ and $wx_{y,z} \in E(G)$. Suppose without loss of generality that $wx_{x,z} \not \in E(G)$. Then change the colour of all vertices in $N_{4}(N_{2}(w))$ to $5$, change the colour of all vertices in $N_{2}(w)$ to $3$, and change the colour of $w$ to $1$. Then colour $x$ with $4$, $y$ with $6$ and $v$ with $2$. Thus in this case $\deg(w) \geq 4$ (in fact, if this case occurs then the graph is isomorphic to a $C_{6}$-expansion of $K_{4}$)

Now suppose that $f(z) = 0$. If $\deg(z) = 3$, then $z$ is not adjacent to a vertex coloured $4$. Hence we can change the colour of $z$ to $4$, and apply the above argument. Thus $\deg(z) \geq 4$.

\end{proof}

Now we will want to understand what happens when $x$ and $y$ do not share a neighbour and have small degree. 
\begin{lemma}
\label{pivotalcommonneighbourlemma}
Suppose  $x$ and $y$ do not share a common neighbour other than $v$. Let $x'$ and $y'$ be the neighbours of $x$ and $y$ that are not $x_{x,z}$ and $x_{y,z}$. Then at least one of the following occurs.

\begin{itemize}
\item{There exists a $t \in \{x,y,z\}$ such that $\deg(t) \geq 5$.}
\item{The edge $x'y' \in E(G)$,  at most one of $x'$ or $y'$ have degree $3$, and either $\deg(z) \geq 4$, or both $x_{x,z}$ and $x_{y,z}$ has degree $4$.}
\item{The edge $x'y' \not \in E(G)$, there exists a $t \in \{x_{x,z},y'\}$ such that $\deg(t) \geq 4$ and a $w \in \{x_{y,z},x'\}$ such that $\deg(w) \geq 4$.}
\item{The edge $x'y' \not \in E(G)$, $\deg(z) \geq 4$, and at least one of $x_{y,z}$ or $x_{x,z}$ has degree $4$.}
\end{itemize}

\end{lemma}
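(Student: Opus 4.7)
The plan is to argue by contradiction: assume none of the four bulleted conclusions holds, and produce a $(7,2)$-coloring of $G$, contradicting the hypothesis that $G$ has no such coloring. Failure of the first bullet combined with Corollary~\ref{degreefourclaim} forces $\deg(x) = \deg(y) = 4$, so $x'$ and $y'$ are unique, and since $x$ and $y$ share no neighbor besides $v$ by hypothesis, the four vertices $x', y', x_{x,z}, x_{y,z}$ are pairwise distinct. The starting point will be an arbitrary $3$-coloring $f$ of $G - \{v, x, y\}$ with color set $\{0, 2, 4\}$, which exists by $4$-criticality; the goal is then to extend $f$ to a $(7,2)$-coloring of $G$ in all possible color patterns, using the reconfiguration trick employed already in Lemma~\ref{doubleneighbourhoodreconfig} whenever a direct extension fails: at a vertex $w$ with $f(w) = 0$, recolor each vertex of $N_4(N_2(w))$ to $5$, each vertex of $N_2(w)$ to $3$, and then $w$ itself to $1$ or $6$. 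Provided $w$ has small degree (which is ensured in our setting by the negations of the remaining bullets), this recoloring stays local to $w$'s neighborhood and opens up new colors near $v$, $x$, $y$.

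I would then split on whether $x'y' \in E(G)$. When $x'y' \in E(G)$, failure of bullet~$2$ forces either (a) both $x'$ and $y'$ have degree $3$, or (b) $\deg(z) = 3$ together with at least one of $x_{x,z}, x_{y,z}$ having degree different from $4$. The edge $x'y'$ restricts the joint pattern $(f(x'), f(y'))$ to only a handful of essentially distinct cases, and in each I would either extend directly, or, if no extension is possible, apply the reconfiguration trick at whichever vertex among $\{x', y', z, x_{x,z}, x_{y,z}\}$ is guaranteed to have degree $3$. When $x'y' \notin E(G)$, failure of bullets~$3$ and~$4$ forces, after possibly swapping the roles of $(x, x_{x,z}, x')$ and $(y, x_{y,z}, y')$, that both $x_{x,z}$ and $y'$ have degree $3$, and moreover that either $\deg(z) = 3$ or neither of $x_{x,z}, x_{y,z}$ has degree exactly $4$. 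Here I would run reconfigurations at $x_{x,z}$ and $y'$ in sequence, using their degree-$3$ hypothesis to keep the changes from propagating away from $\{v, x, y\}$, and verify that every surviving color pattern admits an extension.

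The hard part, as in Lemma~\ref{commonneighbour} but more pronounced, is the combinatorial explosion: the quintuple $(f(x'), f(y'), f(x_{x,z}), f(x_{y,z}), f(z))$ can take up to $3^5$ patterns, and reconfigurations at different vertices may interact because $x', y', x_{x,z}, x_{y,z}, z$ are themselves permitted to share neighbors. The main technical care lies in verifying, for each pattern, that a reconfiguration at a small-degree vertex does not disturb the colors at the other four vertices in a way that blocks the subsequent extension to $\{v, x, y\}$; the distinctness of $x', y', x_{x,z}, x_{y,z}$ given by the no-common-neighbor hypothesis, together with the degree constraints forced by the negated bullets, is exactly what makes this case-by-case verification tractable.
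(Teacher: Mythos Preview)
Your proposal follows essentially the same approach as the paper: assume all four bullets fail (so in particular $\deg(x)=\deg(y)=4$), take a $3$-colouring $f$ of $G-\{v,x,y\}$, and in each colour pattern either extend $f$ to a $(7,2)$-colouring of $G$ or derive one of the bulleted degree conditions, yielding a contradiction.

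The organization differs. The paper does not pre-split on whether $x'y'\in E(G)$; instead it normalizes $f(x')=0$, branches first on $f(y')$, then on whether $f(x_{x,z})=f(x_{y,z})$, and finally on $f(z)$. The status of the edge $x'y'$ surfaces only in one subcase, where it is forced as the obstruction to a specific reconfiguration at $x'$. Your structural-first split is also workable, but the paper's colour-first organization is a bit more economical: many patterns (e.g.\ $f(x_{x,z})=f(x')$, or $\{f(x'),f(x_{x,z})\}=\{f(y'),f(x_{y,z})\}$) extend to a genuine $3$-colouring of $G$ and are dispatched immediately, so the reconfiguration argument is only invoked on the small residue of patterns that survive.

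One caution on your framing: the reconfiguration trick (send $N_4(N_2(w))\to 5$, $N_2(w)\to 3$, $w\to 1$) does not itself require $w$ to have degree $3$; it always yields a valid $(7,2)$-colouring of $G-\{v,x,y\}$. The degree hypotheses are spent later, either to argue that a reconfiguration at $w$ did \emph{not} recolour some other vertex (because the required path through $N_2(w)$ would witness an extra neighbour of $w$), or to perform a direct single-vertex recolour (e.g.\ if $\deg(z)=3$ and $z$ sees only colours $0,2$, recolour $z$ to $4$). So your phrase ``degree $3$ keeps the changes from propagating'' slightly mislocates where the hypothesis is used; in practice the paper often reconfigures at $x'$ regardless of its degree and then reads off an adjacency (hence a degree bound) from the failure of the extension.
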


\begin{proof}
Suppose none of the conditions hold. In particular this implies that $\deg(x) = \deg(y) = 4$, and $\deg(z) \leq 4$. 

Consider a $3$-colouring of $G- \{v,x,y\}$. Without loss of generality we may assume that $f(x') = 0$. We consider cases.

\textbf{Case 1: $f(y') = 0$}

Observe that if this occurs, then $x'y' \not \in E(G)$, as $f(x') = f(y')$. 

\textbf{Subcase 1: $f(x_{x,z}) = f(x_{y,z})$}

 If $f(x_{x,z}) = 0$, then as $zx_{x,z} \in E(G)$, $f(z) \neq f(x_{x,z})$. Thus colour $x$ and $y$ with $2$ and $4$, and colour $v$ with $0$. Therefore $f(x_{x,z}) \neq 0$.
 
 Now suppose that $f(x_{x,z}) = 2$. We claim that $x'$ and $y'$ are adjacent to a vertex coloured $4$. Suppose not and without loss of generality suppose that $x'$ has no neighbours coloured $4$. Change the colour of $x'$ to $4$. Then colour $x$ with $0$, $y$ with $4$, and since $f(z) \neq 2$, there is an available colour for $v$, a contradiction.
 
 \textbf{Subsubcase 1: $f(z) = 4$}

  We claim that $x'x_{y,z} \in E(G)$. If not, change the colour of all vertices in $N_{4}(N_{2}(x'))$ to $5$, change the colour of all vertices in $N_{2}(x')$ to $3$, and the colour of $x'$ to $1$. Then colour $x$ with $6$, $y$ with $4$, and $v$ with $2$. By an analogous argument, $y'x_{x,z} \in E(G)$. 
 
 Now we claim that either $x_{x,z}$ is adjacent to a vertex coloured $0$ that is not $y'$, or $y'$ is adjacent to a vertex coloured $2$ that is not $x_{x,z}$. If not, exchange the colours of $x_{x,z}$ and $y'$. Then colour $y$ with $4$, $x$ with $2$, and $v$ with $0$. Thus either $\deg(x_{x,z}) \geq 4$, or $\deg(y') \geq 4$. By an analogous argument, either $\deg(x_{y,z}) \geq 4$ or $\deg(x') \geq 4$, a contradiction. 
 
\textbf{Subsubcase 2: $f(z) = 0$}

Observe that if $z$ is not adjacent to a vertex coloured $4$, then we can simply change the colour of $z$ to $4$ and apply the above case analysis to conclude there is a $t \in \{x_{x,z},y'\}$ such that $\deg(t) \geq 4$, and a $w \in \{x_{y,z},x'\}$ where $\deg(w) \geq 4$. So we can assume that $z$ is adjacent to a vertex coloured four, and hence $\deg(z) \geq 4$.  

We claim that both $x_{x,z}$ and $x_{y,z}$ are adjacent to a vertex coloured $4$. Suppose $x_{x,z}$ is not adjacent to a vertex coloured $4$. Then change the colour of $x_{x,z}$ to $4$ and the colour of $x'$ to $6$. Then colour $x$ with $1$, $y$ with $5$ and $v$ with $3$. Hence both $x_{x,z}$ and $x_{y,z}$ are adjacent to a vertex coloured $4$. 

Now consider the graph induced by the colour classes $0$ and $2$. Let $C$ be the component of this graph containing $z$. If this component is only $z,x_{x,z}$ and $x_{y,z}$, then colour $x_{x,z}$ and $x_{y,z}$ with $0$ and $z$ with $2$. Then by a previous case, we obtain a $3$-colouring. Thus either $z$ is adjacent to a vertex coloured $2$ that is not $x_{x,z}$ and $x_{y,z}$, or one of $x_{x,z}$ and $x_{y,z}$ is adjacent to a vertex coloured $0$ that is not $z$. In the case $z$ is adjacent to a vertex coloured $0$ that is not $z$, then $\deg(z) \geq 5$. Otherwise, at least one of $x_{y,z}$ or $x_{x,z}$ has degree $4$.

We do not consider the case where $f(x_{x,z}) = 4$ as it follows a similar analysis as above.

\textbf{Subcase 2: $f(x_{x,z}) \neq f(x_{y,z})$}

 First suppose that $f(x_{x,z}) = 0$. If $f(x_{y,z}) = 2$, then $f(z) = 4$, and we can extend to a $3$-colouring by colouring $x$ with $2$, $y$ with $4$, and $v$ with $0$. A similar argument works if $f(x_{y,z}) = 4$. Additionally, similar arguments work if $f(x_{y,z}) = 0$.
 
  Thus without loss of generality $f(x_{x,z}) = 2$ and $f(x_{y,z}) = 4$. Thus $f(z) = 0$. In this case, change the colour of $N_{4}(N_{2}(x'))$ to $5$, $N_{2}(x')$ to $3$, and $x'$ to $1$. Then colour $y$ with $2$, $x$ with $6$, and $v$ with $4$. 

\textbf{Case 2: $f(y') = 2$} 

\textbf{Subcase 1: $f(x_{x,z}) = f(x_{y,z})$}

 Suppose that $f(x_{x,z}) = 0$. Then $f(z) \in \{2,4\}$. Colour $y$ with $4$ and $x$ with $2$, and colour $v$ any available colour. A similar argument shows that if $f(x_{x,y}) = 2$, we can always extend to a $3$-colouring. Therefore we can assume that $f(x_{x,z}) = 4$. Observe that $f(z) \neq 4$, and hence colour $v$ with $4$, $x$ with $2$ and $y$ with $0$, a contradiction.

\textbf{Subcase 2: $f(x_{x,z}) \neq f(x_{y,z})$}

 First suppose $f(x_{x,z}) = 0$. If $f(x_{y,z}) =2$, then $f(z) = 4$, and colour $y$ with $4$, $x$ with $2$, and $v$ with $0$. A similar colouring works when $f(x_{y,z}) = 4$. Thus $f(x_{x,z}) \neq 0$, and similarly we can assume that $f(x_{y,z}) \neq 2$. 

Now suppose $f(x_{x,z}) = 2$. If $f(x_{y,z}) = 4$, then colour $x$ with $2$, $y$ with $0$ and $v$ with $4$. Hence, $f(x_{y,z}) =0$ and thus $f(z) = 4$. Now suppose that $x'y' \not \in E(G)$. In this case, change the colour of all vertices in $N_{4}(N_{2}(x'))$ to $5$, change the colour of all vertices in $N_{2}(x')$ to $3$ and $x'$ to $1$. Then colour $x$ with $6$, $y$ with $4$, and $v$ with $2$, a contradiction. Thus $x'y' \in E(G)$.

 Similarly, if $x_{x,z}x_{y,z} \not \in E(G)$, then we change the colour of all vertices in  $N_{4}(N_{2}(x_{x,z}))$ to $5$,  the colour of all vertices in $N_{2}(x_{x,z})$ to $3$, and the colour of $x_{x,z}$ to $1$. Then colour $x$ with $4$, $y$ with $6$ and $v$ with $0$.

Now we claim that both $x'$ and $y'$ are adjacent to a vertex coloured $4$. If either $x'$ or $y'$ is not adjacent to a vertex of degree $4$, simply change one of the vertices to colour $4$, and then extend to a $3$-colouring using the same analysis as before.

Now we claim that either $x'$ is adjacent to a vertex coloured $2$ which is not $y'$, or $y'$ is adjacent to a vertex coloured $0$ which is not $x'$. If not, then simply exchange the colours on $x$ and $y$. But now we can extend to a $3$-colouring, a contradiction.

Thus it follows that at least one of $x'$ or $y'$ has degree $4$, and $x'y' \in E(G)$. 

Observe that as $x_{x,z},x_{y,z}$ and $z$ induce a triangle. If $\deg(z) = 3$, then note that $G[\{z,x_{x,z},x_{y,z}\}]$ induces exactly one edge, and then $\deg(x_{x,z}) \geq 4$ and $\deg(x_{y,z}) \geq 4$ by Observation \ref{degreefourclaim}. Otherwise $\deg(z) \geq 4$.

Lastly $f(x_{x,z}) = 4$. Then $f(x_{y,z}) = 0$ otherwise we use a previous case. Thus $f(z) = 2$. Then we can extend to a $3$-colouring with $x$ coloured $2$, $y$ coloured $4$, and $v$ coloured $0$. 
\end{proof}

\subsection{Long paths in the Gallai Tree}

We need to gain more understanding of long paths in the Gallai Tree. We start off with a Corollary of Lemma \ref{reconfiguringpathsoflength3}.

\begin{cor}
\label{alternatingpath}
Let $P$ be a path with at least three vertices where all vertices in $P$ have degree $3$. Let $V(P) = \{v_{0},\ldots,v_{n}\}$ and $E(P) = \{v_{i}v_{i+1} \, | \, i \in \{1,\ldots,n-1\}\}$. Let $v'_{0},v''_{0}$, $v'_{n},v''_{n}$ be the neighbours of $v_{0}$ and $v_{n}$ which are not in $P$, and let $v_{1}'$ be the neighbour of $v_{1}$ not in $P$. Then there is a $w \in \{v'_{0},v''_{0}\}$ and a $t \in \{v'_{n},v''_{n}\}$ such that given a bipartition $(A,B)$ of $P \cup \{w,t\}$ where $v_{0} \in A$ and $v_{1} \in B$, all vertices in $B$ are adjacent to $v'_{1}$, and all vertices in $A$ are adjacent to the vertex $w'$ in $\{v'_{0},v''_{0}\} - w$.

Further, for any $q \in \{w',v_{1}'\}$ and any $p \in \{w,t\}$, either $pq \in E(G)$, or there is a vertex $x_{p,q}$ such that $x_{p,q}$ is adjacent to both $p$ and $q$, and does not lie on $P$.   
\end{cor}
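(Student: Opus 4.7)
The plan is to iteratively apply Lemma~\ref{reconfiguringpathsoflength3} to each consecutive triple $(v_{i-1}, v_i, v_{i+1})$ along $P$, using degree bounds to force a canonical labeling of the external neighbours, and then invoke the \emph{Further} clause of that same lemma at the two endpoint triples to produce the common neighbours in the last assertion. Throughout I use the fact that the external neighbour $v_j'$ of any interior path vertex $v_j$ has degree at least $4$, since $P$ is a Gallai-tree component, while path vertices themselves have degree exactly $3$.

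For a triple $(v_{i-1}, v_i, v_{i+1})$ in which both $v_{i-1}$ and $v_{i+1}$ lie in the interior of $P$, in the role of $(x,y,z)$ of Lemma~\ref{reconfiguringpathsoflength3} the external neighbours of $v_{i-1}$ are $\{v_{i-2}, v_{i-1}'\}$ and those of $v_{i+1}$ are $\{v_{i+2}, v_{i+1}'\}$. Since $v_{i\pm 2}$ have degree $3$ while each $v_j'$ has degree $\geq 4$, the common neighbour of $v_{i-1}$ and $v_{i+1}$ forced by the lemma must be $v_{i-1}' = v_{i+1}'$. Iterating across such triples yields vertices $a, b$ outside $P$ with $v_i' = a$ for every odd interior $i$ and $v_i' = b$ for every even interior $i$. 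At the endpoint triple $(v_0, v_1, v_2)$ the analogous degree argument forces the common neighbour of $v_0$ and $v_2$ to be $v_2' = b$, so after relabeling I set $v_0' = b$, $w := v_0''$, $w' := b$; the second conclusion of the lemma then gives $w v_1' = wa \in E(G)$. The mirror argument applied to $(v_{n-2}, v_{n-1}, v_n)$ produces $t := v_n''$ with $v_n' \in \{a, b\}$ depending on the parity of $n$, together with $t v_{n-1}' \in E(G)$. Taking the alternating bipartition $(A, B)$ of the sequence $w, v_0, v_1, \ldots, v_n, t$ with $v_0 \in A$ now yields exactly that every vertex of $A$ is adjacent to $w'$ and every vertex of $B$ is adjacent to $v_1'$.

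For the final assertion, fix $p \in \{w, t\}$ and $q \in \{w', v_1'\}$. If $w' = v_1'$ (call this vertex $a$), then both $wa \in E(G)$ and $ta \in E(G)$ by the preceding construction, so all pairs are already edges. Otherwise $w' \neq v_1'$, which is exactly the hypothesis of the Further clause of Lemma~\ref{reconfiguringpathsoflength3} at both endpoint triples: applying it to $(v_0, v_1, v_2)$ produces a vertex $x_{w', w}$ adjacent to both $w'$ and $w$, and applying it to $(v_{n-2}, v_{n-1}, v_n)$ produces the symmetric common neighbour for whichever of the pairs involving $t$ is not already an edge. The main obstacle is verifying that these common-neighbour vertices do not lie on $P$: this follows because in the proof of the Further clause each such vertex receives a fixed colour in $\{2, 4\}$ in the underlying $3$-colouring of $G$ minus the triple, which differs from the colours forced on the path vertices $v_j$, so $x_{p,q}$ cannot coincide with any of them.
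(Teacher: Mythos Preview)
Your overall strategy—applying Lemma~\ref{reconfiguringpathsoflength3} to consecutive triples along $P$ and using structural constraints to pin down the common neighbour at each step—is essentially the paper's inductive argument unrolled, and the main construction goes through. One small point: your identification of the common neighbour at interior triples via ``$v_{i\pm 2}$ have degree $3$ while $v_j'$ has degree $\geq 4$'' works but silently imports the extra hypothesis that $P$ is a full Gallai-tree component (so that $v_j'$ is forced out of the Gallai tree). The cleaner observation, and the one the paper actually uses, is that $v_{i\pm 2}\in P$ while each $v_j'\notin P$ by definition, together with acyclicity of the Gallai tree to rule out $v_{i-2}$ being adjacent to $v_{i+1}$.

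There is, however, a genuine gap in your final paragraph. Your argument that $x_{p,q}\notin P$ because it ``receives a fixed colour in $\{2,4\}$ \ldots\ which differs from the colours forced on the path vertices $v_j$'' does not stand: in the proof of Lemma~\ref{reconfiguringpathsoflength3} the $3$-colouring is of $G-\{x,y,z\}$, so the remaining path vertices $v_j$ (for $j$ outside the current triple) \emph{are} coloured, and nothing prevents them from receiving colour $2$ or $4$. No colour is ``forced'' on them. The paper's argument is structural instead: both $p$ and $q$ lie outside $P$, so if $x_{p,q}$ were an interior vertex $v_j$ of $P$ it would have two distinct neighbours outside $P$, contradicting that $v_j$ has degree $3$ with two of its neighbours on $P$. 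The remaining possibility that $x_{p,q}$ is the far endpoint of $P$ is then dispatched by a direct degree count (it would acquire four distinct neighbours). You should replace the colouring argument with this degree argument.
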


\begin{proof}
We proceed by induction on $n$. If $n =3$, the result follows from Lemma \ref{reconfiguringpathsoflength3}. Now assume $n \geq 4$. Consider the path $v_{1},\ldots,v_{n}$. Let $v_{0},v'_{1}$ be the vertices adjacent to $v_{1}$ not in $v_{1},\ldots,v_{n}$, and $v_{n}',v''_{n}$ be the vertices adjacent to $v_{n}$ not in $P$. Let $v_{2}'$ be the vertex not in $P$ adjacent to $v_{2}$. Apply the induction hypothesis to $v_{1},\ldots,v_{n}$. 

Observe that $v_{0}$ has degree $3$, so $v_{0}$ is not adjacent to any vertex of degree $3$ in $P$ aside from $v_{1}$, as then we would have a cycle of degree $3$ vertices. Thus when applying the induction hypothesis, we can conclude that $v_{0} =w$ (where $w$ is defined as in the statement). Let $(A,B)$ be a bipartition of $P \cup \{v_{0},v_{n}'\}$ (up to relabelling $v_{n}'$ with $v_{n}''$ if necessary), such that $v_{0} \in A$. Then by induction, $v_{1}'$ is adjacent to all vertices in $B$, and $v_{2}'$ is adjacent to all vertices in $A$. 

Now let $v_{0}',v_{2}'$ be the vertices adjacent to $v_{0}$ which are not $v_{1}$. Now apply Lemma \ref{reconfiguringpathsoflength3} to $v_{0},v_{1},v_{2}$, where in the context of that lemma statement, $v_{0} =x$, $v_{1}=y$ and $v_{2} =z$. Then $y' = v_{1}'$. As $v_{2}'$ is adjacent to both $v_{2}$ and $v_{0}$, $v_{2}' = x'$ in the lemma statement, and hence $v_{0}' = x''$ and so $v_{1}'v_{0}' \in E(G)$. 

Finally, observe that if $v_{2}'v_{0}' \not \in E(G)$, then Lemma \ref{reconfiguringpathsoflength3} ensures that there is a vertex $x_{v_{2}'v_{0}'}$ not in $v_{0},v_{1},v_{2}$ which is adjacent to both $v_{2}',v_{0}'$. Observe that $x_{v_{2}',v_{0}'}$ is not in $P$, as all vertices in $P$ have degree $3$. Similarly, if $v_{1}'v_{n}' \not \in E(G)$, then by induction we have a vertex $x_{v_{1}',v_{n}'}$ adjacent to vertices $v_{1}',v_{n}'$ and does not belong to the path $v_{1},\ldots,v_{n}$, and $x_{v_{1}',v_{n}'}$ is not $v_{0}$, as $v_{0}$ has degree $3$ ($v_{0}$ would be adjacent to $v_{1}',v_{1},v_{2}',v_{n}'$, and as $v_{1}'v_{n}' \not \in E(G)$, $v_{1}' \neq v_{2}'$). This completes the claim. 
\end{proof}

We can strengthen Lemma \ref{reconfiguringpathsoflength3} when the path of length three is a component of the Gallai Tree and a specific outcome occurs. 

\begin{lemma}
\label{pathoflength3kempe}
Suppose the following graph $H$ is a subgraph of $G$. Let $V(H) = \{x,y,z,x',x'',y', \\ z'',x_{x',x''},x_{x',z''}\}$. Let $E(H) = \{xy,xx',xx'',yz,yy',zx',zz'',x''x_{x',x''}, x''y', x_{x',x''}x', x'x_{x',z''}, \\ x_{x',z''}z'',y'z''\}$. 

Further suppose that all of $x,y$ and $z$ have degree three in $G$. Then at least one of the following occurs:

\begin{itemize}
\item{There exists a $t \in \{y',x',x'',z''\}$ such that $\deg(t) \geq 5$}
\item{There exists a $t \in \{x_{x',x''},x_{x',z''}\}$ such that $\deg(t) \geq 4$}
\end{itemize}
\end{lemma}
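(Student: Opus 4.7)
My plan is to argue by contradiction. Assume every vertex in $\{y', x', x'', z''\}$ has degree at most $4$, and, since every $4$-critical graph has minimum degree $3$, that $\deg(x_{x',x''}) = \deg(x_{x',z''}) = 3$ exactly. Let $w_1$ denote the unique neighbour of $x_{x',x''}$ outside $\{x', x''\}$ and $w_2$ the unique neighbour of $x_{x',z''}$ outside $\{x', z''\}$. By $4$-criticality, take a $3$-colouring $f$ of $G - \{x, y, z\}$. Re-running the derivation in the proof of Lemma~\ref{reconfiguringpathsoflength3}, which is precisely what produced the subgraph $H$, we may assume after permuting colours that $f(x') = f(y') = 0$, $f(x'') = 2$, and $f(z'') = 4$. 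The edges of $H$ then force $f(x_{x',x''}) = 4$, $f(x_{x',z''}) = 2$, $f(w_1) \in \{0, 2\}$, and $f(w_2) \in \{0, 4\}$. A direct check shows $f$ does not extend to a $(7,2)$-colouring of $G$: the only available colour for $z$ is $2$, which forces $y \in \{4, 5\}$, and then no colour remains for $x$.

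The goal is to modify $f$ into a $(7,2)$-colouring of $G - \{x, y, z\}$ that does extend to $\{x, y, z\}$, thereby producing a forbidden $(7,2)$-colouring of $G$. I would run a case analysis on $(f(w_1), f(w_2))$ and on the colours of the (at most one each) fourth neighbours of $x', x'', y', z''$, which the $3$-colouring forces into small sets: in particular, if $u$ is the fourth neighbour of $x''$ it satisfies $f(u) \in \{0, 4\}$, and if $u'$ is the fourth neighbour of $z''$ it satisfies $f(u') \in \{0, 2\}$. In the representative case $f(w_1) = 0$ with either $\deg(x'') = 3$ or $f(u) = 0$, I would swap colours along the edge $x'' x_{x',x''}$: set $f(x'') := 4$ and $f(x_{x',x''}) := 2$. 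One verifies that this remains a valid $(7,2)$-colouring on $G - \{x, y, z\}$, and then it extends via $f(x) := f(z) := 2$ and $f(y) := 5$. The left-right mirror move on $\{z'', x_{x',z''}\}$ dispatches the analogous subcase $f(w_2) = 0$ with the matching condition on the fourth neighbour of $z''$.

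For the remaining subcase $f(w_1) = 2$, $f(w_2) = 4$, where neither swap above is legal, I would instead simultaneously recolour $f(x') := 1$ and $f(x_{x',z''}) := 6$, keeping $f(x_{x',x''}) := 4$. This move is valid precisely when $f(w_2) = 4$ and when either $\deg(x') = 3$ or the fourth neighbour of $x'$ is coloured $4$; it yields the colour pattern $(f(x'), f(x''), f(y'), f(z'')) = (1, 2, 0, 4)$, which then extends via $f(x) := 4$, $f(y) := 2$, $f(z) := 6$. When this move is further obstructed by the fourth-neighbour condition on $x'$, a combination of the two kinds of recolourings above, applied simultaneously to three of $\{x_{x',x''}, x_{x',z''}, x'', x'\}$, handles the residual configurations via a Kempe-style swap on the non-adjacent $G_{7,2}$ colour pair $\{4, 5\}$.

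The main obstacle I anticipate is the bookkeeping: the pair $(f(w_1), f(w_2))$ branches into four cases and within each, the colours of the fourth neighbours of $\{y', x', x'', z''\}$ give roughly a dozen configurations to verify. The degree hypotheses are essential throughout: because $\deg(x_{x',x''}) = \deg(x_{x',z''}) = 3$, each recolouring of these two vertices imposes only a single auxiliary constraint from $w_1$ or $w_2$; and because each of $y', x', x'', z''$ has degree at most $4$, each recolouring of these vertices introduces at most one extra constraint from outside $H$. This bounded-complexity structure guarantees that in every subcase at least one of the reconfigurations above succeeds in producing the required $(7,2)$-colouring of $G$, contradicting the standing hypothesis.
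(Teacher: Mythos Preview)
Your setup is correct and the reconfiguration strategy is the right idea, but the proposal is incomplete and misses the structural observation that makes the argument clean.

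First, a small confusion: under the contradiction hypothesis $\deg(x')\le 4$, the vertex $x'$ already has the four neighbours $x,z,x_{x',x''},x_{x',z''}$ in $H$, so $\deg(x')=4$ exactly and there is no extra neighbour of $x'$ outside $H$. Your hedging about ``either $\deg(x')=3$ or the fourth neighbour of $x'$ is coloured $4$'' is vacuous.

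More seriously, your three explicit moves do not cover all cases. Take $f(w_1)=2$, $f(w_2)=0$, $\deg(z'')=4$, with the fourth neighbour $u'$ of $z''$ satisfying $f(u')=2$. The swap on $x''\,x_{x',x''}$ needs $f(w_1)=0$; the mirror swap on $z''\,x_{x',z''}$ needs $f(u')=0$ (or $\deg(z'')=3$); and your move $x'\mapsto 1$, $x_{x',z''}\mapsto 6$ requires $f(w_2)=4$. None applies. The fallback ``Kempe-style swap on the pair $\{4,5\}$'' has no content here, since $f$ uses only the colours $\{0,2,4\}$ and nothing is coloured $5$.

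The paper bypasses the case analysis entirely by observing that the six vertices
\[
x',\; x_{x',z''},\; z'',\; y',\; x'',\; x_{x',x''}
\]
form a $6$-cycle in $G$ on which every three consecutive vertices carry all three colours. For each edge $ab$ of this cycle, if one can swap the colours of $a$ and $b$ while keeping a proper $3$-colouring of $G-\{x,y,z\}$, then after the swap one of the coincidences $f(x'')=f(x')$, $f(z'')=f(x')$, or $f(x'')=f(z'')$ holds, and the colouring extends to a $3$-colouring of $G$. Hence for every edge $ab$ of the $6$-cycle, either $a$ has a neighbour other than $b$ coloured $f(b)$, or $b$ has a neighbour other than $a$ coloured $f(a)$. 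Starting at $x'$ (where $\deg(x')=4$ forces the constraint onto the adjacent $x_{x',z''}$ and $x_{x',x''}$) and propagating once around the cycle, these constraints cascade until one of the degree bounds is violated. This single uniform argument replaces all of your branching on $(f(w_1),f(w_2))$ and on the fourth neighbours of $x'',z'',y'$; I would reorganise the proof around the $6$-cycle.
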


\begin{proof}

Suppose none of the conditions hold. 

Let $f$ be a $3$-colouring of $G- \{x,y,z\}$. Without loss of generality suppose that $f(x') = 0$. First suppose that $f(x'') =0$. Then colour $z$ with any available colour, $y$ with any available colour, and since $f(x') = 0$ and $f(x'') =0$, there is an available colour for $x$, a contradiction. A similar argument holds for $f(z'')$. 

If $f(x'') = f(z'')$, then colour $x$ and $z$ the same colour, and there is an available colour for $y$. Thus we can assume without loss of generality that $f(x'') = 2$ and $f(z'') =4$. Then $f(y') = 0$, $f(x_{x',x''}) = 2$ and $f(x_{x',z''}) =4$. 

Thus observe that $x',x_{x',z''},z'',z',x'',x_{x',x''}$ are a six cycle where for any three consecutive vertices, all three colours appear. Let $a,b \in \{x',x_{x',z''},z'',z',x'',x_{x',x''}\}$ such that $ab$ is an edge. We claim that either $a$ is adjacent to a vertex not $b$ with the same colour as $b$, or $b$ is adjacent to a vertex not $a$ with the same colour as $a$.  To see this, if not simply exchange the colours of $a$ and $b$, and by the previous case analysis (swapping colours if necessary), we can extend the colouring. 

Observe if $x'$ is adjacent to a vertex coloured $2$ or $4$ that is not $x_{x',z''}$ or $x_{x',x''}$ then $\deg(x') \geq 5$ and we are done. 

This implies that $x_{x',z''}$ is adjacent to a vertex coloured $0$ that is not $x'$, and thus either $\deg(x',z'') \geq 4$, or $z''$ is adjacent to a vertex coloured $2$ that is not $x_{x',z''}$. Similarly, this implies that either $\deg(z'') \geq 5$, or $z'$ is adjacent to a vertex coloured $4$ which is not $z''$. Continuing, this implies that either $\deg(z') \geq 5$ or $x''$ is adjacent to a vertex coloured $0$ that is not $z'$. Finally, this implies that either $\deg(x'') \geq 5$, or $\deg(x_{x',x''}) \geq 4$, a contradiction in either case, and so we conclude the claim. 
\end{proof}

We note that you can strengthen the above even further, but to the best of my knowledge you still cannot get significant improvements to the bound without some further arguments which are not clear to me.

\begin{lemma}
\label{pathoflength4kempe}
Suppose the following graph $H$ is a subgraph of $G$. Let $V(H) = \{x_{1},x_{2},x_{3},x_{4}, \\ x_{1}',x_{4}',u,v,x_{x_{1}',u},x_{v,x_{4}'}\}$, and $E(H) = \{x_{1}x_{1}',x_{1}u,x_{1}x_{2},x_{2}x_{3},x_{2}v,x_{3}u,x_{3}x_{4},x_{4}x_{4}', x_{4}v, x_{4}'u, \\ x_{4}'x_{x_{4}',v},x_{1}'v,x'_{1}x_{x_{1}',u}, vx_{x_{4}',v},x_{x_{1}',u}u\}$.

Further suppose that $x_{1},x_{2},x_{3},x_{4}$ all have degree $3$ in $G$. Then at least one of the following occurs:

\begin{itemize}
\item{There is a $t \in \{u,v\}$ such that $\deg(t) \geq 5$}
\item{There is a $t \in \{x_{1}',x_{4}'\}$ such that $\deg(t) \geq 5$ and there is a $w \in \{x_{x_{1}',u},x_{x_{4}',v}\}$ such that $\deg(w) \geq 4$}
\item{Both $\deg(x_{1}') \geq 5$ and $\deg(x_{4}') \geq 5$}
\item{Both $\deg(x_{x_{1}',u)} \geq 4$ and $\deg(x_{x_{4}',v)} \geq 4$}
\end{itemize}
\end{lemma}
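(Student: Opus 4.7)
The plan is to follow the strategy of the proof of Lemma \ref{pathoflength3kempe}, applied now to the six-cycle
\[ C_6 := x_1',\, v,\, x_{x_4',v},\, x_4',\, u,\, x_{x_1',u}, \]
which the listed edges place inside $G - \{x_1, x_2, x_3, x_4\}$. Supposing toward a contradiction that none of (a)--(d) holds, I would take an arbitrary $3$-colouring $f$ of $G - \{x_1, x_2, x_3, x_4\}$ and analyze when $f$ extends to the path $x_1 x_2 x_3 x_4$.

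First I would carry out a short case analysis on the quadruple $(f(x_1'), f(u), f(v), f(x_4'))$. Extending $f$ to the path is equivalent to properly $3$-list-colouring the path with $x_1$ forbidden from $\{f(x_1'), f(u)\}$, $x_2$ from $\{f(v)\}$, $x_3$ from $\{f(u)\}$, and $x_4$ from $\{f(v), f(x_4')\}$. Direct inspection shows this list problem is solvable unless, up to permuting colours, $f(x_1') = f(x_4') = 0$ and $f(u) = f(v) = 2$. The adjacencies in $C_6$ and the edges $x_1' v$, $x_4' u$ are all consistent with this pattern, and force $f(x_{x_1',u}) = f(x_{x_4',v}) = 4$; hence $C_6$ receives the rainbow colouring $0, 2, 4, 0, 2, 4$.

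Next, for each edge $ab$ of $C_6$, I would argue in the style of the proof of Lemma \ref{pathoflength3kempe} that either $a$ has a neighbour of colour $f(b)$ distinct from $b$, or $b$ has a neighbour of colour $f(a)$ distinct from $a$. Otherwise swapping just the colours of $a$ and $b$ (while keeping $f$ fixed elsewhere) produces a proper $3$-colouring of $G - \{x_1, x_2, x_3, x_4\}$ whose new quadruple falls outside the unique bad pattern and so extends to the path, contradicting that $G$ is not $3$-colourable. Each of the six edges therefore contributes one ``extra'' neighbour (outside $H$) at one of its two endpoints. Since the degrees in $H$ of $u, v$ are $4$, of $x_1', x_4'$ are $3$, and of $x_{x_1',u}, x_{x_4',v}$ are $2$, even one extra at $u$ or $v$ pushes its degree in $G$ to $5$, two extras at $x_1'$ or $x_4'$ push its degree to $5$, and two extras at a bridge vertex push its degree to $4$.

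Finally I would distribute the six extras among the vertices of $C_6$. If any extra lands at $u$ or $v$, then conclusion (a) holds. Otherwise the edge $u x_4'$ sends its extra to $x_4'$, the edge $u x_{x_1',u}$ sends its extra to $x_{x_1',u}$, the edge $v x_1'$ sends its extra to $x_1'$, and the edge $v x_{x_4',v}$ sends its extra to $x_{x_4',v}$; the remaining two edges $x_1' x_{x_1',u}$ and $x_4' x_{x_4',v}$ each assign their extra to one of their two endpoints, and a straightforward four-case check on these two binary choices shows that the resulting degree pattern always realizes at least one of (b), (c), or (d). The main obstacle is the per-edge ``swap extends'' verification: each swap perturbs the quadruple $(f(x_1'), f(u), f(v), f(x_4'))$ in a different way, and one must reconfirm extendability for all six resulting quadruples, though each such check reduces to the same short list-colouring argument used in the first step.
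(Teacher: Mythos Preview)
Your proposal is correct and follows essentially the same approach as the paper: reduce the forced colour pattern on $(x_1',u,v,x_4')$ to the single rainbow configuration on the six-cycle, then run the per-edge Kempe swap to extract one ``extra'' neighbour from each edge, and finally distribute these extras to force one of (a)--(d). The paper's write-up is terser (it does the colour analysis step by step rather than as a list-colouring problem, and it compresses the final distribution into two sentences), but your more explicit four-case split on the two free edges is exactly what the paper's last paragraph is abbreviating.
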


\begin{proof}
We may assume none of the conditions holds. Let $f$ be a $3$-colouring of $G - \{x_{1},x_{2},x_{3},x_{4}\}$. Without loss of generality we may assume that $f(v) = 0$. We claim that $f(u) = 0$. Suppose not, and without loss of generality suppose that $f(u) =2$. If $f(x_{1}') = f(x_{4}')$, then $f(x_{1}') = 4$, and colour $x_{1}$ and $x_{3}$ with $0$, and $x_{2}$ and $x_{4}$ with $2$. If $f(x_{1}') = 2$ and $f(x_{4}') = 0$, then again colour $x_{1}$ and $x_{3}$ zero, and $x_{2}$ and $x_{4}$ two. If $f(x_{1}') =4$, then colour $x_{1}$ and $x_{3}$ zero, $x_{2}$ with $2$, and $x_{4}$ with any available colour. All other cases follow similarly.

Hence we can assume that $f(u) = 0$. This implies that $f(x'_{1}) \neq 0$ and $f(x'_{4}) \neq 0$, as they are adjacent to at least one of $u$ or $v$. Now we claim that $f(x'_{1}) = f(x_{4}') \in \{2,4\}$. Suppose $f(x'_{1}) \neq f(x'_{4})$. Then without loss of generality $f(x'_{1}) = 2$ and $f(x'_{4}) = 4$. Then colour $x_{1}$ and $x_{3}$ with $4$, and $x_{2}$ and $x_{4}$ with $2$.  Thus it follows that $f(x'_{1}) = f(x_{4}') \in \{2,4\}$. Without loss of generality we will assume that $f(x'_{1}) = 4$. Hence $f(x_{x'_{1},u}) = f(x_{v,x_{4}'}) = 2$.

 Now the proof is analogous to Lemma \ref{pathoflength3kempe}. We observe that for any adjacent pair of vertices $w,t \in \{x_{1}',x_{4}',v,u,x_{x_{1}',u},x_{v,x_{4}'}\}$, either $w$ is adjacent to a vertex coloured $f(t)$ that is not $t$, or $t$ is adjacent to a vertex coloured $f(w)$ which is not $w$. If not, we simply exchange the colours of $w$ and $t$, and remain a $3$-colouring, and extend this colouring to a $(7,2)$-colouring of $G$ (here this extension is possible by the same case analysis done above). 
 
 As we assumed that none of the conditions hold, this implies that all of $x_{x_{4}',u}$, $x_{4}'$, $x_{x_{1}',v}$, and $x_{1}'$ are adjacent to a vertex coloured $0$ not in $H$ as otherwise either $\deg(u) \geq 5$ or $\deg(v) \geq 5$. But now the exchanges possible on $x_{x_{4}',u}$ and $x_{4}'$ imply that either $\deg(x_{4}') \geq 5$ or $\deg(x_{x_{4}',u}) \geq 4$ and similarly for the pair $x_{1}'$ or $x_{x_{1}',v}$, and thus one of the conditions holds. 
\end{proof}

\begin{lemma}
\label{pathoflength5kempe}
Suppose that $G$ contains the following graph $H$ as a subgraph. Let $V(H) = \{x_{1},x_{2},x_{3},x_{4},x_{5},x_{1}',x_{5}',u,v,x_{x_{1}',v},x_{v,x_{5}'}\}$ and $E(H) = \{x_{1}x_{2},x_{1}x_{1}',x_{1}v,x_{2}u,x_{2}x_{3},  x_{3}v,x_{3}x_{4}, \\ x_{4}u,x_{4}x_{5},x_{5}x_{5}',x_{5}v, x_{5}'u,x_{5}'x_{v,x_{5}'},vx_{x_{5}',v},vx_{x_{1}',v}
,x_{1}'x_{x_{1}',v},x_{1}'u\}$.
Then at least one of the following occurs:
\begin{itemize}
\item{There exists a $t \in \{u,v,x_{1}',x_{5}'\}$ such that $\deg(t) \geq 5$}
\item{There exists a $t \in \{x_{x_{1}',v},x_{v,x_{5}'}\}$ such that $\deg(t) \geq 4$}
\item{$\deg(v) \geq 6$}
\end{itemize}
\end{lemma}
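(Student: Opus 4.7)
The proof will follow the Kempe-chain template of Lemmas~\ref{pathoflength3kempe} and~\ref{pathoflength4kempe}; the extra length of the path only complicates the bookkeeping.

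Assume toward contradiction that none of the conclusions hold. Then $\deg(u), \deg(x_1'), \deg(x_5') \leq 4$, both $\deg(x_{x_1', v}) = \deg(x_{v, x_5'}) = 3$, and $\deg(v) \leq 5$. Since $v$ already has the five neighbours $x_1, x_3, x_5, x_{x_1', v}, x_{v, x_5'}$ inside $H$, this forces $\deg(v) = 5$, so $v$ has no neighbour outside $H$.

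Fix a $3$-colouring $f$ of $G - \{x_1, x_2, x_3, x_4, x_5\}$ and, after permuting colour classes, set $f(v) = 0$. I would first show $f(u) = 0$: a direct enumeration of the possible values of $(f(x_1'), f(x_5'))$ shows that whenever $f(u) \in \{2,4\}$ the path $x_1 x_2 x_3 x_4 x_5$ can be $3$-coloured compatibly with $f$, producing a $3$-colouring of $G$ and contradicting $\chi(G) = 4$. Since $x_1'$ and $x_5'$ are both adjacent to $u$, this forces $f(x_1'), f(x_5') \in \{2,4\}$, and the colours of $x_{x_1', v}$ and $x_{v, x_5'}$ are then forced (each is adjacent to $v$ coloured $0$ and to $x_1'$ or $x_5'$ respectively) to the value of $\{2,4\}$ opposite to $f(x_1')$ or $f(x_5')$.

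A short list-colouring check on the path shows that if $f(x_1') = f(x_5')$ then the assignment $x_1 = x_3 = x_5 = 2$, $x_2 = x_4 = 4$ (or its mirror) is a valid $(7,2)$-extension, giving a $(7,2)$-colouring of $G$, a contradiction. So $f(x_1') \neq f(x_5')$, and by symmetry I take $f(x_1') = 2$, $f(x_5') = 4$, $f(x_{x_1', v}) = 4$, $f(x_{v, x_5'}) = 2$. To finish, I perform Kempe exchanges along the edges of the $6$-cycle $C := x_1' - x_{x_1', v} - v - x_{v, x_5'} - x_5' - u - x_1'$, which lies in $H$ and is properly $3$-coloured with the cyclic pattern $2,4,0,2,4,0$. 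For each edge $ab$ of $C$, the $\{f(a), f(b)\}$-Kempe chain swap produces a new $3$-colouring of $G - \{x_1, \ldots, x_5\}$, and by the analysis above this new colouring extends to a $(7,2)$-colouring of $G$ unless the swap simultaneously interchanges the colours of another pair on $C$ so that the resulting configuration again satisfies $f(x_1') \neq f(x_5')$ (up to the renaming of colours). Each such forced extension of the Kempe chain charges an extra neighbour to one of the vertices of $C$ via a vertex outside $H$. Using the degree bounds $\deg(u), \deg(x_1'), \deg(x_5') \leq 4$ and $\deg(x_{x_1', v}) = \deg(x_{v, x_5'}) = 3$, and tracking where each chain can escape, the six swaps can all simultaneously be blocked only if $v$ acquires a sixth neighbour, contradicting $\deg(v) = 5$.

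The main obstacle is the sheer bookkeeping: each of the six Kempe swaps branches into subcases depending on which off-cycle neighbour of the endpoints supports the chain and whether the chain returns through another vertex of $C$, and one must check that no combination of the allowed chain extensions blocks every swap without forcing an extra edge at $v$. The argument is structurally identical to the length-$3$ and length-$4$ versions; the added path vertex only enlarges the interior list-colouring check at the start and the number of swap subcases at the end.
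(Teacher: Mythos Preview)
Your outline is essentially the paper's proof: delete the path, pin down a canonical $3$-colouring on the remaining vertices (forcing $f(u)=f(v)=0$ and $f(x_1')\neq f(x_5')$), and then run an exchange argument around the $6$-cycle $u,\,x_5',\,x_{v,x_5'},\,v,\,x_{x_1',v},\,x_1'$ to force extra degree.

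The one real difference is the exchange step. You invoke full $\{f(a),f(b)\}$-Kempe-chain swaps and then track where each chain can escape the cycle. The paper does something strictly simpler: for each edge $ab$ of the $6$-cycle it performs a \emph{two-vertex} swap (just trade the colours on $a$ and $b$). Because the $6$-cycle is coloured $0,2,4,0,2,4$, any such swap preserves properness on the cycle itself, so it fails to be a $3$-colouring of $G-\{x_1,\dots,x_5\}$ only if $a$ has an off-cycle neighbour of colour $f(b)$ or $b$ has one of colour $f(a)$. This immediately gives, for each edge $ab$, one extra neighbour charged to $a$ or to $b$. Now start at $u$ (which, under your degree hypotheses, already has all four of its neighbours in $H$) and walk around: the edge $ux_5'$ forces a spare neighbour at $x_5'$; then $x_5'x_{v,x_5'}$ forces one at $x_{v,x_5'}$; then $x_{v,x_5'}v$ forces a spare neighbour at $v$, contradicting $\deg(v)=5$. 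That is the whole argument, mirroring Lemma~\ref{pathoflength3kempe}, and it avoids all the branching you describe (``whether the chain returns through another vertex of $C$'', etc.).

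So your approach is correct, but you are carrying heavier machinery than needed; replacing the Kempe chains with single two-vertex swaps removes essentially all of the bookkeeping you flag as the main obstacle.
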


\begin{proof}
Let $f$ be a $3$-colouring of $G- \{x_{1},x_{2},x_{3},x_{4},x_{5}\}$. Without loss of generality we assume that $f(u) = 0$. We claim that $f(v) = 0$. Suppose  $f(v) = 2$. Then colour $x_{1},x_{3}$ and $x_{5}$ zero, and $x_{2}$ and $x_{4}$ two. A similar argument applies when $f(v) = 4$. Hence $f(v) =0$.

Now we claim that $f(x_{1}') \neq f(x_{4}')$. Suppose $f(x_{1}') = f(x_{2}')$. Then without loss of generality $f(x_{1}') = 2$. Then colour $x_{1},x_{3},x_{5}$ with $4$, and $x_{2}$ and $x_{4}$ with $2$. Thus without loss of generality we can assume that $f(x_{1}') = 4$ and $f(x_{4}') = 2$. Hence $f(x_{x_{1}',v}) = 2$ and $f(x_{v,x_{5}'}) = 4$. 

Then observe that for any pair of adjacent vertices $w,t \in \{u,x_{5}',x_{v,x_{5}'},v,x_{x_{1}',v},x_{1}'\}$, either $w$ is adjacent to a vertex coloured $f(t)$ that is not $t$, or $t$ is adjacent to a vertex coloured $f(w)$ that is not $w$. If not, we exchange the two colours and extend the colouring using the same ideas as in the above case analysis. Following a similar argument as in Lemma \ref{pathoflength3kempe}, we now see that at least one of the desired outcomes must follow. 
\end{proof}

\section{A basic counting argument to finish}
\label{finishedhard}

In this section we prove Theorem \ref{maintheorem}. We assume that all components of the Gallai Tree are isomorphic to paths. Let $P$ be a path component in the Gallai Tree. Recall that $\deg_{3}(v)$ denotes the number of neighbours of $v$ which have degree $3$. Assign to each vertex $v$ a charge of $\deg(v)$. Consider the discharging rule where each vertex $v$ with $\deg(v) \geq 4$ sends $\frac{\deg(v) - 3.4}{\deg_{3}(v)}$ charge to each of its neighbours of degree $3$. Let $\text{ch}(v)$ denote the charge of each vertex after performing the discharging rule. 

\begin{obs}
If $\deg(v) \geq 4$, then $\text{ch}(v) = 3.4$.
\end{obs}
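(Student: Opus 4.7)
The plan is to verify the claim by unfolding the definition of the discharging rule. The initial charge assigned to $v$ is $\deg(v)$. Because only vertices of degree at least $4$ send charge and they send it only to their degree-$3$ neighbours, a vertex $v$ with $\deg(v) \geq 4$ only gives charge away and never receives any. By the rule, $v$ sends exactly $\frac{\deg(v)-3.4}{\deg_{3}(v)}$ to each of its $\deg_{3}(v)$ neighbours of degree $3$. Summing over those neighbours, the total charge leaving $v$ is
\[
\deg_{3}(v)\cdot\frac{\deg(v)-3.4}{\deg_{3}(v)} \;=\; \deg(v)-3.4,
\]
so $\text{ch}(v) = \deg(v) - (\deg(v)-3.4) = 3.4$, exactly as claimed.

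The only subtlety I would flag is the corner case $\deg_{3}(v)=0$, where the fraction in the rule is not defined. The natural interpretation is that $v$ simply sends no charge in this case, so $\text{ch}(v)=\deg(v)\geq 4>3.4$. This still agrees with (and in fact strengthens) the stated conclusion, so it does not pose an obstacle to the global discharging argument. Beyond this trivial bookkeeping point, the observation is a direct consequence of how the rule was engineered, and I expect no further difficulty.
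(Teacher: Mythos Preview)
Your argument is exactly the paper's: compute $\text{ch}(v)=\deg(v)-\deg_{3}(v)\cdot\frac{\deg(v)-3.4}{\deg_{3}(v)}=3.4$. Your remark about the $\deg_{3}(v)=0$ case is a fair bookkeeping point the paper silently ignores; as you note, in that case $\text{ch}(v)=\deg(v)\geq 4$, which only helps the global inequality $\sum_{v}\text{ch}(v)\geq 3.4\,v(G)$.
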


\begin{proof}
We have that $\text{ch}(v) = \deg(v) - \deg_{3}(v)(\frac{\deg(v)-3.4}{\deg_{3}(v)}) = 3.4$.
\end{proof}

Given a component $P$ of the Gallai Tree, we let $\text{ch}(P) = \sum_{v \in V(P)} \text{ch}(v)$. Observe that if for every component of the Gallai Tree $P$, we have $\text{ch}(P) \geq 3.4v(P)$, then Theorem \ref{maintheorem} follows.
To see this we have $2e(G) = \sum_{v \in V(G)}\deg(v) = \sum_{v \in V(G)} \text{ch}(v) \geq 3.4v(G)$, and hence $e(G) \geq \frac{17v(G)}{10}$. We will say a component $P$ of the Gallai Tree is \textit{safe} if $\text{ch}(P) \geq 3.4v(P)$. Thus we devote the rest of the section to showing that all components of the Gallai Tree are safe. 

\begin{prop}
Let $P$ be an isolated vertex in the Gallai Tree. Then $P$ is safe. 
\end{prop}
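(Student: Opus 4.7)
The plan is to unpack the definitions: an isolated vertex $P$ in the Gallai Tree consists of a single degree-3 vertex $v$ whose three neighbours all have degree at least $4$, and we must show $\text{ch}(P) = \text{ch}(v) \geq 3.4$. Since $v$ starts with charge $3$, it suffices to show that $v$ receives at least $0.4$ from its neighbours under the discharging rule.

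Concretely, each neighbour $u$ of $v$ has $\deg(u) \geq 4$, so $v$ is one of the at most $\deg_3(u)$ degree-3 neighbours of $u$, and therefore $u$ sends charge $\frac{\deg(u)-3.4}{\deg_3(u)}$ to $v$. I will show that this quantity is minimized when $\deg(u) = 4$ and $\deg_3(u) = 4$: for any fixed $\deg(u) \geq 4$, the expression decreases as $\deg_3(u)$ grows (since the numerator is positive), so the minimum in $\deg_3(u)$ is attained at $\deg_3(u)=\deg(u)$; among the resulting values $\frac{\deg(u)-3.4}{\deg(u)} = 1 - \frac{3.4}{\deg(u)}$, the minimum over $\deg(u) \geq 4$ is attained at $\deg(u) = 4$, giving $\frac{0.6}{4} = 0.15$.

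Summing the contributions from the three neighbours of $v$, we get
\[
\text{ch}(v) \;\geq\; 3 + 3 \cdot 0.15 \;=\; 3.45 \;>\; 3.4,
\]
so $P$ is safe. There is no real obstacle here; the point of the proposition is simply to verify the easy base case of the discharging analysis before tackling non-trivial path components, where the minimum contribution $0.15$ per endpoint of a long path will not by itself suffice and the structural lemmas of Section~\ref{remainingarguments} will have to be invoked.
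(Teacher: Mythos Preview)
Your proof is correct and follows the same approach as the paper: observe that each of the three neighbours has degree at least $4$ and therefore sends at least $\frac{4-3.4}{4}=0.15$ to $v$, giving $\text{ch}(v)\geq 3.45$. Your version is simply more explicit about why $0.15$ is the minimum possible contribution.
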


\begin{proof}
Let $v$ be the isolated vertex in $P$. Then all neighbours of $v$ have degree at least $4$. Hence $\text{ch}(v) \geq 3 + 3(\frac{4-3.4}{4}) \geq 3.45$, and thus $v$ is safe. 
\end{proof}

\begin{prop}
Let $P$ be isomorphic to an edge in the Gallai Tree. Then $P$ is safe.
\end{prop}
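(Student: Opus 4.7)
The plan is to show that $u,v\in V(P)$ together gather at least $0.8$ units of charge from their non-$P$ neighbors, so that $\text{ch}(u)+\text{ch}(v)\geq 3+3+0.8 = 6.8 = 3.4\cdot v(P)$. Write $N(u)\setminus\{v\}=\{a_1,a_2\}$ and $N(v)\setminus\{u\}=\{b_1,b_2\}$; since $P$ is the entire edge component, each of $a_1,a_2,b_1,b_2$ has degree at least $4$ in $G$. By Proposition~\ref{closedneighbourhoods} the sets $\{a_1,a_2\}$ and $\{b_1,b_2\}$ cannot coincide, so they overlap in either $0$ or $1$ vertices, and I split into those two cases.

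\emph{No shared neighbor.} I apply Lemma~\ref{doubleneighbourhoodreconfig} at the central vertex $u$ to the pair $(v,a_i)$: since $a_i\notin\{b_1,b_2\}$ we have $a_i\notin N(v)$, so $va_i\notin E(G)$, and the lemma produces a vertex of $N(v)\setminus\{u\}=\{b_1,b_2\}$ adjacent to $a_i$. Hence each $a_i$ has a neighbor of degree $\geq 4$ besides $u$, giving $\deg_{3}(a_i)\leq \deg(a_i)-1$; symmetrically $\deg_{3}(b_j)\leq \deg(b_j)-1$. For $d\geq 4$ the map $d\mapsto \tfrac{d-3.4}{d-1}$ is monotonically increasing with value $0.2$ at $d=4$, so each of the four neighbors sends at least $0.2$ charge to $\{u,v\}$, for a total of at least $0.8$.

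\emph{One shared neighbor.} Write the common neighbor as $s=a_1=b_1$. Lemma~\ref{keywheelcutlemma} applied to the adjacent degree-$3$ pair $(u,v)$ with common neighbor $s$ forces $s$ to be adjacent to both $a_2$ and $b_2$; note $a_2\neq b_2$, else $\{a_1,a_2\}=\{b_1,b_2\}$ would contradict Proposition~\ref{closedneighbourhoods}. Thus $s$ has at least the four distinct neighbors $u,v,a_2,b_2$, and since $a_2,b_2$ both have degree $\geq 4$ we get $\deg_{3}(s)\leq \deg(s)-2$. The total charge $s$ distributes to $\{u,v\}$ is $\tfrac{2(\deg(s)-3.4)}{\deg_{3}(s)}\geq \tfrac{2(\deg(s)-3.4)}{\deg(s)-2}\geq 0.6$ (monotone in $\deg(s)\geq 4$, equal to $0.6$ at $\deg(s)=4$). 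Furthermore $a_2$ has the degree-$\geq 4$ neighbor $s$, giving $\deg_{3}(a_2)\leq \deg(a_2)-1$ and a contribution of at least $0.2$ to $u$; symmetrically $b_2$ contributes at least $0.2$ to $v$, for a total of at least $1.0$.

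In either case $\text{ch}(P)\geq 6.8$, so $P$ is safe. The only non-routine step is invoking Lemma~\ref{doubleneighbourhoodreconfig} (or Lemma~\ref{keywheelcutlemma}) to rule out the naive worst case in which all four of $a_1,a_2,b_1,b_2$ are degree-$4$ vertices with four degree-$3$ neighbors each; without that structural input the discharging rule alone guarantees only $0.6$, short of the required $0.8$.
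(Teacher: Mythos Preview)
Your proof is correct and follows essentially the same approach as the paper's: the same case split (shared versus non-shared neighbour of the two degree-$3$ vertices), the same structural lemmas (Lemma~\ref{doubleneighbourhoodreconfig} for the disjoint-neighbourhood case, Lemma~\ref{keywheelcutlemma} plus Proposition~\ref{closedneighbourhoods} for the shared-neighbour case), and the same charge estimates. Your presentation is slightly more streamlined in making the monotonicity of $d\mapsto (d-3.4)/(d-1)$ and $d\mapsto (d-3.4)/(d-2)$ explicit, but the argument is the same.
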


\begin{proof}
Let $v$ be a vertex in $V(P)$.  Let $x,y,z$ be the neighbours of $v$, and without loss of generality let $z$ be the neighbour of $v$ with degree $3$. First suppose that $x,y,z$ are an independent set. Then by Lemma \ref{doubleneighbourhoodreconfig} there are distinct vertices which are not in $\{x,y,z,v\}$, $x_{y,z}$, $x_{x,z}$, $x_{x,y}$ which are adjacent to $y$ and $z$,  $x$ and $z$ and $x$ and $y$ respectively. Then as $P$ is isomorphic to an edge, $\deg(x_{x,z}) \geq 4$ and $\deg(x_{x,y}) \geq 4$. Hence for all $t \in \{x,y,x_{x,z},x_{y,z}\}$, $\deg_{3}(t) \leq \deg(t)-1$. Therefore $\text{ch}(v) \geq 3 + 2(.2) = 3.4$ and $\text{ch}(z) \geq 3 + 2 (.2) =3.4$. Therefore it follows in this case that $P$ is safe. Observe that if $x,y,z$ is not an independent set but only $xy \in E(G)$, then the above argument still shows that $P$ is safe, as we never considered $x_{x,y}$ (in fact, the edge $xy$ improves the situation). 

Therefore we can assume that $z$ is adjacent to at least one of $x$ or $y$. Note that $z$ cannot be adjacent to both $x$ and $y$, as otherwise we contradict Proposition \ref{closedneighbourhoods}.

So without loss of generality suppose that $yz \in E(G)$. Then $z$ and $v$ share a common neighbour, $y$, and thus by Lemma \ref{keywheelcutlemma} $yx \in E(G)$. Additionally, the neighbour of $z$ which is not $y$ or $v$, say $z'$ is also adjacent to $y$. Further $\deg(z') \geq 4$. Therefore for all $t \in \{x,y,z'\}$, we have $\deg_{3}(t) \leq \deg(t) -1$, and $\deg_{3}(y) \leq \deg(y) -2$. Hence  $\text{ch}(v) \geq 3 + .3 +.2 = 3.5$ and $\text{ch}(z) \geq 3 + .3 + .2 = 3.5$. Therefore in this case $P$ is safe and thus the proposition follows. 
\end{proof}

\begin{prop}
Let $P$ be isomorphic to a path of length $2$ in the Gallai Tree. Then $P$ is safe. 
\end{prop}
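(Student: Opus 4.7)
The plan is to let $P = u_1 u_2 u_3$ be a path component of the Gallai Tree with each $u_i$ of degree three, and to show that the charge received by $V(P)$ from its degree-$\ge 4$ neighbours totals at least $1.2$, so that $\text{ch}(P) \ge 9 + 1.2 = 3\cdot 3.4$. First I would apply Lemma~\ref{reconfiguringpathsoflength3} to $(u_1,u_2,u_3)$ to name the common neighbour $w$ of $u_1$ and $u_3$, the third neighbour $z$ of $u_2$, and the remaining neighbours $x$ of $u_1$ and $s$ of $u_3$; the lemma also guarantees $z\sim x$, $z\sim s$, and $x\ne s$. Since $P$ is a full component of the Gallai Tree, any degree-$3$ neighbour of some $u_i$ would itself lie in $P$, so $\deg(w),\deg(x),\deg(z),\deg(s)\ge 4$.

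I would then split into two cases. First, if $w=z$, then $w$ is adjacent to $u_1,u_2,u_3,x,s$, giving $\deg(w)\ge 5$ and, because $x,s$ both have degree $\ge 4$, $\deg_3(w)\le \deg(w)-2$; then $w$ alone sends $3\cdot\frac{\deg(w)-3.4}{\deg_3(w)}\ge 3\cdot\frac{1.6}{3}=1.6>1.2$ into $V(P)$, finishing this case.

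Second, if $w\ne z$, then by Lemma~\ref{reconfiguringpathsoflength3} the extra vertices $x_{w,x}$ and $x_{w,s}$ exist and (modulo a short check of the possible coincidences among $\{z,s,x_{w,x},x_{w,s}\}$, each of which only creates extra edges among degree-$\ge 4$ neighbours and strengthens the estimate) the graph $H$ of Lemma~\ref{pathoflength3kempe} embeds into $G$. Before invoking the lemma I would establish a baseline: $w$ sends at least $2\cdot\frac{0.6}{4}=0.3$ to $\{u_1,u_3\}$; $x$ sends at least $\frac{0.6}{3}=0.2$ to $u_1$ (since $z\in N(x)$ has degree $\ge 4$); symmetrically $s$ sends at least $0.2$ to $u_3$; and $z$ sends at least $\frac{0.6}{2}=0.3$ to $u_2$ (since both $x,s\in N(z)$ have degree $\ge 4$), giving a baseline of $1.0$. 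Lemma~\ref{pathoflength3kempe} then produces one of two structural improvements, each supplying the missing $0.2$: the degree-$\ge 5$ subcases are direct substitutions (for instance $\deg(x)\ge 5$ forces $\deg_3(x)\le 4$, so $x$ contributes $\ge\frac{1.6}{4}=0.4$, a gain of $0.2$), and in the subcase $\deg(x_{w,x})\ge 4$ the vertex $w$ acquires an extra degree-$\ge 4$ neighbour (lowering $\deg_3(w)$ to at most $3$ and raising its contribution to $0.4$) while simultaneously $x$ acquires a second degree-$\ge 4$ neighbour, namely $x_{w,x}$ itself (lowering $\deg_3(x)$ to at most $2$ and raising its contribution to $0.3$), summing to $0.4+0.3+0.3+0.2=1.2$; the subcase $\deg(x_{w,s})\ge 4$ is symmetric via $w$ and $s$.

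The main obstacle is that this final subcase is tight with no slack: the argument needs both consequences of $\deg(x_{w,x})\ge 4$ (acting on $w$ and on $x$) simultaneously and must already use both degree-$\ge 4$ neighbours $x,s$ of $z$ in the baseline. Checking the small number of degenerate vertex overlaps in Case~B is the only remaining bookkeeping.
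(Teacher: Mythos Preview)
Your proposal is correct and follows essentially the same approach as the paper: apply Lemma~\ref{reconfiguringpathsoflength3} to name $w,z,x,s$ and split on whether $w=z$; in the main case establish a baseline of $1.0$ from $w,x,z,s$ and then invoke Lemma~\ref{pathoflength3kempe} to extract the missing $0.2$. The paper organises the degenerate cases slightly differently---it splits first on whether $w$ is adjacent to $x$ and/or $s$ (your $x'x'',x'z''$ edges), which automatically forces $x_{w,x},x_{w,s}\notin\{w,x,z,s\}$ via the colouring in the proof of Lemma~\ref{reconfiguringpathsoflength3}---whereas you defer all overlaps to a ``coincidence check''. One small omission: your coincidence list $\{z,s,x_{w,x},x_{w,s}\}$ should also include $x$, since $x_{w,s}=x$ is possible (it corresponds exactly to $wx\in E(G)$); but as you anticipated, this overlap only adds edges among degree-$\ge 4$ vertices and pushes the total well above $1.2$, so it causes no trouble. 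Your observation that the $\deg(x_{w,x})\ge 4$ subcase is tight with no slack matches the paper's closing remark that the length-$2$ path is the bottleneck of the whole argument.
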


\begin{proof}
Let $P= x,y,z$ be the path of length $2$ in the Gallai Tree. Let $x',x'',y',z',z''$ be the vertices adjacent to $x,y,z$ respectively not on $P$. Then by Lemma \ref{reconfiguringpathsoflength3} up to relabelling the vertices, we have that $y'$ is adjacent to $x''$ and $z''$, and $x' = z'$.  If $x' = y'$, then $\deg(y') \geq 5$, and $\deg_{3}(y') \leq \deg(y')-2$, and hence $\frac{\deg(y') -3.4}{\deg_{3}(y')} \geq \frac{8}{15}$. Thus $\text{ch}(P) \geq 9 + 3(\frac{8}{15}) + .4 = 11 \geq 10.2$, and hence $P$ is safe in this case. 

Therefore $x' \neq y'$. If $x'$ is adjacent to both $x''$ and $z''$, then for any $t \in \{x',x'',y',z''\}$, we have $\deg(t) \geq 4$ and $\deg_{3}(t) \leq \deg(t) -2$. Therefore $\text{ch}(P) \geq 9 + 5(.3) = 10.5 \geq 10.2$, and $P$ is safe in this case. 

Therefore $x'$ is adjacent to at most one of $x''$ or $z''$. Suppose $x'$ is not adjacent to $z''$ but is adjacent to $x''$. Then there is a vertex not in $P$, $x_{x',z''}$ which is adjacent to both $x'$ and $z''$. Then for $t \in \{y,x''\}$, we have $\deg(t) \geq 4$ and $\deg_{3}(t) \leq \deg(t) -2$. For $w \in \{x',z''\}$, we have $\deg(w) \geq 4$ and $\deg_{3}(t) \leq \deg(t) -1$. Hence we have $\text{ch}(P) \geq 9 + 1.2 = 10.2$ and hence $P$ is safe. 

 Thus by symmetry we may assume that $x'$ is not adjacent to either $x''$ or $z''$. Then there are distinct vertices not in $P$, say $x_{x',x''}$ and $x_{x',z''}$, which are adjacent to $x'$ and $x''$, and $x'$ and $z''$ respectively (and further $x_{x',x''}, x_{x',z''} \not \in \{x'',y',z'',x'\}$). 
 
 If $\deg(x') \geq 5$, then observe that 
$\text{ch}(P) \geq 9 + .64 + .4 + .3 = 10.34 > 10.2$ and hence $P$ is safe. 

If one of $x''$ or $z''$ has degree $5$, then we have that
$\text{ch}(P) \geq 9 + .4 + .3 + 2(.15) + .2 = 10.2$, and hence $P$ is safe. 

If $\deg(y) \geq 5$, then $\text{ch}(P) \geq 9 + .2 + .3 + .2 + \frac{8}{15} \geq 10.23 > 10.2$.

If either of $\deg(x_{x',z''}) \geq 4$ or $\deg(x_{x',x''}) \geq 4$, then again $\text{ch}(P) \geq 9 + 2(.3) + 3(.2) = 10.2$, and hence $P$ is safe. 

By Lemma \ref{pathoflength3kempe}, at least one of the above cases occurs, and hence $P$ is safe. 
\end{proof}

\begin{prop}
Let $P$ be isomorphic to a path of length $3$ in the Gallai Tree. Then $P$ is safe.
\end{prop}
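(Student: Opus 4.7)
The plan is to apply Corollary \ref{alternatingpath} and Lemma \ref{pathoflength4kempe} to the length-$3$ path $P = x_1 x_2 x_3 x_4$ (all four vertices of degree $3$) and then verify, by a short direct computation, that the charge received by $P$ from its off-path neighbors is at least $13.6 - 12 = 1.6$. By Corollary \ref{alternatingpath}, after relabelling the off-path structure can be described as follows: $x_1$ has off-path neighbors $x_1', u$; $x_4$ has off-path neighbors $v, x_4'$; crucially $u$ is also the unique off-path neighbor of $x_3$ and $v$ is the unique off-path neighbor of $x_2$; one has $x_1' v, u x_4' \in E(G)$; and whenever $x_1' u \notin E(G)$ (respectively $x_4' v \notin E(G)$) the vertex $x_{x_1', u}$ (respectively $x_{x_4', v}$) of Lemma \ref{pathoflength4kempe} exists. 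Since $P$ is a full component of the Gallai Tree, none of $x_1', u, v, x_4'$ has degree $3$, so all four send charge to $P$.

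Next I would write the charge received by $P$ as
\[
\frac{\deg(x_1') - 3.4}{\deg_3(x_1')} + \frac{\deg(x_4') - 3.4}{\deg_3(x_4')} + 2 \cdot \frac{\deg(u) - 3.4}{\deg_3(u)} + 2 \cdot \frac{\deg(v) - 3.4}{\deg_3(v)},
\]
where the factors of $2$ reflect that $u$ and $v$ each touch two path vertices, while $x_1', x_4'$ each touch only one. Using only that $v \in N(x_1')$ and $u \in N(x_4')$ are non-degree-$3$ neighbors, a baseline bound at $\deg = 4$ everywhere yields $0.2 + 0.2 + 0.4 + 0.4 = 1.2$, falling $0.4$ short of the target.

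The missing $0.4$ is supplied by Lemma \ref{pathoflength4kempe}. In case (a), $\deg(u) \geq 5$ (say) boosts $u$'s contribution from $0.4$ to at least $2 \cdot \tfrac{1.6}{4} = 0.8$, since $\deg_3(u) \leq \deg(u) - 1 = 4$. In case (c), both $x_1', x_4'$ have degree $\geq 5$, each contributing $\geq \tfrac{1.6}{4} = 0.4$, so the four terms sum to exactly $1.6$. In cases (b) and (d), a degree-$\geq 4$ second-layer vertex $x_{x_1', u}$ or $x_{x_4', v}$ simultaneously provides an extra non-degree-$3$ neighbor to both $x_1'$ (or $x_4'$) and $u$ (or $v$), tightening the respective $\deg_3$ denominators (to at most $\deg - 2$) enough to clear $1.6$ comfortably.

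The main obstacle is tightness: the target is attained in case (c), so the bookkeeping must be done carefully. A secondary technical issue is the degenerate configurations in which Lemma \ref{pathoflength4kempe}'s hypothesis literally fails (e.g.\ when $x_1' u \in E(G)$ directly, so that $x_{x_1', u}$ is not needed, or when some of $x_1', u, v, x_4', x_{x_1', u}, x_{x_4', v}$ coincide). In each such case one of the senders automatically acquires an additional non-degree-$3$ neighbor, so the baseline estimate already improves past $1.6$ without appealing to Lemma \ref{pathoflength4kempe}.
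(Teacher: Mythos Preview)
Your proposal is correct and follows essentially the same approach as the paper: apply Corollary~\ref{alternatingpath} to pin down the off-path structure around $P=x_1x_2x_3x_4$, dispose of the degenerate configurations (such as $u=v$ or $x_1'u\in E(G)$) directly, and then run through the outcomes of Lemma~\ref{pathoflength4kempe} to verify $\mathrm{ch}(P)\ge 13.6$ in each case. The only cosmetic difference is that you organize the computation as ``baseline $1.2$ plus a deficit of $0.4$ recovered via the lemma,'' whereas the paper simply tabulates each case separately; the numerics and the tight case (c) coincide.
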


\begin{proof}
Let $P = x_{1},x_{2},x_{3},x_{4}$. By Corollary \ref{alternatingpath}, up to relabelling the vertices, there are vertices $x_{1}',x_{4}'$ adjacent to $x_{1}$ and $x_{4}$ respectively, vertices $u$ and $v$ (distinct from $x_{1}'$ and $x_{4}')$), such that $u$ is adjacent to $x_{4}',x_{3},x_{1}$, and $v$ is adjacent to $x_{1}',x_{2}$ and $x_{4}$. If $u = v$, then $\deg(u) \geq 6$, and in this case it follows that $\text{ch}(P) \geq 14.6 >13.6$ and hence $P$ is safe. Therefore we can assume that $u \neq v$. 

If $ux_{1}' \in E(G)$, then as $\deg_{3}(u) \leq \deg(u) -2$, and $\deg_{3}(x_{1}') \leq \deg(x_{1}')-2$, it follows that $\text{ch}(P) \geq 13.6$ and hence $P$ is safe. An analagous argument holds if $vx_{4}' \in E(G)$.

Hence we can assume that $ux_{1}' \not \in E(G)$ and $vx_{4}' \not \in E(G)$. Then there are distinct vertices $x_{u,x_{1}'}$ and $x_{v,x_{4}'}$ not in $P$ that are adjacent to $u$ and $x_{1}'$ and $v$ and $x_{4}'$ respectively. 

Therefore, we have a subgraph isomorphic to the graph in Lemma \ref{pathoflength4kempe}. If either $\deg(v) \geq 5$ or $\deg(u) \geq 5$, then $\text{ch}(P) \geq 14 >13.6$ and hence $P$ is safe.  

If $\deg(x_{1}') \geq 5$, and $\deg(x_{4}') \geq 5$, then $\text{ch}(P) \geq 12 + .4 + .8 + .4 =  13.6$ and hence $P$ is safe in this case. 

If $\deg(x_{1}') \geq 5$ and $\deg(x_{x'_{4},u}) \geq 4$ then $\text{ch}(P) \geq 12 + .4 + .4 +1.2 = 14 > 13.6$. 

If $\deg(x_{1}') \geq 5$ and $\deg(x_{x'_{1},v}) \geq 4$, then 
$\text{ch}(P) \geq 12 + 1.73 > 13.6$ and hence $P$ is safe in this case.

If $\deg(x_{x'_{1},v}) \geq 4$ and $\deg(x_{x'_{4},u}) \geq 4$, then $\text{ch}(P) \geq 12 + 6(.3) = 13.8 > 13.6$ and hence $P$ is safe in this case. 

One of these cases occur by Lemma \ref{pathoflength4kempe}, and hence we get that $P$ is safe in all cases. 
\end{proof}

\begin{prop}
Let $P$ be isomorphic to a path of length $4$ in the Gallai Tree. Then $P$ is safe.
\end{prop}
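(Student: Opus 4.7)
The plan is to follow the pattern of the preceding propositions for shorter Gallai-tree path components. Let $P = x_1, x_2, x_3, x_4, x_5$ be the component; we need $\text{ch}(P) \geq 3.4 \cdot 5 = 17$. Since the base charge equals $\sum_i \deg(x_i) = 15$, the external neighbours of $P$ must together contribute at least $2$ units of charge via the discharging rule.

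First I would invoke Corollary \ref{alternatingpath} on $P$ to identify the external structure: up to relabelling, there is a common off-path neighbour $v$ of $\{x_1, x_3, x_5\}$, a common off-path neighbour $u$ of $\{x_2, x_4\}$, additional off-path neighbours $x_1', x_5'$ of the endpoints, and for each pair $(p, q) \in \{x_1', x_5'\} \times \{u, v\}$ either a direct edge $pq$ or a distinct common-neighbour vertex $x_{p,q}$ not on $P$. If any of the four pairs is a direct edge, the resulting configuration has strictly more external edges and only improves the discharging, so the critical case is exactly the subgraph $H$ described by Lemma \ref{pathoflength5kempe}. I would also record the unconditional bounds $\deg(u) \geq 4$, $\deg(v) \geq 5$, and $\deg(x_1'), \deg(x_5') \geq 4$: the latter holds because $x_1', x_5'$ are not on $P$ and cannot have degree $3$, lest they extend the Gallai-tree component containing $x_1$ or $x_5$.

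Next I would split into the three cases guaranteed by Lemma \ref{pathoflength5kempe} and compute the incoming charge $\sum_t S(t)$, where for an external vertex $t$ we set $S(t) = |N(t) \cap V(P)| \cdot \frac{\deg(t) - 3.4}{\deg_3(t)}$. A key observation is that $\deg_3(u) \leq \deg(u) - 2$ (both $x_1'$ and $x_5'$ are non-degree-$3$ neighbours of $u$), and that a helper vertex $x_{p,q}$ of degree $\geq 4$ removes itself from both $\deg_3(v)$ and $\deg_3(p)$. In case (i), where some $t \in \{u, x_1', x_5'\}$ has $\deg(t) \geq 5$, a single contribution like $S(u) \geq 2 \cdot \frac{1.6}{3} > 1$ or $S(x_1') \geq \frac{1.6}{4} = 0.4$ suffices on top of the baselines $S(v) \geq \frac{4.8}{5}$, $S(u) \geq 0.6$, $S(x_1'), S(x_5') \geq 0.2$. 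In case (ii), a helper vertex of degree $\geq 4$ raises $S(v)$ to $\geq 3 \cdot \frac{1.6}{4} = 1.2$ and the relevant $S(x_j')$ to $\geq 0.3$. In case (iii), $\deg(v) \geq 6$ gives $S(v) \geq 3 \cdot \frac{2.6}{6} = 1.3$. In each case the total meets or exceeds $2$, establishing $\text{ch}(P) \geq 17$.

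The main obstacle is the tight bookkeeping near the borderline configuration $\deg(v) = 5$ with $u, x_1', x_5'$ at degree exactly $4$ and both helper vertices at degree $3$: the baseline contributions then sum to only $0.96 + 0.6 + 0.2 + 0.2 = 1.96$, just short of the required $2$. We rely on the precise statement of Lemma \ref{pathoflength5kempe}---in particular, the separate listing of $\deg(v) \geq 6$ and of the helper-vertex condition---to rule out this borderline case and force at least one of the stronger conditions, closing the $0.04$ gap. A secondary obstacle is verifying that every alternate configuration, in which one of the pairs $pq$ is a direct edge, is handled by an even easier computation because each such edge directly lifts $\deg(x_1')$, $\deg(x_5')$, or $\deg(u)$ by one.
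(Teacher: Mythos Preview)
Your plan is the paper's: apply Corollary~\ref{alternatingpath}, reduce to the configuration of Lemma~\ref{pathoflength5kempe}, and verify $\text{ch}(P)\ge 17$ in each of that lemma's outcomes. Two places need actual work rather than a hand-wave. You omit the case $u=v$ (trivially $\deg(u)\ge 7$, so one line suffices). More importantly, the assertion that a direct edge $x_j'v$ ``only improves the discharging'' is false as stated: if $x_1'v\in E(G)$ and $x_1'$ happens to serve as the helper for $(x_5',v)$ via $x_1'x_5'\in E(G)$, then $v$ can have degree only $4$, and $S(v)$ drops from the baseline $0.96$ to $0.6$. The total still reaches $2$---because now $\deg_3(x_1')\le 1$, pushing $S(x_1')$ up to $0.6$---but that is precisely the explicit computation the paper carries out, not a monotonicity observation. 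Note also that $ux_1',ux_5'\in E(G)$ are already forced by Corollary~\ref{alternatingpath} (the bipartition class containing $x_1',x_5'$ is covered by $u$); this is where your bound $\deg_3(u)\le\deg(u)-2$ comes from, so only two of your ``four pairs'' are actually in play.
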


\begin{proof}
Let $P = x_{1},x_{2},x_{3},x_{4},x_{5}$. We apply Corollary \ref{alternatingpath} to $P$. Then from Corollary \ref{alternatingpath} there are vertices $x_{1}'$ and $x_{5}'$ not in $P$ where $x_{1}'$ is adjacent to $x_{1}$ and $x_{5}'$ is adjacent to $x_{5}$ and given a bipartition $(A,B)$ of $P \cup \{x_{1}',x_{5}'\}$, there are vertices $u$ and $v$ so that $u$ is adjacent to all vertices in $A$ and $v$ is adjacent to all vertices in $B$. If $u = v$, then $\deg(u) \geq 7$, and hence $\text{ch}(P) \geq 15 + 5(.72) +2(.2) = 19 > 17$. 

Therefore $u \neq v$. Without loss of generality we can assume that $x_{1}' \in A$, and hence $x_{5}' \in A$. First suppose that $x_{1}'$ is adjacent to $v$. Then either $\deg(v) \geq 5$, or $x_{1}'x_{5}' \in E(G)$. If $x_{1}'x_{5}' \in E(G)$, then $\text{ch}(P) \geq 15 + 3(.2) + .6 + 2(.3) + .3 = 17.1 > 17$ and hence $P$ is safe in this case. If $\deg(v) \geq 5$, then $\text{ch}(P) \geq 15 + 3(.4) + .3 + .2 + 2(.3) =17.3 >17$ and hence $P$ is safe in this case. 

Therefore we can assume that $v$ is not adjacent to $x_{1}'$, and by symmetry we can assume that $v$ is not adjacent to $x_{5}'$. Therefore there are vertices $x_{x_{1}',v}$ and $x_{x_{5}',v}$ not on $P$ which are adjacent to $x_{1}'$ and $v$, and $x_{5}'$ and $v$ respectively. 

If $\deg(v) \geq 6$, then $\text{ch}(P) \geq 15 + 4(\frac{13}{30}) + .4 + .6 \geq 17 + \frac{11}{15}$, and hence $P$ is safe in this case. 

If $\deg(u) \geq 5$, then $\text{ch}(P) \geq 15 + 2(.53) + 3(.32) + .4 = 17.42 > 17$. Thus $P$ is safe in this case. 

Therefore we have a subgraph as in Lemma \ref{pathoflength5kempe}. If $\deg(x_{x_{1}',v}) \geq 4$ then $\text{ch}(P) \geq 15 + 3(.32) + .9 + .2 = 17.06 >17$, and hence $P$ is safe in this case. Similarly if $\deg(x_{x_{5}',v}) \geq 5$, then $P$ is safe. If $\deg(x_{1}') \geq 5$, then $\text{ch}(P) \geq 3(.32) + .4 + .6 + .2 = 17.16 > 17$, and hence $P$ is safe in this case. Similarly, if $\deg(x_{5}') \geq 5$ then $P$ is safe. 
\end{proof}

\begin{prop}
All path components in the Gallai Tree of length at least $5$ are safe. 
\end{prop}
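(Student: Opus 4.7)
The plan is to apply Corollary \ref{alternatingpath} to $P = v_0, v_1, \ldots, v_\ell$, where $\ell \geq 5$ is the length of $P$. This produces hub vertices $u := v_1'$ and $v := w'$, endpoint-extras $w \in \{v_0', v_0''\}$ and $t \in \{v_\ell', v_\ell''\}$, and a bipartition $(A, B)$ of $P \cup \{w, t\}$ with $v_0 \in A, v_1 \in B$, such that $u$ is adjacent to every vertex of $B$ and $v$ to every vertex of $A$. Each interior $v_i$ has only one external neighbour, which must be $u$ or $v$ (whichever is already forced to be adjacent to $v_i$); each endpoint has two external neighbours drawn from $\{u, v, w, t\}$. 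Hence the external neighbourhood of $P$ equals $\{u, v, w, t\}$. Because every component of the Gallai tree is a path, none of $u, v, w, t$ can itself have degree $3$, so each has degree at least $4$.

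If $u = v$, then $u$ is adjacent to all of $P \cup \{w, t\}$, so $\deg(u) \geq \ell + 3$, and since $w, t$ are non-degree-$3$ neighbours, $\deg_3(u) \leq \deg(u) - 2$. The charge that $u$ alone sends to $P$ is at least $(\ell + 1) \cdot \frac{\ell - 0.4}{\ell + 1} = \ell - 0.4$, which exceeds $0.4(\ell + 1)$ for all $\ell \geq 2$, so $P$ is safe.

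If $u \neq v$, I use the further clause of Corollary \ref{alternatingpath}. For $\ell$ odd the non-automatic pairs $(w, v)$ and $(t, u)$ each force either a direct edge or a common neighbour off $P$, so $\deg(u), \deg(v) \geq \frac{\ell + 5}{2}$. For $\ell$ even, $w, t \in B$, so the non-automatic pairs are $(w, v)$ and $(t, v)$, giving $\deg(u) \geq \frac{\ell + 4}{2}$ and $\deg(v) \geq \frac{\ell + 6}{2}$. In both parities every $y \in \{u, v, w, t\}$ has at least one non-degree-$3$ neighbour among the other three (e.g.\ $u$ is adjacent to the non-degree-$3$ vertex $w$ since $w \in B$), so $\deg_3(y) \leq \deg(y) - 1$ and the per-edge charge is at least $\frac{\deg(y) - 3.4}{\deg(y) - 1}$. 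Summing over the $\ell + 3$ edges from $P$ to $\{u, v, w, t\}$ — of which $u$ contributes $\lceil\ell/2\rceil$ or $\lfloor\ell/2\rfloor$, $v$ contributes the rest among interior and even-endpoint cases, and $w, t$ each contribute one — reduces safety of $P$ to the elementary inequality $3\ell^2 - 10\ell - 9 \geq 0$ in the odd case (whose positive root is about $4.07$, so it holds for all $\ell \geq 5$), and an analogous but easier inequality in the even case.

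The main obstacle is the tightness at the boundary $\ell = 5$: the crude bound $\deg_3(y) \leq \deg(y)$ at the hubs yields only $2 \cdot 3 \cdot \tfrac{1.6}{5} + 2 \cdot 0.15 = 2.22$ charge, short of the required $2.4 = 0.4(\ell + 1)$. One must invoke the fact that $w$ (forced to be non-degree-$3$ by the path-components hypothesis) is a neighbour of $u$, sharpening the per-edge contribution from $\tfrac{1.6}{5} = 0.32$ to $\tfrac{1.6}{4} = 0.4$ and closing the gap. For all $\ell > 5$, the degrees of $u$ and $v$ grow linearly in $\ell$ while the target $0.4(\ell + 1)$ grows with a smaller coefficient, so monotonicity handles every larger path uniformly.
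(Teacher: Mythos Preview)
Your approach is essentially the paper's: apply Corollary~\ref{alternatingpath}, dispose of the case $u=v$, and in the generic case bound the hub degrees and discharge. The paper then splits on whether both hubs have degree $\geq 6$ (reducing to $\ell\in\{5,6\}$) while you parametrize uniformly by $\ell$, but either way the crux is $\ell=5$, where both arguments rest on the claim that each hub has degree at least $5$.

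There is a gap in your derivation of that degree bound. You assert that the further clause of Corollary~\ref{alternatingpath} applied to the non-automatic pair $(t,u)$ (odd case) forces $\deg(u)\ge |B|+1=\tfrac{\ell+5}{2}$, since it produces either the edge $tu$ or a common neighbour $x_{t,u}$ off $P$. But ``off $P$'' does not mean ``outside $B$'': the only off-$P$ vertex of $B$ is $w$, and nothing rules out $x_{t,u}=w$, which occurs precisely when $wt\in E(G)$. In that event the purported extra neighbour is already counted and one obtains only $\deg(u)\ge |B|=\tfrac{\ell+3}{2}$, and symmetrically for $v$. For $\ell=5$ this gives merely $\deg(u),\deg(v)\ge 4$, so the per-edge hub charge drops from your $0.4$ to $\tfrac{0.6}{3}=0.2$; the hubs then send only $1.2$ to $P$, and even granting $\deg_3(w),\deg_3(t)\le\deg-2$ from the edge $wt$, the endpoints add only $0.6$, totalling $1.8<2.4$. (The paper's one-line justification ``As $P$ has length at least five, the degree of $x$ and $y$ is at least $5$'' glosses over the same point, so you are in good company; but the argument as written is incomplete.) A related slip appears in your even-case claim that every $y\in\{u,v,w,t\}$ has a non-degree-$3$ neighbour among the other three: for $\ell$ even all of $w,t$ lie in $B$ along with the vertices adjacent to $u$, while $v$ is adjacent only to $A$, so $v$ need not be adjacent to any of $u,w,t$.
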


\begin{proof}
Let $P$ be a path of length at least $5$ in the Gallai Tree, with endpoints $u$ and $v$. We apply Corollary \ref{alternatingpath}. Then there are vertices $u'$ and $v'$ adjacent to $u$ and $v$ respectively such that for a bipartition $(A,B)$ of $P \cup \{u',v'\}$,  there are vertices $x$ and $y$ such that $x$ is adjacent to all vertices in $A$ and $y$ is adjacent to all vertices in $B$. Further, for any $w \in \{u',v'\}$ and $t \in \{x,y\}$, if $wt \not \in E(G)$, then there is a vertex $x_{w,t}$ which is not in $P$, and adjacent to both $w$ and $t$. As $P$ has length at least five, the degree of $x$ and $y$ is at least $5$.

 Observe as the length of the path is at least five, this implies that the degree of $x$ and $y$ is at least five.  If $x = y$, then $\deg(x) \geq 8$, $\frac{\deg(x) -3.4}{\deg_{3}(x)} \geq \frac{23}{30}$, and hence $\text{ch}(P) \geq 3v(P) + \frac{23}{30}v(P) > 3.4v(P)$. It follows that $P$ is safe. 
 
 Thus $x \neq y$. If $x$ and $y$ have degree at least $6$, then for $t \in \{x,y\}$, we have $\frac{\deg(t)-3.4}{\deg_{3}(t)} \geq \frac{13}{30}$. Thus $\text{ch}(P) \geq 3v(P) + \frac{13}{30}v(P) > 3.4v(P)$ and it follows that $P$ is safe. 
 
 Thus at least one of $x$ or $y$ has degree $5$. Thus the length of $P$ is at most $6$. If the length of $P$ is exactly $6$, then the vertex of degree $5$ is adjacent to both $u'$ and $v'$, and thus is adjacent to at most $3$ vertices of degree three. In this case we have that $\text{ch}(P) \geq 3v(P) + 3(\frac{8}{15}) + 4(\frac{13}{30}) > 3.4v(P)$. 
 
  Thus the last case to consider is when the length of $P$ is exactly $5$. Observe that either the degree of $x$ and $y$ is greater than $6$, or it is $5$ and the number of neighbours of degree $3$ is at most $4$ (since $x$ is adjacent to at least one of $u'$ and $v'$, and $y$ is adjacent to at least one of $u'$ and $v'$). As $\frac{5-3.4}{4} = .4$, it again follows that $P$ is safe. 
\end{proof}

Thus every component of the Gallai Tree is safe, and Theorem \ref{maintheorem} follows. We remark that certain aspects of this proof are easily improvable, but the bottleneck occurs in dealing components of the Gallai Tree which are isomorphic to a path of length $2$, and it is unclear to me how to improve the bound significantly in this case.  It would be nice to be able to use the potential method instead of the above techinques to obtain a better bound, but I did not see how to do so effectively.

\bibliographystyle{plain}
\bibliography{Potentialbib}

\end{document}